\documentclass{amsart}

\usepackage{amsmath, amssymb, amsthm, mathrsfs, graphicx, combelow}
\usepackage[backref=page]{hyperref}
\usepackage{enumitem, xspace, ifthen, comment}
\usepackage[all]{xy}
\usepackage{tikz}
\usetikzlibrary{arrows,shapes,trees}


\newtheorem*{thm-plain}{Theorem}
\newtheorem{thm}{Theorem}[section]
\newtheorem{lem}[thm]{Lemma}
\newtheorem{prp}[thm]{Proposition}
\newtheorem{cor}[thm]{Corollary}

\newtheorem{conj}[thm]{Conjecture}
\newtheorem{ques}[thm]{Question}
\numberwithin{equation}{thm}

\theoremstyle{definition}
\newtheorem{dfn}[thm]{Definition}

\newtheorem*{dfn-plain}{Definition}

\theoremstyle{remark}
\newtheorem{clm}[thm]{Claim}

\newtheorem{awlog}[thm]{Additional Assumption}

\newtheorem{rem}[thm]{Remark}

\newtheorem{exm}[thm]{Example}
\newtheorem*{rem-plain}{Remark}

\DeclareMathOperator{\Spec}{Spec}
\DeclareMathOperator{\trdeg}{trdeg}
\DeclareMathOperator{\Aut}{Aut}
\DeclareMathOperator{\Autn}{Aut^\circ}
\DeclareMathOperator{\Pic}{Pic}
\DeclareMathOperator{\codim}{codim}

\def\rd#1.{\lfloor{#1}\rfloor}
\def\rp#1.{\lceil{#1}\rceil}

\def\tw#1.{\langle{#1}\rangle}

\newcommand{\imp}{\Rightarrow}
\newcommand{\lto}{\longrightarrow}

\newcommand{\N}{\mathbb N}
\newcommand{\Z}{\mathbb Z}
\newcommand{\Q}{\ensuremath{\mathbb Q}}
\newcommand{\R}{\mathbb R}
\newcommand{\C}{\mathbb C}

\renewcommand{\P}{\mathbb P}
\renewcommand{\O}{\mathscr O}

\newcommand{\x}{\times}

\renewcommand{\i}{\mathrm{i}}
\renewcommand{\phi}{\varphi}
\renewcommand{\theta}{\vartheta}
\newcommand{\id}{\mathrm{id}}

\newcommand{\minus}{\setminus}
\newcommand{\inj}{\hookrightarrow}
\newcommand{\surj}{\twoheadrightarrow}
\newcommand{\bij}{\xrightarrow{\;\sim\;}}
\newcommand{\isom}{\cong}
\newcommand{\mf}{\mathfrak}

\newcommand{\tensor}{\otimes}

\DeclareMathOperator{\Hom}{\mathrm{Hom}}
\DeclareMathOperator{\Isom}{\mathrm{Isom}}
\DeclareMathOperator{\sHom}{\mathscr H\!om}
\DeclareMathOperator{\Ext}{Ext}
\DeclareMathOperator{\sExt}{\mathscr E\!xt}
\newcommand{\sg}{\mathrm{sg}}
\newcommand{\sm}{\mathrm{sm}}

\newcommand{\an}{\mathit{an}}

\DeclareMathOperator{\pr}{pr}

\newcommand{\cy}{Calabi--Yau\xspace}
\newcommand{\kahler}{K\"ahler\xspace}
\DeclareMathOperator{\Alb}{Alb}
\newcommand{\alb}{\mathrm{alb}}
\DeclareMathOperator{\Def}{Def}
\DeclareMathOperator{\Deflt}{Def^{lt}}
\DeclareMathOperator{\tr}{tr}
\DeclareMathOperator{\Der}{Der_\C}

\newcommand{\sA}{\mathscr{A}}

\newcommand{\sC}{\mathscr{C}}
\newcommand{\cC}{\mathcal{C}}

\newcommand{\sF}{\mathscr{F}}

\newcommand{\cH}{\mathcal{H}}

\newcommand{\cK}{\mathcal{K}}

\newcommand{\sM}{\mathscr{M}}

\newcommand{\sT}{\mathscr{T}}

\newcommand{\cU}{\mathcal{U}}

\newcommand{\cV}{\mathcal{V}}

\newcommand{\sX}{\mathscr{X}}
\newcommand{\sY}{\mathscr{Y}}

\newdir{ ir}{{}*!/-5pt/@^{(}} 
\newdir{ il}{{}*!/-5pt/@_{(}} 

\DeclareMathOperator{\supp}{supp}

\newcommand{\eps}{\varepsilon}

\renewcommand{\d}{\mathrm d}
\newcommand{\del}{\partial}
\newcommand{\wt}{\widetilde}
\newcommand{\wh}{\widehat}

\DeclareMathOperator{\rk}{rk}

\hyphenation{Hart-shor-ne}
\hyphenation{Gro-then-dieck}
\hyphenation{mani-fold}

\setcounter{tocdepth}{1}

\newenvironment{sequation}{%
\setcounter{equation}{\value{thm}}%
\numberwithin{equation}{section}%
\begin{equation}%
}{%
\end{equation}%
\numberwithin{equation}{thm}%
\addtocounter{thm}{1}%
}

\numberwithin{equation}{thm}

\DeclareRobustCommand{\SkipTocEntry}[5]{}

\setitemize[1]{leftmargin=*,parsep=0em,itemsep=0.125em,topsep=0.125em}
\setenumerate[1]{leftmargin=*,parsep=0em,itemsep=0.125em,topsep=0.125em}

\newcommand{\iref}[3]{\the\value{#1}.\the\value{#2}(\the\value{#3})}

\newcommand\factor[2]{\left. \raise 2pt\hbox{$#1$} \right/\hskip -2pt \raise -2pt\hbox{$#2$}}


\definecolor{forrest}{RGB}{81,133,49}
\definecolor{mydarkblue}{RGB}{10,92,153}


\newcommand{\PreprintAndPublication}[2]{#1}


\title[Algebraic approximation of K\"ahler threefolds with $\kappa = 0$]{Algebraic approximation of K\"ahler threefolds of Kodaira dimension zero}

\dedicatory{To sing, to brag, to blaze, to grumble \dots}

\author{Patrick Graf}
\address{Lehrstuhl f\"ur Mathematik I, Universit\"at Bayreuth, 95440 Bayreuth, Germany}
\email{\href{mailto:patrick.graf@uni-bayreuth.de}{patrick.graf@uni-bayreuth.de}}
\urladdr{\href{http://www.pgraf.uni-bayreuth.de/en/}{www.graficland.uni-bayreuth.de}}

\date{January 17, 2018}
\thanks{The author was partially supported by the DFG grant ``Zur Positivit\"at in der komplexen Geometrie''.} %
\keywords{K\"ahler manifolds, Calabi--Yau threefolds, algebraic approximation, small projective deformations, locally trivial deformations, K\"ahler groups, Albanese map} %
\subjclass[2010]{32J27, 32Q55, 14E30, 32G05}

\hypersetup{
  pdfauthor={Patrick Graf},
  pdftitle={Algebraic approximation of Kahler threefolds of Kodaira dimension zero},
  pdfkeywords={Kahler manifolds, Calabi-Yau threefolds, algebraic approximation, small projective deformations, locally trivial deformations, Kahler groups, Albanese map},
  pdfstartview={Fit},
  pdfpagelayout={OneColumn},
  pdfpagemode={UseNone},
  linktoc=all,
  breaklinks,
  linkcolor=[RGB]{0 0 96},
  citecolor=[RGB]{96 0 0},
  urlcolor=[RGB]{0 96 0},
  colorlinks}

\begin{document}

\begin{abstract}
We prove that for a compact K\"ahler threefold with canonical singularities and vanishing first Chern class, the projective fibres are dense in the semiuniversal deformation space.
This implies that every K\"ahler threefold of Kodaira dimension zero admits small projective deformations after a suitable bimeromorphic modification.
As a corollary, we see that the fundamental group of any K\"ahler threefold is a quotient of an extension of fundamental groups of projective manifolds, up to subgroups of finite index.

In the course of the proof, we show that for a canonical threefold with $c_1 = 0$, the Albanese map decomposes as a product after a finite \'etale base change.
This generalizes a result of Kawamata, valid in all dimensions, to the K\"ahler case.
Furthermore we generalize a Hodge-theoretic criterion for algebraic approximability, due to Green and Voisin, to quotients of a manifold by a finite group.
\end{abstract}

\maketitle

\begingroup
\hypersetup{linkcolor=black}
\tableofcontents
\endgroup

\section{Introduction}

A fundamental problem in complex algebraic and \kahler geometry is to determine the relationship between smooth projective varieties and compact \kahler manifolds.
The famous Kodaira Embedding Theorem, combined with Chow's theorem, says that a compact complex manifold is projective if and only if it admits a \kahler form whose cohomology class is rational.
This suggests the following question, generally attributed to Kodaira.

\begin{ques}[Kodaira problem] \label{ques:Kodaira}
Is it possible to make any compact \kahler manifold $X$ projective by an arbitrarily small deformation $X_t$ of its complex structure?
\end{ques}

Such a deformation will be called an \emph{algebraic approximation} of $X$ (see Definition~\ref{dfn:alg approx}).

Using his classification of surfaces, Kodaira proved that every compact \kahler surface can be deformed to an algebraic surface~\cite[Thm.~16.1]{Kod63}.
More recently Buchdahl gave a classification-free proof, showing that indeed the deformation can be taken arbitrarily small~\cite{Buc06, Buc08}.
In higher dimensions, the Kodaira problem remained open until in 2003 Voisin \cite{Voi04} gave counterexamples in any dimension $\ge 4$. Her examples are bimeromorphic to a torus, which does in fact have an algebraic approximation.
But in 2004 Voisin~\cite{Voi06} even constructed examples (in any even dimension $\ge 10$) of compact \kahler manifolds $X$ such that no compact complex manifold $X'$ bimeromorphic to $X$ admits an algebraic approximation.

At first sight, this seems to provide a most definite negative answer to the Kodaira problem. However, from the viewpoint of the Minimal Model Program (MMP), it is more natural to ask about algebraic approximation for minimal models of compact \kahler manifolds. The point is that these spaces will in general be singular, albeit mildly.
To be more precise, we say that a normal compact \kahler space $X$ is \emph{minimal} if it is \Q-factorial with at worst terminal singularities, and $K_X$ is nef.
(See Sections~\ref{sec:kahler spaces} and~\ref{sec:MMP sings} for the notion of singular \kahler space and the other terms used in this definition.)
The following version of the Kodaira problem for minimal spaces was proposed by Thomas Peternell.

\begin{conj}[Peternell] \label{conj:Peternell}
Any minimal \kahler space admits an algebraic approximation.
In particular, assuming the \kahler MMP works, every non-uniruled compact \kahler manifold $X$ is bimeromorphic to a (mildly singular) compact \kahler space $X'$ which admits an algebraic approximation.
\end{conj}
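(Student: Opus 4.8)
The full conjecture, for an arbitrary minimal \kahler space in all dimensions, is out of reach with present techniques, so I will describe the strategy for the principal accessible case, namely \kahler threefolds of Kodaira dimension zero, where the structure theory is strong enough to run a Hodge-theoretic argument. The plan is first to reduce the second assertion to the first: running the (threefold) \kahler \mmp on a non-uniruled $X$ produces a minimal model $X'$, a mildly singular \kahler space, so it suffices to approximate $X'$. I would therefore concentrate on exhibiting projective fibres, densely, in the semiuniversal deformation space of a minimal threefold $X'$. By abundance for threefolds, $\kappa(X')=0$ forces $K_{X'}$ to be torsion, hence $c_1(X')=0$; this makes available a finite quasi-\'etale cover $\wt X\to X'$, with deck group $G$, on which the canonical class is trivial and a Beauville--Bogomolov type splitting holds. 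The aim is then to approximate $\wt X$ in a $G$-equivariant fashion and descend to $X'$.

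Second, I would exploit the structure of these spaces. A key structural input, to be established along the way, is that the Albanese map $\alb\colon X'\to\Alb(X')$ is a surjective analytic fibre bundle with connected fibre, generalizing Kawamata's projective theorem to the \kahler setting; this organizes $X'$ over a torus base with fibre a lower-dimensional canonical space with $c_1=0$. Combined with the splitting over $\wt X$, the problem reduces in dimension three to three kinds of building blocks: complex tori, surfaces of K3 or abelian type, and strict \cy threefolds.

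Third comes the Hodge-theoretic core. On each building block I would verify the Green--Voisin criterion for algebraic approximability, which yields density of the projective locus in the Kuranishi space once the derivative of the period map---cup product with the Kodaira--Spencer classes---is surjective onto the obstruction space $H^{0,2}$. For \cy threefolds this is automatic because $h^{2,0}=0$, so every small deformation keeps a \kahler class of type $(1,1)$; for tori and for K3 or abelian surfaces it follows from surjectivity of the period map, which makes the rational classes of type $(1,1)$ accumulate. The new ingredient, forced by the finite cover, is an equivariant refinement: since $X'=\wt X/G$, I would prove a version of the Green--Voisin criterion on the $G$-invariant parts of cohomology, so that a $G$-equivariant algebraic approximation of $\wt X$ carrying a $G$-invariant rational \kahler class descends to an approximation of $X'$.

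The main obstacle is exactly this descent through the finite quotient, together with the singularities. One must: (i) carry out the deformation theory $G$-equivariantly and keep the quotient \kahler, identifying the Kuranishi space of $X'$ with the $G$-invariant part of that of $\wt X$; (ii) work throughout with \emph{locally trivial} deformations that preserve the canonical singularities, since the full deformation space of a singular space need not detect projectivity correctly; and (iii) deform the fibre-bundle structure globally rather than merely fibrewise. I expect the equivariant Hodge-theoretic step and the singular, locally trivial deformation theory to be the crux. Going beyond threefolds or beyond $\kappa=0$ would in addition require the full \kahler \mmp and abundance, which is why the general conjecture remains open.
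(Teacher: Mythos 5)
Your skeleton matches the paper's strategy for the accessible case almost exactly: reduce the second assertion to the first via the \kahler MMP, use abundance to make $K_{X'}$ torsion and hence $c_1(X')=0$, pass to the index one cover $\wt X \to X'$ with deck group $G$, prove a $G$-equivariant refinement of the Green--Voisin criterion and descend through the quotient using \emph{locally trivial} deformations, and feed in the Albanese fibre-bundle theorem as the key structural input. These are precisely Theorems~\ref{thm:approx crit quotient intro} and~\ref{thm:alb cy3 intro} and the proof of~(\ref{thm:approx intro}.\ref{itm:approx intro.2}), so on those points you have reconstructed the argument.

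The genuine gap is in how you propose to handle the cover $\wt X$. You invoke a ``Beauville--Bogomolov type splitting'' and a reduction to building blocks (tori, K3 or abelian surfaces, strict \cy threefolds), checking Green--Voisin case by case. Two problems. First, at that stage $\wt X$ is a priori a \emph{singular, non-projective} compact \kahler space, and no decomposition theorem is available in that generality (the known versions require smoothness, or projectivity with klt singularities); this step has no proof. Second, even granting a splitting up to finite covers and bimeromorphic modifications, the paper explicitly warns that approximability is \emph{not} preserved under finite or bimeromorphic maps --- this is exactly why the structure theorem of Campana--H\"oring--Peternell, which already gives you the building blocks up to such maps, does not imply the result. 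The paper avoids all of this with one observation your plan is missing: by the Albanese fibre-bundle theorem together with Serre duality and the Kodaira embedding theorem, a \emph{non-projective} \cy threefold with isolated singularities and trivial canonical sheaf is automatically \emph{smooth} (Theorem~\ref{thm:cy3 K=0}, via Proposition~\ref{prp:q le 1}: a nonempty singular locus would be a union of fibres of the Albanese bundle, forcing $q(X)=0$, hence $h^2(X,\O_X)=h^1(X,\omega_X)=0$, hence projectivity). So the index one cover is either projective (and then so is $X'$) or smooth, and in the smooth case Bogomolov--Tian--Todorov unobstructedness plus the Hard Lefschetz theorem give surjectivity of $\alpha_{[\omega]}$ uniformly, with no case analysis. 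Replacing your splitting step by this dichotomy turns your outline into the paper's proof; as written, the splitting step would fail.
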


By ``assuming that the \kahler MMP works'' we mean that every non-uniruled compact \kahler manifold should be bimeromorphic to a minimal \kahler space.

\begin{rem-plain}[Surfaces]
Conjecture~\ref{conj:Peternell} fits in nicely with Kodaira's result about approximation of surfaces.
Namely, since terminal surface singularities are smooth and approximability is preserved under blowing up a smooth point, approximation of (non-ruled) surfaces follows from Conjecture~\ref{conj:Peternell}.
\end{rem-plain}

\begin{rem-plain}[Uniruled manifolds]
It is not quite obvious what conjecture to formulate for uniruled manifolds.
At least one cannot hope for approximability for the total space of any Mori fibration.
Indeed, let $X$ be a rigid non-projective compact \kahler manifold, as for example provided by~\cite{Voi04}. Then $X \x \P^1 \to X$ is a Mori fibre space, but $X \x \P^1$ does not admit an algebraic approximation since it is likewise rigid and non-projective.
\end{rem-plain}

\subsection{Main results}

The main result of this paper is the confirmation of Conjecture~\ref{conj:Peternell} for $X$ a threefold of Kodaira dimension zero.
In fact, we prove the following stronger result, which is not empty even if $X$ itself is already projective.
We refer to Section~\ref{sec:deformation theory} for the definition of the miniversal locally trivial deformation and of a strong algebraic approximation.

\begin{thm} \label{thm:bim approx intro}
Let $X$ be a compact \kahler manifold with $\dim X = 3$ and $\kappa(X) = 0$. Then there is a bimeromorphic modification $X \dashrightarrow X'$, where $X'$ is a normal compact \kahler space with \Q-factorial terminal singularities, such that the miniversal locally trivial deformation $\sX' \to \Deflt(X')$ of $X'$ is a strong algebraic approximation of $X'$.
\end{thm}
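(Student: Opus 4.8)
The plan is to produce $X'$ by running the \kahler \mmp and then to establish density of projective fibres in $\Deflt(X')$ by a Hodge-theoretic criterion, which I would verify after reducing to well-understood building blocks. First I would construct $X'$. Since $\kappa(X)=0$, the threefold $X$ is not uniruled, so the \kahler \mmp should yield a bimeromorphic model $X\dashrightarrow X'$ with $X'$ a \Q-factorial terminal compact \kahler space and $K_{X'}$ nef. As $\kappa$ is a bimeromorphic invariant of such mild models, $\kappa(X')=0$, and abundance in dimension three forces $K_{X'}$ to be torsion; in particular $c_1(X')=0$. This $X'$ is the desired model, and it remains to analyse its locally trivial deformations.

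Next I would study the structure of $X'$. Applying the \kahler analogue of Kawamata's theorem (announced in the abstract) to the Albanese map $\alb\colon X'\to A$, where $A=\Alb(X')$, I obtain that $\alb$ is a surjective analytic fibre bundle with connected fibre $F$, a possibly singular \kahler variety with $c_1=0$ of dimension $3-q$, where $q=\dim A=h^{1,0}(X')$. A case distinction on $q\in\{0,1,2,3\}$, combined with a Beauville--Bogomolov-type decomposition after a suitable finite quasi-\'etale cover, should exhibit $X'$ up to a finite group as assembled from complex tori, $K3$-type (holomorphic symplectic) surfaces, and strict \cy threefolds, the last of which have $h^{2,0}=0$.

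Then I would invoke the density criterion. Strong algebraic approximation amounts to density of the projective fibres in $\Deflt(X')$, whose tangent space is $H^1(X',\sT_{X'})$. The Green--Voisin criterion reduces this density to surjectivity of the cup product with a \kahler class,
\[
\cdot\,\omega\colon H^1(X',\sT_{X'})\lto H^2(X',\O_{X'}),
\]
for some \kahler class $\omega$; geometrically, surjectivity means one can deform $X'$ so as to drag a prescribed rational class, in particular a class near $\omega$, back into type $(1,1)$. I would check this block by block: it is vacuous when $H^2(\O)=0$ (strict \cy), it holds for tori because abelian varieties are dense among all tori, and it holds for $K3$-type surfaces by local surjectivity of the period map. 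Finally the quotient refinement of the Green--Voisin criterion, also announced in the abstract, lets me descend density through the finite group action from the cover back to $X'$ itself.

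The hard part will be the simultaneous treatment of singularities and the finite quotient. Concretely, I expect the main obstacles to be establishing the Albanese bundle structure and the Beauville--Bogomolov splitting in the \emph{singular} \kahler category, identifying $\Deflt(X')$ as the correct parameter space so that the cup-product criterion is both meaningful and computable on $H^1(X',\sT_{X'})$, and proving the equivariant version of the Green--Voisin criterion precisely enough that density of projective fibres descends from the building blocks to the quotient $X'$.
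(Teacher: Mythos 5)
Your first step (run the K\"ahler MMP of H\"oring--Peternell and Campana--H\"oring--Peternell, use abundance and $\kappa=0$ to force $mK_{X'}\sim 0$, hence $c_1(X')=0$ with terminal, i.e.\ isolated, singularities) is exactly what the paper does. But the middle of your argument --- a Beauville--Bogomolov-type splitting of $X'$, after a finite quasi-\'etale cover, into tori, K3-type surfaces and strict Calabi--Yaus, followed by a block-by-block verification of the Green--Voisin criterion --- is not the paper's route, and it has a genuine gap. Such a decomposition in the \emph{singular} K\"ahler category is not available here (and was not available at the time), and more importantly it only identifies $X'$ up to finite covers \emph{and bimeromorphic modifications}; the paper explicitly warns that algebraic approximability is not preserved under finite or bimeromorphic maps, which is precisely why it does not deduce the theorem from the structure result \cite[Cor.~9.6]{CHP15}. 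The equivariant Green--Voisin criterion (Theorem~\ref{thm:approx crit quotient intro}) descends approximability only through an honest finite group action on an \emph{unobstructed compact K\"ahler manifold}, so it cannot be fed a singular block or a cover that is merely bimeromorphic to a product. Your plan therefore stalls exactly at the point you flag as ``the hard part'': you have no mechanism to handle the singularities of the cover or to make the descent legitimate.

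The idea you are missing is the paper's smoothness-or-projectivity dichotomy. Pass to the index one cover $X_1\to X'$, which has isolated canonical singularities, $\omega_{X_1}\isom\O_{X_1}$, and a cyclic group acting freely in codimension one with quotient $X'$. The Albanese fibre bundle theorem (Theorem~\ref{thm:alb cy3}) gives local product structure, whence $q(X_1)\le\dim (X_1)_\sg$; if $X_1$ is singular this forces $q(X_1)=0$, then Serre duality and triviality of $\omega_{X_1}$ give $H^2(X_1,\O_{X_1})=0$, and the Kodaira embedding theorem (in its version for Moishezon K\"ahler spaces with rational singularities) makes $X_1$, hence $X'$, projective. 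So in the non-projective case $X_1$ is \emph{smooth}, and Bogomolov--Tian--Todorov plus Hard Lefschetz verify the hypotheses of Theorem~\ref{thm:approx crit quotient intro}(\ref{itm:acqi.2}) directly, with no decomposition into factors needed. (Even in the projective case one still has to work: strong approximation requires showing every nearby fibre of $\Deflt(X')$ lies in the closure of the projective locus, which the paper handles by openness of versality together with stability of the Calabi--Yau condition under deformation.) Your observation that the criterion is ``vacuous when $H^2(\O)=0$'' is the germ of the right idea, but it needs to be tied to the singular locus via the Albanese bundle structure rather than to a conjectural decomposition.
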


The MMP predicts that the basic building blocks of \kahler manifolds are rationally connected varieties, manifolds of Kodaira dimension zero, and varieties of general type. The former and the latter are automatically algebraic.
Hence it is natural to consider the case $\kappa = 0$ first.
We will come back to this observation in Corollary~\ref{cor:kahler groups}, where we study the fundamental groups of arbitrary \kahler threefolds.

Theorem~\ref{thm:bim approx intro} is an immediate consequence of Theorem~\ref{thm:approx intro} below, combined with the MMP for \kahler threefolds as established recently by H\"oring--Peternell and Campana--H\"oring--Peternell~\cite{HP15, CHP15}.

\begin{thm} \label{thm:approx intro}
Let $X$ be a \cy threefold in the sense of Definition~\ref{dfn:cy} below. Then:
\begin{enumerate}
\item\label{itm:approx intro.1} The miniversal deformation $\sX \to \Def(X)$ of $X$ is a strong algebraic ap\-proximation of $X$.
\item\label{itm:approx intro.2} If the singularities of $X$ are isolated, then also the miniversal locally trivial deformation $\sX' \to \Deflt(X)$ of $X$ is a strong algebraic approximation.
\end{enumerate}
\end{thm}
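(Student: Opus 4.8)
The plan is to deduce both statements from the Hodge-theoretic approximation criterion of Green and Voisin, in the generalized form established earlier in the paper, applied to the period map for the weight-two Hodge structure on $H^2$. Recall that a small deformation $X_t$ is projective as soon as it carries a rational \kahler class, and that Kählerness is open in families; hence it suffices to prove that projective fibres are dense. By the criterion, this density follows once we verify that the derivative of the period map, namely the contraction
\[
\bar\nu_\omega\colon H^1(X,\mathcal T_X) \lto H^2(X,\O_X), \qquad \theta \longmapsto \theta \lrcorner [\omega],
\]
against a fixed \kahler class $[\omega]\in H^1(X,\Omega^1_X)$, is surjective. Here I use that $X$ has canonical, hence rational and \db, singularities, so that $H^2(X,\C)$ carries a pure Hodge structure whose $(0,2)$-part is $H^2(X,\O_X)$, and that by Bogomolov--Tian--Todorov unobstructedness (in the form valid for \cy threefolds with canonical singularities) the deformation space is smooth with Kodaira--Spencer isomorphism onto $H^1(X,\mathcal T_X)$.

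The main computation is the surjectivity of $\bar\nu_\omega$, which I would obtain by dualizing. Since $X$ is \cy, its dualizing sheaf becomes trivial after passing to the index-one cover, so Serre duality identifies $H^2(X,\O_X)^\vee \isom H^1(X,\O_X)$ and $H^1(X,\mathcal T_X)^\vee \isom H^2(X,\Omega^1_X)$, under which the transpose of $\bar\nu_\omega$ becomes the Lefschetz operator
\[
L = \cup[\omega]\colon H^{0,1}(X) \lto H^{1,2}(X), \qquad \eta \longmapsto [\omega]\cup\eta.
\]
Thus $\bar\nu_\omega$ is surjective if and only if $L$ is injective. The latter is exactly the Hard Lefschetz property in degree one: on a compact \kahler space with rational singularities $L$ is injective on $H^1$ (equivalently $L^2\colon H^1\bij H^5$), and since $[\omega]$ has type $(1,1)$ the operator respects the Hodge decomposition, so $L\colon H^{0,1}\to H^{1,2}$ is injective. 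Establishing Hard Lefschetz and the purity of $H^\ast(X)$ in the singular setting is where I expect the real work to lie; I would secure it from the structure of the Albanese map $X\to\Alb(X)$, which by the earlier result is a surjective analytic fibre bundle, together with a resolution-plus-decomposition-theorem argument or the intersection-cohomology Kähler package. When $\dim\Alb(X)=0$ the target $H^2(X,\O_X)$ vanishes and $H^2(X)$ is purely of type $(1,1)$, so every small fibre is already projective; this degenerate case is a useful sanity check.

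Two further points complete the argument. First, when $K_X$ is merely torsion rather than trivial, I replace the above by its equivariant version on the index-one cover $\wt X\to X$, with $G$ the cyclic group of deck transformations. Choosing $[\omega]$ to be $G$-invariant (by averaging a \kahler class) and invoking equivariant Hard Lefschetz, one gets that $\bar\nu_\omega\colon H^1(\wt X,\mathcal T_{\wt X})^G\to H^2(\wt X,\O_{\wt X})^G$ is surjective, whereupon the finite-quotient form of the Green--Voisin criterion descends the density of projective fibres to $\Def(X)$.

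Second, to separate part~(1) from part~(2): for the full miniversal deformation the inclusion $H^1(X,\mathcal T_X)\inj T^1_X=\Ext^1(\Omega^1_X,\O_X)$ shows that surjectivity of $\bar\nu_\omega$ already forces surjectivity of the full period derivative $T^1_X\to H^2(X,\O_X)$, giving density in $\Def(X)$; for the locally trivial deformation the tangent space \emph{is} $H^1(X,\mathcal T_X)$, so the very same surjectivity yields density in $\Deflt(X)$. The isolatedness of the singularities enters precisely here: it guarantees that $\Deflt(X)$ is miniversal and well-behaved and that the equisingular family satisfies the hypotheses of the Hodge-theoretic criterion, so that the projective fibres produced really do sweep out a dense subset of $\Deflt(X)$.
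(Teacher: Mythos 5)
Your core computation in the smooth case (dualizing $\bar\nu_\omega$ to the Lefschetz operator $H^{0,1}\to H^{1,2}$ and invoking Hard Lefschetz on $H^1$) is correct and is equivalent, up to transposition, to the argument the paper uses on the index-one cover. But the overall strategy has a genuine gap at its foundation: you assume Bogomolov--Tian--Todorov unobstructedness ``in the form valid for \cy threefolds with canonical singularities,'' and this is false. Gross~\cite[Ex.~2.7]{Gro97b} gives a projective \cy threefold with isolated canonical singularities whose deformation space is non-reduced, so $\Def(X)$ is \emph{not} smooth in general and the Green--Voisin criterion cannot be applied to $X$ directly. The paper circumvents this by a structural reduction you are missing: using the Albanese fibre-bundle theorem it proves that a \emph{non-projective} \cy threefold with isolated singularities and $\omega_X\isom\O_X$ is automatically \emph{smooth} (because $q(X)\le\dim X_{\sg}=0$ forces $H^2(X,\O_X)=0$ by Serre duality, which would make $X$ projective). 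Hence the index-one cover is either projective --- in which case there is nothing to prove at that fibre --- or an honest smooth \cy manifold where BTT and classical Hard Lefschetz apply; all of your proposed machinery on purity, intersection cohomology and Hard Lefschetz for singular spaces is thereby avoided, and indeed would be needed precisely in the cases where unobstructedness fails.

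The second gap concerns part~(1). For non-isolated canonical singularities the index-one cover does not reduce you to the smooth case, and your claim that surjectivity of $\bar\nu_\omega$ on $H^1(X,\sT_X)$ ``forces surjectivity of the full period derivative $T^1_X\to H^2(X,\O_X)$'' cannot be run: besides the smoothness of $\Def(X)$ failing, the full miniversal family is not locally trivial, so $R^2\pi_*\Q$ is not a local system and there is no variation of Hodge structure to which a Green--Voisin density argument could apply (fibres of $\Def(X)$ include smoothings and partial smoothings with different topology). The paper instead passes to a crepant terminalization $f\!:Y\to X$, which has terminal (hence isolated) singularities, applies part~(2) to $Y$, and then pushes the approximation down to $\Def(X)$ using the blow-down result \cite[Prop.~11.4.1]{KM92} (valid because $R^1f_*\O_Y=0$ for rational singularities) together with Namikawa's projectivity criterion for Moishezon \kahler spaces with rational singularities. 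You would need to supply both of these ingredients --- the terminalization and the descent of projectivity along $f$ --- to complete part~(1). Finally, even for part~(2) with $X$ projective, the paper needs the ``openness of versality'' argument to conclude strong approximation at every point of $\Deflt(X)$; your proposal does not address how to handle points $t$ whose fibre is already projective but where no smoothness of the base is available.
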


Here we use the following definition of a \cy threefold. Note that there are several competing definitions in the literature, differing both in whether projectivity is required and in the kind of singularities allowed.

\begin{dfn}[\cy space] \label{dfn:cy}
A \emph{\cy space} is a normal compact \kahler space $X$ with canonical singularities and vanishing first Chern class $c_1(X) = 0 \in H^2(X, \R)$.
A \emph{\cy threefold} is a \cy space of dimension three.
\end{dfn}

\begin{rem-plain}[Deformations of Calabi--Yaus]
The property of being \cy in this sense is stable under small deformations (Lemma~\ref{lem:cy deformation}).
\end{rem-plain}

\begin{rem-plain}[Smooth Calabi--Yaus]
For compact \kahler \emph{manifolds} with $c_1 = 0$, algebraic approximability is well-known in any dimension. See e.g.~\cite[Prop.~4.2.2]{Cao13} for the argument.
However, as pointed out above, in general the vanishing $c_1 = 0$ can only be achieved at the cost of introducing terminal singularities and hence one needs to deal at least with these.
\end{rem-plain}

\subsection{Corollaries: fundamental groups of \kahler manifolds}

A general problem regarding the topology of \kahler manifolds is to understand which finitely presented groups appear as the fundamental group of a compact \kahler manifold. These groups are called \emph{\kahler groups} for short. Similarly, we say that a group is \emph{(complex-)projective} if it is the fundamental group of the analytification $X^\an$ of a smooth projective variety $X$ defined over $\C$.
Curiously, all known examples of \kahler groups are complex-projective, while most known restrictions on complex-projective groups also apply to \kahler groups.
This leads to the following long-standing open problem~\cite[(1.26)]{ABCKT96}.

\begin{ques}
Is every \kahler group the fundamental group of a complex projective manifold?
\end{ques}

We give an affirmative partial answer in dimension three: the fundamental group of any \kahler threefold can be built from projective groups by taking quotients and extensions.
Furthermore we give a complete affirmative answer for \kahler threefolds of Kodaira dimension zero.

\begin{cor}[Fundamental groups of \kahler threefolds, I] \label{cor:kahler groups}
Let $X$ be a compact \kahler threefold. Then there exist complex projective manifolds $F$ and $Z$ and exact sequences of groups
\[ \pi_1(F) \lto G \lto \pi_1(Z) \lto 1, \]
\[ G \lto \pi_1(X) \lto Q \lto 1, \]
where $Q$ is finite (hence in particular projective).
\end{cor}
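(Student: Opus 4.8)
The plan is to argue by cases according to the \kahler MMP for threefolds~\cite{HP15, CHP15}, which guarantees that $X$ is either uniruled or satisfies $\kappa(X) \ge 0$, and to produce in each case a fibration whose fibre and base carry projective fundamental groups. Two reduction principles would be used throughout: that $\pi_1$ is invariant under bimeromorphic maps of smooth compact \kahler manifolds, and that a resolution of a space with terminal (more generally klt) singularities induces an isomorphism on $\pi_1$. Together these let me replace $X$ by any of its resolutions, minimal models, or smooth birational models without affecting $\pi_1(X)$. The manifolds $F$ and $Z$ of the statement will then arise as projective realizations of a general fibre and of a base, while the finite group $Q$ serves to absorb any finite cover one cares to use.

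First I would treat the cases in which $\pi_1(X)$ is itself projective, taking $F$ a point, $G = \pi_1(Z) = \pi_1(X)$, and $Q$ trivial. If $X$ is uniruled, its maximal rationally connected fibration $X \dashrightarrow Z_0$ has simply connected general fibre and satisfies $\pi_1(X) \cong \pi_1(Z_0)$ with $\dim Z_0 \le 2$; a point, a compact curve, and a compact \kahler surface all have projective fundamental groups — the surface case by Kodaira's theorem that \kahler surfaces are algebraically approximable — so $\pi_1(X)$ is projective. If $\kappa(X) = 3$ then $X$ is of general type, hence Moishezon and \kahler, hence projective. If $\kappa(X) = 0$ then Theorem~\ref{thm:bim approx intro} furnishes a bimeromorphic model $X'$ with \Q-factorial terminal singularities whose miniversal locally trivial deformation is a strong algebraic approximation; as terminal threefold singularities are isolated, local triviality makes the total space a topological fibre bundle over a contractible base, whence $\pi_1(X) = \pi_1(X') = \pi_1(X'_t)$ for a projective fibre $X'_t$, and resolving the terminal singularities of $X'_t$ exhibits this group as projective.

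The heart of the matter is $\kappa(X) \in \{1,2\}$. Passing to a smooth model I would realize the Iitaka fibration as a morphism $f\colon X \to Y$ with connected fibres, $\dim Y = \kappa(X)$, and general fibre $F$ satisfying $\kappa(F) = 0$; here $Y$ is projective, being the image of a pluricanonical map, and $F$ is an elliptic curve when $\kappa(X) = 2$ and a smooth \kahler surface of Kodaira dimension zero when $\kappa(X) = 1$. In either case $F$ has projective fundamental group — elliptic curves are projective, and \kahler surfaces of Kodaira dimension zero admit algebraic approximations with constant $\pi_1$ — so after replacing $F$ by a projective manifold with the same fundamental group, the homotopy exact sequence of $f$ reads
\[ \pi_1(F) \lto \pi_1(X) \lto \pi_1(Y) \lto 1. \]
Taking $Z$ a projective model of $Y$ and $G = \pi_1(X)$ now yields the two sequences of the statement with $Q$ trivial.

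The step I expect to be most delicate is establishing the homotopy exact sequence itself: one must know that for a surjective morphism with connected general fibre between compact \kahler manifolds the sequence $\pi_1(F) \to \pi_1(X) \to \pi_1(Y) \to 1$ remains exact in the presence of singular fibres, and one must verify the $\pi_1$-invariance asserted above across MMP contractions and flips, resolutions of terminal singularities, and locally trivial deformations of a singular minimal model. The finite cokernel $Q$ is retained as the natural receptacle for any finite operation one may prefer to invoke — a finite base change trivializing multiple fibres of the Iitaka fibration, or a finite \'etale cover used to realize a fibre group projectively — after which the identification of $\pi_1(X)$ with a quotient of $G$ is guaranteed only up to passing to a finite-index subgroup.
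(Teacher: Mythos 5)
Your reduction steps (MRC quotient in the uniruled case, general type, and $\kappa=0$ via Theorem~\ref{thm:bim approx intro}) agree with the paper, but the case $\kappa(X)\in\{1,2\}$ --- which is precisely where the finite group $Q$ is needed --- contains a genuine gap. You assert that the Iitaka fibration $f\colon X\to Y$ yields an exact sequence $\pi_1(F)\to\pi_1(X)\to\pi_1(Y)\to 1$ and conclude with $G=\pi_1(X)$ and $Q$ trivial. This sequence is false in the presence of multiple fibres, already for elliptic surfaces: for an elliptic fibration over $\P^1$ with multiple fibres of multiplicities $(2,3,7)$, the group $\pi_1(X)$ surjects onto the infinite, nonabelian $(2,3,7)$ triangle group $\pi_1(\P^1,\Delta)$, whereas exactness of your sequence would make $\pi_1(X)$ a quotient of $\pi_1(F)=\Z^2$, hence abelian. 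What one actually has in general is only the orbifold exact sequence $\pi_1(F)\to\pi_1(X)\to\pi_1(Y,\Delta)\to 1$, and projective \emph{orbifold} fundamental groups form a strictly larger class than projective groups, so this cannot be fed directly into the statement.

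You do name the correct fix in your closing paragraph --- a finite base change killing the multiple fibres, with $Q$ absorbing the finite index --- but you do not carry it out, and that construction is the heart of the proof. One must produce a finite \emph{Galois} cover $p\colon\wt Z\to Z$ (via the ramified cyclic covering trick \cite[Def.~2.50]{KM98} applied to the snc divisor supporting the non-reduced locus together with an auxiliary very ample divisor) such that: (i) the normalized pullback $\wt\phi\colon\wt X\to\wt Z$ is reduced in codimension one, so that Nori's lemma \cite[Lemma~1.5.C]{Nor83} gives the exact sequence $\pi_1(F)\to\pi_1(\wt X)\to\pi_1(\wt Z)\to 1$; (ii) $\wt Z$ remains klt, so that $\pi_1(\wt Z)$ agrees with $\pi_1$ of a resolution and is therefore projective by \cite[Cor.~1.1(1)]{Tak03}; and (iii) the image of $\pi_1(\wt X)$ in $\pi_1(X)$ is a \emph{normal} subgroup of finite index \cite[Prop.~2.9.1]{Kol93}, which is what produces the finite quotient $Q$. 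The statement is then obtained with $G=\pi_1(\wt X)$, not $G=\pi_1(X)$. Without this construction the argument for $\kappa(X)\in\{1,2\}$ is incomplete.
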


\begin{cor}[Fundamental groups of \kahler threefolds, II] \label{cor:pi1}
Let $X$ be a compact \kahler threefold with $\kappa(X) = 0$. Then:
\begin{enumerate}
\item There exists a smooth projective threefold $Y$ with $\kappa(Y) = 0$ such that $\pi_1(X) \isom \pi_1(Y)$. In particular, $\pi_1(X)$ is almost abelian, i.e.~it contains an abelian normal subgroup $\Gamma \trianglelefteq \pi_1(X)$ of finite index.
\item All finite index abelian subgroups of $\pi_1(X)$ have rank $2 \cdot \wt q(X)$, where $0 \le \wt q(X) \le 3$ is the augmented irregularity of $X$ (Definition~\ref{dfn:irregularity}).
\item In particular, if $\wt q(X) = 0$ then $\pi_1(X)$ is finite.
\end{enumerate}
\end{cor}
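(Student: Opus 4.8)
The plan is to reduce everything to the projective setting, using the algebraic approximation of Theorem~\ref{thm:bim approx intro} as the bridge between the \kahler and the projective worlds, and then to feed the resulting smooth projective threefold into the structure theory of threefolds of Kodaira dimension zero. The three inputs that make this work are: invariance of $\pi_1$ under resolution of rational singularities (so terminal threefolds behave like their resolutions), invariance of $\pi_1$ under locally trivial deformation, and the Albanese fibre bundle theorem recalled in the introduction.

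\emph{Construction of $Y$ and the isomorphism $\pi_1(X)\isom\pi_1(Y)$.} Applying Theorem~\ref{thm:bim approx intro} to $X$ produces a bimeromorphic modification $X \dashrightarrow X'$ with $X'$ a normal compact \kahler space with \Q-factorial terminal singularities, together with a strong algebraic approximation $\sX' \to \Deflt(X')$. Because this deformation is \emph{locally trivial}, its total space is topologically a fibre bundle, so every fibre $X'_t$ is homeomorphic to $X'$; in particular $\pi_1(X'_t) \isom \pi_1(X')$, and the \cy property is deformation stable so $\kappa(X'_t) = 0$. Choosing $t_0$ in the dense set where $X'_{t_0}=:X''$ is projective and taking a resolution $Y \to X''$, I would observe that $\pi_1$ is unchanged at every step: terminal (hence rational) singularities force the resolution to induce an isomorphism $\pi_1(Y) \isom \pi_1(X'')$, while for the passage from $X$ to $X'$ one compares both spaces to a common resolution $W$ and combines the bimeromorphic invariance of $\pi_1$ among compact \kahler manifolds with the same resolution-invariance for the rational singularities of $X'$. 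This yields $\pi_1(Y) \isom \pi_1(X)$, with $Y$ smooth projective and $\kappa(Y)=\kappa(X'')=0$ since $\kappa$ is a birational invariant for canonical singularities; this is the first assertion of~(1).

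\emph{Almost abelianness via the Albanese bundle.} With $Y$ smooth projective, $\kappa(Y)=0$, I would pass to a finite \'etale cover $\wt Y \to Y$ realizing the augmented irregularity, so that $q(\wt Y) = \wt q(X)$, and invoke the Albanese fibre bundle theorem: the Albanese map $\wt Y \to A := \Alb(\wt Y)$ is a surjective analytic fibre bundle with connected fibre $F$, a \cy space of smaller dimension with $\wt q(F) = 0$. As $A$ is a torus, $\pi_2(A)=0$ and the homotopy sequence of the bundle degenerates to a short exact sequence
\[ 1 \lto \pi_1(F) \lto \pi_1(\wt Y) \lto \pi_1(A) \lto 1, \qquad \pi_1(A) \isom \Z^{2\wt q(X)}. \]
The crucial structural input is that a \cy space of dimension $\le 3$ with vanishing augmented irregularity has \emph{finite} fundamental group; this is where a Beauville--Bogomolov type decomposition enters, a finite (quasi-\'etale) cover splitting off only simply connected strict \cy and K3 factors once the abelian part has been exhausted. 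Granting this, $\pi_1(F)$ is finite, so $\pi_1(\wt Y)$ contains $\Z^{2\wt q(X)}$ as a normal abelian subgroup of finite index; since $\pi_1(\wt Y) \le \pi_1(Y) \isom \pi_1(X)$ has finite index, this proves the almost abelian statement in~(1).

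\emph{The rank statement and the finite case.} For~(2) I would use that any two finite-index abelian subgroups of a group are commensurable and hence of equal rank, so it suffices to compute one. If $\Gamma \le \pi_1(X)$ is abelian of finite index, then $\Gamma \isom \pi_1(\wh Y)$ for a finite \'etale cover $\wh Y$, and abelianness gives $\Gamma \isom H_1(\wh Y,\Z)$, of rank $b_1(\wh Y) = 2\,q(\wh Y) \le 2\,\wt q(X)$; the subgroup $\Z^{2\wt q(X)}$ produced above has rank exactly $2\,\wt q(X)$, forcing equality. Part~(3) is then the special case $\wt q(X)=0$: the Albanese is trivial and the finiteness input applied to $F=Y$ shows $\pi_1(X)\isom\pi_1(Y)$ is finite. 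I expect the genuine difficulty to be concentrated in this finiteness statement for \cy spaces with $\wt q = 0$: it rests on the full decomposition theorem together with a careful comparison of $\pi_1$ of the smooth locus with $\pi_1$ of the whole space (the local fundamental groups of the isolated terminal singularities being finite), rather than on the elementary homotopy-sequence bookkeeping that handles everything else.
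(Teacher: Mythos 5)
Your reduction to a smooth projective threefold $Y$ with $\kappa(Y)=0$ and $\pi_1(X)\isom\pi_1(Y)$ is essentially the paper's argument: Theorem~\ref{thm:bim approx intro} plus Lemma~\ref{lem:cy deformation} to find a projective fibre, Takayama's theorem for the invariance of $\pi_1$ under resolution of klt singularities, and topological triviality of the family. One caveat on the last point: local triviality of a deformation in the sense of this paper is local on the \emph{total space}, not on the base, so it does not by itself make the family a topological fibre bundle over $B$; the paper supplies this via Thom's first isotopy lemma, and you need the same input.

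The genuine gap is in your treatment of almost abelianity, the rank formula, and part~(3). The paper at this stage simply quotes Koll\'ar's theorem that a \emph{smooth projective} threefold with $\kappa=0$ has virtually abelian fundamental group (\cite[(4.17.3)]{Kol95}, see also \cite{NS95}), and then identifies the rank with $2\wt q(X)$ by elementary commensurability bookkeeping. You instead re-derive almost abelianity from the Albanese fibre bundle of a quasi-\'etale cover realizing $\wt q$, which requires two things you do not have. First, the assertion $\wt q(F)=0$ for the Albanese fibre is not automatic: a priori $F$ could be an elliptic curve or a bielliptic/abelian surface with infinite $\pi_1$, and ruling this out requires Fujiki's additivity of irregularity (Proposition~\ref{prp:Fuj83}) to contradict maximality of $q$ on the chosen cover. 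Second, and more seriously, the ``crucial structural input'' you grant --- that a \cy threefold with $\wt q=0$ has finite $\pi_1$ --- is exactly statement~(3) of the corollary when $\wt q(X)=0$ (in that case $F$ is the whole threefold), so your proof of~(3) is circular. The decomposition theorem you invoke to close this loop is, for \emph{singular} projective \cy threefolds, not among the tools of this paper and was not available in the required form (with control of fundamental groups of the factors) at the time; the only finiteness result proved here is Corollary~\ref{cor:pi1 cy3}, which needs $\chi(X,\O_X)\ne 0$. Koll\'ar's theorem applies to smooth projective threefolds with $\kappa=0$ without any minimality or $c_1=0$ hypothesis and is proved by entirely different (Shafarevich-map) methods, which is precisely why the paper first deforms and resolves to reach the smooth projective setting before addressing the group theory.
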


\begin{rem-plain}[Work of Campana and Claudon]
In~\cite[Thm.~1.1]{CC14} Campana and Claudon have already proved the almost abelianity part of Corollary~\ref{cor:pi1}, without using the Minimal Model Program for \kahler threefolds.
\end{rem-plain}

A \cy threefold $X$ with $\wt q(X) = 0$ is necessarily projective (Remark~\ref{rem:wt q = 0}).
The following finiteness criterion for $\pi_1(X)$ also applies to certain non-projective \cy threefolds.
For projective varieties, it was proven in all dimensions in~\cite[Prop.~8.23]{BBdecomp}.

\begin{cor}[Fundamental groups of \cy threefolds] \label{cor:pi1 cy3}
Let $X$ be a \cy threefold. If $\chi(X, \O_X) \ne 0$, then $\pi_1(X)$ is finite, of cardinality
\[ |\pi_1(X)| \le \frac 4 {|\chi(X, \O_X)|}. \]
\end{cor}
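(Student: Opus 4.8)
The plan is to combine multiplicativity of the holomorphic Euler characteristic under finite étale covers with a uniform bound $\chi(\,\cdot\,,\O)\le 4$ for simply connected \cy threefolds. Throughout I use that canonical singularities are rational and Cohen--Macaulay, so that $\chi(-,\O)$ and Serre duality are available on $X$ and may be compared with a resolution.

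First I would establish that for a finite étale cover $f\colon W\to X$ of degree $d$ one has $\chi(W,\O_W)=d\cdot\chi(X,\O_X)$. Choosing a resolution $\wt X\to X$ and forming $\wt W=W\x_X\wt X$ yields an étale cover $\wt W\to\wt X$ of the same degree resolving $W$; since rational singularities give $\chi(X,\O_X)=\chi(\wt X,\O_{\wt X})$ and likewise for $W$, the identity reduces to the smooth case, where it follows from Grothendieck--Riemann--Roch because the Todd class pulls back under the étale map $\wt W\to\wt X$. Next I would show that positive irregularity forces the Euler characteristic to vanish. If some finite étale cover $W\to X$ satisfies $q(W)>0$, then its Albanese map $a\colon W\to\Alb(W)$ is a surjective fibre bundle onto a positive-dimensional torus $A$ (the structure theorem established earlier in this paper). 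For $P\in\Pic^0(A)$ a twist does not change Euler characteristics on the torus base, so $\chi(W,a^*P)=\chi(W,\O_W)$ by the Leray spectral sequence, while $H^\bullet(W,a^*P)=0$ for generic $P$; hence $\chi(W,\O_W)=0$ and, by multiplicativity, $\chi(X,\O_X)=0$. Contrapositively, the hypothesis $\chi(X,\O_X)\ne0$ implies $\wt q(X)=0$, and then Corollary~\ref{cor:pi1}\,(3), applied to a resolution of $X$ (whose fundamental group, augmented irregularity and Kodaira dimension agree with those of $X$), shows that $\pi_1(X)$ is finite.

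Now let $\hat X\to X$ be the universal cover: a simply connected \cy threefold, finite étale over $X$ of degree $n=|\pi_1(X)|$, with $\chi(\hat X,\O_{\hat X})=n\cdot\chi(X,\O_X)$. The key point is the bound $\chi(\hat X,\O_{\hat X})\le 4$. Since this number is nonzero, the torsion canonical sheaf $\omega_{\hat X}$ is nontrivial, so $h^0(\hat X,\omega_{\hat X})=0$ and hence $h^3(\hat X,\O_{\hat X})=0$ by Serre duality; moreover $q(\hat X)=h^1(\hat X,\O_{\hat X})=0$ as $\hat X$ is simply connected. Therefore $\chi(\hat X,\O_{\hat X})=1+h^2(\hat X,\O_{\hat X})$. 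To control $h^2$, let $m$ be the order of $\omega_{\hat X}$ and let $\pi\colon Y\to\hat X$ be the associated index-one cover: a quasi-étale cyclic cover with $\omega_Y\isom\O_Y$ and $\pi_*\O_Y=\bigoplus_{i=0}^{m-1}\omega_{\hat X}^{[-i]}$. Then $H^2(Y,\O_Y)=\bigoplus_{i=0}^{m-1}H^2(\hat X,\omega_{\hat X}^{[-i]})$ contains $H^2(\hat X,\O_{\hat X})$ as a direct summand, while Serre duality on the \cy space $Y$ gives $h^2(Y,\O_Y)=h^1(Y,\O_Y)=q(Y)\le 3$. Consequently $h^2(\hat X,\O_{\hat X})\le 3$ and $\chi(\hat X,\O_{\hat X})\le 4$. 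Combining this with $\chi(\hat X,\O_{\hat X})=n\cdot\chi(X,\O_X)$ yields $|\pi_1(X)|=\chi(\hat X,\O_{\hat X})/\chi(X,\O_X)\le 4/|\chi(X,\O_X)|$.

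The step I expect to be the main obstacle is the bound in the previous paragraph: it requires handling the twisted reflexive cohomology $H^\bullet(\hat X,\omega_{\hat X}^{[i]})$ and Serre duality on the singular (canonical, hence Cohen--Macaulay) space, and checking that the index-one cover $Y$ is again a \cy threefold, so that the elementary inequality $q(Y)\le 3$ genuinely bounds $h^2(\hat X,\O_{\hat X})$. The vanishing of $\chi$ under positive irregularity is the second delicate point, resting on the fibre-bundle structure of the Albanese map together with the generic vanishing of $H^\bullet(W,a^*P)$ on the torus base.
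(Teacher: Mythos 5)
Your argument is essentially correct, but it takes a genuinely different route from the paper. The paper's proof is a reduction to the projective case: after passing to a crepant terminalization (which changes neither $\pi_1$ nor $\chi$), it deforms $X$ to a nearby \emph{projective} fibre using Theorem~\ref{thm:approx intro}, invokes invariance of $\chi$ in the family \cite{BPS80} and topological triviality of the deformation (Thom's first isotopy lemma) to transport $\pi_1$ and $\chi$, and then quotes \cite[Prop.~8.23]{BBdecomp} as a black box. You instead reprove the statement intrinsically in the \kahler category, essentially transporting the Greb--Kebekus--Peternell argument by substituting Theorem~\ref{thm:alb cy3} for the projective Albanese machinery: positive irregularity of an \'etale cover forces $\chi = 0$ via the fibre-bundle structure over the Albanese torus, finiteness of $\pi_1$ then follows from Corollary~\ref{cor:pi1}, and the bound comes from the universal cover, the index-one cover and Serre duality, with $4 = 1 + 3$ reflecting the bound $q \le 3$ of Remark~\ref{rem:wt q finite}. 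What your route buys is transparency of the constant and independence from \cite{BPS80} and the isotopy lemma; what it does not buy is full independence from the deformation machinery, since the finiteness step still rests on Corollary~\ref{cor:pi1}, which is itself proved via Theorem~\ref{thm:bim approx intro}.

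Two points in your write-up need tightening, though neither is fatal. First, the contrapositive of ``some finite \emph{\'etale} cover has $q>0$'' is not ``$\wt q(X)=0$'': the augmented irregularity is defined via quasi-\'etale covers (e.g.\ the index-one cover), for which $\chi$ is not multiplicative, and it is not clear that $\wt q$ of a resolution agrees with $\wt q(X)$. What you actually prove --- every finite \'etale cover of $X$ has vanishing irregularity --- is exactly what is needed, because by \cite[Cor.~1.1(1)]{Tak03} finite \'etale covers of $X$ correspond to finite \'etale covers of a resolution $\wt X$, so your claim gives $\wt q(\wt X)=0$ and Corollary~\ref{cor:pi1}(3) applies to $\wt X$. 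Second, the generic vanishing $H^\bullet(W, a^*P)=0$ deserves an argument: since $a$ is locally trivial and $\Aut^0$ of the fibre acts trivially on the coherent cohomology of the fibre, each $R^i a_* \O_W$ is a flat bundle on the torus $A$; as $\pi_1(A)$ is abelian, such a bundle is a successive extension of elements of $\Pic^0(A)$, so twisting by a general $P \in \Pic^0(A)$ kills all cohomology (alternatively, $\chi(A, R^i a_*\O_W)=0$ directly because flat bundles have vanishing real Chern classes and $\dim A \ge 1$). With these repairs the proof is complete.
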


\begin{rem-plain}[Threefolds with $\chi \ne 0$]
If $X$ is a \cy threefold with trivial canonical bundle, then $\chi(X, \O_X) = 0$ by Serre duality.
But for a non-smooth, non-projective \cy threefold $X$ with $\omega_X \not\isom \O_X$, we have $\chi(X, \O_X) \ge 1$ (Lemma~\ref{lem:chi ne 0}).
\end{rem-plain}

\begin{rem-plain}[Orbifold fundamental groups]
Corollary~\ref{cor:kahler groups} is obtained by studying the pluricanonical map $\phi = \phi_{|mK_X|}\!: X \dashrightarrow Z$ of $X$.
A difficulty in the proof is that we have no control over the multiple fibres of $\phi$.
What we do have is an exact sequence
\[ \pi_1(F) \lto \pi_1(X) \lto \pi_1(Z, \Delta) \lto 1 \]
of orbifold fundamental groups~\cite[Prop.~11.9]{Cam09}. Here $\Delta$ is a suitable divisor on $Z$ encoding the singularities of $\phi$.
However, the class of projective orbifold fundamental groups is somewhat larger than the class of complex-projective groups.
For threefolds $X$ that satisfy the (unrealistic) assumption that the pluricanonical map is smooth, the above exact sequence simplifies to
\[ \pi_1(F) \lto \pi_1(X) \lto \pi_1(Z) \lto 1. \]
\end{rem-plain}

\PreprintAndPublication{
\subsection{Corollaries: Unobstructedness and smoothability} \label{sec:unobs sm}

The miniversal deformation space of a \cy manifold is smooth by the famous Bogomolov--Tian--Todorov theorem~\cite[Cor.~2]{Ran92}.
As soon as singularities are allowed, the situation becomes more complicated.
Gross~\cite[Ex.~2.7]{Gro97b} gave an example of a projective \cy threefold with isolated canonical singularities whose deformation space is non-reduced.
On the other hand, Namikawa~\cite[Thm.~A]{Nam94} showed that $\Def(X)$ is smooth for a projective \cy threefold with terminal singularities.
We extend Namikawa's result to the \kahler case, while showing at the same time that Gross' example does not exist in the non-projective setting.

\begin{cor}[Unobstructedness of \cy threefolds] \label{cor:unobs}
Let $X$ be a \cy threefold. Assume that either
\begin{enumerate}
\item $X$ has terminal singularities, or
\item $X$ is non-projective and has isolated canonical singularities.
\end{enumerate}
Then $\Def(X)$ is smooth.
\end{cor}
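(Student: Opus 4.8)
The plan is to split the proof according to whether $X$ is projective, disposing of the projective case by a direct appeal to Namikawa and reducing the non-projective case to the smooth situation via a finite cover. First suppose $X$ is projective. Since assumption~(2) requires $X$ to be non-projective, we must then be in case~(1), so $X$ is a projective \cy threefold with terminal singularities, and the smoothness of $\Def(X)$ is exactly Namikawa's theorem \cite[Thm.~A]{Nam94}. The entire content of the corollary therefore lies in the non-projective case, where Namikawa's (projective) argument is unavailable and, by Gross's example, the canonical-singularity analogue genuinely fails in the projective world.

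So assume $X$ is non-projective. In both cases the singularities of $X$ are then isolated: in case~(1) because terminal threefold singularities are isolated, and in case~(2) by hypothesis. Since a \cy threefold with $\wt q = 0$ is projective (Remark~\ref{rem:wt q = 0}), non-projectivity forces $\wt q(X) \ge 1$. I would pass to a Galois quasi-étale cover $\pi\colon \wt X \to X$ with group $G$ realizing the augmented irregularity, so that $\wt X$ is again a \cy threefold (canonical singularities and $c_1 = 0$ are preserved under quasi-étale covers, and a finite cover of a compact \kahler space is compact \kahler) with isolated singularities and $q(\wt X) = \wt q(X) \ge 1$. Now I invoke the fibre-bundle structure of the Albanese map, one of the main results of this paper: $\wt X \to \Alb(\wt X)$ is an analytic fibre bundle over a positive-dimensional complex torus with connected fibre $F$. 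A fibre bundle is locally a product, so locally $\operatorname{Sing}(\wt X) \isom U \times \operatorname{Sing}(F)$; were $F$ singular, $\wt X$ would carry singularities of dimension $\ge \dim \Alb(\wt X) \ge 1$, contradicting isolatedness. Hence $F$, and therefore $\wt X$, is smooth.

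Consequently $X \isom \wt X / G$ is a finite quotient of a smooth \cy threefold, and every singularity of $X$ is locally of the form $\C^3/G_x$ with $G_x$ acting freely away from the fixed point, i.e.\ an isolated quotient singularity of dimension three. The key point is then Schlessinger's rigidity theorem: an isolated quotient singularity of dimension $\ge 3$ is rigid, so $\sExt^1(\Omega^1_X, \O_X) = 0$. Thus $X$ admits no local smoothing directions, every deformation of $X$ is locally trivial, and such deformations are induced by $G$-equivariant deformations of $\wt X$; concretely $\Def(X) \isom \Def(\wt X)^G$. By the Bogomolov--Tian--Todorov theorem \cite[Cor.~2]{Ran92} the germ $\Def(\wt X)$ is smooth, and the fixed locus of the finite group $G$ acting on a smooth analytic germ is again smooth (linearize the action near a fixed point). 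Therefore $\Def(X)$ is smooth, which completes the non-projective case.

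The step I expect to be the main obstacle is the identification $\Def(X) \isom \Def(\wt X)^G$. Schlessinger's rigidity removes the smoothing directions, but one still has to show that the functor of the (now automatically locally trivial) deformations of the quotient $X$ is represented by the $G$-fixed locus of $\Def(\wt X)$ — that equivariant deformations of $\wt X$ descend to $X$ and, conversely, that every deformation of $X$ lifts $G$-equivariantly to $\wt X$. Verifying this descent, together with the bookkeeping needed to guarantee that the \cy condition and $c_1 = 0$ propagate to $\wt X$ so that both the Albanese fibre-bundle theorem and Bogomolov--Tian--Todorov apply, is where the real work lies; by contrast the rigidity input and the smoothness of the fixed locus are formal once the cover is in place.
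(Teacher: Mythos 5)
Your argument is correct and is essentially the paper's: split off the projective terminal case via \cite[Thm.~A]{Nam94}, pass to a smooth finite quasi-\'etale Galois cover of the non-projective space, descend smoothness of the deformation space to the quotient, and identify $\Def(X)$ with $\Deflt(X)$ by Schlessinger. There are two points of divergence worth recording. First, the paper takes the index one cover $X_1 \to X$ and gets smoothness of $X_1$ from Theorem~\ref{thm:cy3 K=0} (Proposition~\ref{prp:q le 1} plus Serre duality, $\omega_{X_1} \isom \O_{X_1}$, and the Kodaira embedding criterion), whereas you take a Galois cover realizing $\wt q(X) \ge 1$ and get smoothness directly from the Albanese fibre-bundle theorem together with isolatedness of the singularities; this is a clean shortcut, but note that your input ``non-projective $\Rightarrow \wt q(X) \ge 1$'' (Remark~\ref{rem:wt q = 0}) is itself proved via the index one cover and the argument of Theorem~\ref{thm:cy3 K=0}, so the same machinery is hiding inside the citation. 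Second, the descent step you flag as the main obstacle, $\Def(X) \isom \Def(\wt X)^G$, is precisely what the paper supplies: Proposition~\ref{prp:extG2}, applied through~(\ref{thm:approx crit quotient intro}.\ref{itm:acqi.2}), shows that when the $G$-action is free in codimension one the quotient of the $G$-invariant part of the miniversal family of $\wt X$ is the miniversal locally trivial deformation of $X$, so $\Deflt(X)$ is smooth; combined with $\Def(X) = \Deflt(X)$ from \cite[Thms.~1 and 3]{Sch71} this closes your gap. One detail you should make explicit when invoking Schlessinger: rigidity of the local quotient models requires the stabilizers to act freely outside a set of codimension $\ge 3$, not merely $\ge 2$ (which is all that ``free in codimension one'' gives a priori). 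This does hold here, because a nontrivial element fixing a curve, having no pseudo-reflections, would produce a curve of singularities in $X$, contradicting the isolatedness of $X_\sg$.
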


A closely related question is whether a singular Calabi--Yau admits a smoothing, i.e.~a flat deformation whose general fibre is smooth.
The following positive result was proved by Namikawa and Steenbrink~\cite[Thm.~1.3]{NS95} in the projective case.

\begin{cor}[Smoothability of \cy threefolds] \label{cor:smoothability}
Let $X$ be a \Q-factorial \cy threefold with only isolated canonical hypersurface singularities and $\omega_X \isom \O_X$. Then $X$ has a smoothing.
\end{cor}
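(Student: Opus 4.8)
The plan is to reduce to the projective case, settled by Namikawa and Steenbrink~\cite[Thm.~1.3]{NS95}, and to transport the resulting smoothing back to $X$ by means of the algebraic approximation and the unobstructedness proved above. If $X$ is projective there is nothing to show, so assume $X$ non-projective. Then $X$ has isolated canonical singularities, so Corollary~\ref{cor:unobs}(2) gives that $\Def(X)$ is smooth, and by Theorem~\ref{thm:approx intro}(1) the miniversal family $\pi\colon\sX\to\Def(X)$ is a strong algebraic approximation; in particular its projective fibres are dense.

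First I would use openness of versality: as $\pi$ is miniversal at the base point $0$, there is an open $U\ni 0$ over which $\pi$ restricts to a versal deformation of each of its fibres. Since the projective fibres are dense, I may choose $t\in U$ with $X_t := \pi^{-1}(t)$ projective. The next step is to verify that $X_t$ still meets the hypotheses of~\cite[Thm.~1.3]{NS95}: it is a projective \cy threefold (deformations of \cy spaces are \cy, by Lemma~\ref{lem:cy deformation}); for $t$ near $0$ its singularities remain isolated canonical hypersurface singularities; and $\omega_{X_t}\isom\O_{X_t}$.

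If $X_t$ is already smooth, the general fibre of $\pi$ is smooth and we are done. Otherwise Namikawa--Steenbrink furnish a smoothing of $X_t$, that is, a class $v\in T^1_{X_t}$ whose associated deformation has smooth general fibre. Versality of $\pi$ at $t$ means the Kodaira--Spencer map $T_t\Def(X)\to T^1_{X_t}$ is surjective, so $v$ lifts to a tangent vector of $\Def(X)$ at $t$; the nearby fibres of $\pi$ in that direction are then smooth. Hence the general fibre of $\pi$ is smooth, and restricting $\pi$ to a general disc through $0$ exhibits a smoothing of $X$.

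The principal obstacle is the transfer step, and within it the persistence of the Namikawa--Steenbrink hypotheses on the chosen fibre $X_t$. The triviality $\omega_{X_t}\isom\O_{X_t}$, rather than mere torsion of the canonical class, is not immediate; I would deduce it from $h^0(X_t,\omega_{X_t})\ge 1$, which holds by semicontinuity since $h^0(X,\omega_X)=1$, together with the fact that $\omega_{X_t}$ is torsion on the \cy fibre---a torsion line bundle admitting a nonzero section being trivial. More delicate is the \Q-factoriality demanded by~\cite{NS95}: a general deformation could a priori introduce a small resolution and destroy it, and excluding this uses the explicit hypersurface nature of the singularities. Once versality at a nearby projective point and these hypotheses are secured, the remainder is formal.
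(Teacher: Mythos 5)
Your reduction to the projective case is the right instinct, but the route you take to get there is both unnecessary and, as written, incomplete. The paper's own proof is two lines: Theorem~\ref{thm:cy3 K=0} states that a \cy threefold with only isolated canonical singularities and $\omega_X \isom \O_X$ which is \emph{non-projective} is automatically \emph{smooth}. Since your $X$ satisfies exactly those hypotheses, the non-projective case is vacuous --- there is nothing to smooth --- and the projective case is precisely \cite[Thm.~1.3]{NS95}. No deformation-theoretic transfer is needed at all. (Theorem~\ref{thm:cy3 K=0} is itself a consequence of the Albanese fibre bundle structure plus the Kodaira embedding theorem, and it is the mechanism the paper uses repeatedly to dispose of the non-projective case.)

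Beyond being a detour, your transfer argument has a genuine unresolved gap, which you yourself flag but do not close: to apply \cite[Thm.~1.3]{NS95} to the nearby projective fibre $X_t$ of the miniversal family $\sX \to \Def(X)$, you must know that $X_t$ is still \Q-factorial with isolated canonical \emph{hypersurface} singularities. The family $\sX \to \Def(X)$ is the full miniversal deformation, not the locally trivial one, so the singularities of $X_t$ need not be locally isomorphic to those of $X$; while canonicity persists by Kawamata's theorem and isolatedness is an open condition, neither the hypersurface property nor \Q-factoriality is established by anything in the paper or in your sketch, and \Q-factoriality is well known to be unstable under small (analytic) deformations of threefolds. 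Saying that excluding this ``uses the explicit hypersurface nature of the singularities'' is a placeholder, not an argument. Since these are actual hypotheses of the Namikawa--Steenbrink theorem, the proposal does not yet constitute a proof; the fix is simply to invoke Theorem~\ref{thm:cy3 K=0} and drop the transfer entirely.
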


Regarding higher-dimensional \cy spaces, we still have the following partial result.
It applies for example to Calabi--Yaus whose index one cover is smooth.

\begin{cor}[Quotients of smooth Calabi--Yaus] \label{cor:quot cy}
Let $X$ be a \cy manifold and $G$ a finite group acting on $X$ freely in codimension one.
Then the quotient space $Y = X/G$ has a smooth locally trivial deformation space $\Deflt(Y)$, and the miniversal family $\mf Y \to \Deflt(Y)$ is a strong algebraic approximation of $Y$.
\end{cor}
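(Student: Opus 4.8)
The strategy is to run everything through the smooth Calabi--Yau manifold $X$, whose Kuranishi space is already unobstructed, and to descend to $Y$ by taking $G$-invariants. Concretely, I would (i) identify the miniversal locally trivial deformation of $Y$ with the $G$-equivariant deformations of $X$, (ii) deduce smoothness of $\Deflt(Y)$ from the Bogomolov--Tian--Todorov theorem together with a linearization of the $G$-action, and (iii) obtain the strong algebraic approximation by applying the Green--Voisin mechanism to $X$ in a $G$-equivariant fashion. Note that $Y$ is a normal compact \kahler space with quotient singularities and $c_1(Y)=0$ (since $\pi^*K_Y=K_X$ is torsion and $\pi^*$ is injective on $H^2(-,\R)$), so its locally trivial deformation theory is well behaved; I would deliberately \emph{not} require $Y$ itself to be \cy, as a quotient free in codimension one need only be klt.

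For step (i): since $G$ acts freely in codimension one, the quotient map $\pi\colon X\to Y$ is quasi-\'etale, whence $T_Y\isom(\pi_*T_X)^G$ as reflexive sheaves and $H^1(Y,T_Y)\isom H^1(X,T_X)^G$. More structurally, quotienting a $G$-equivariant deformation of $X$ by $G$ yields a locally trivial deformation of $Y$, and conversely a locally trivial deformation of $Y$ lifts canonically to a $G$-equivariant deformation of $X$, because its local models are exactly the (unchanged) quotient singularities of $Y$. This identifies $\Deflt(Y)$ with the germ $\Def(X)^G$ of $G$-fixed points in the Kuranishi space of $X$, carrying the universal $G$-equivariant family. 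For step (ii), the Bogomolov--Tian--Todorov theorem \cite[Cor.~2]{Ran92} gives that $\Def(X)$ is smooth; the finite group $G$ acts on this germ fixing the base point, so linearizing the action (average a local chart) exhibits the fixed locus $\Def(X)^G$ as a smooth linear subgerm. Hence $\Deflt(Y)$ is smooth.

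For step (iii) I would first average an arbitrary \kahler class over $G$ to obtain a $G$-invariant \kahler class $\omega$ on $X$ (the \kahler cone is open, convex, and $G$-stable); it descends to a \kahler class on $Y$. For a compact \kahler manifold with $c_1=0$ the Green--Voisin criterion applies: by \cite[Prop.~4.2.2]{Cao13} the cup-product map
\[ \omega\cup(-)\colon H^1(X,T_X)\lto H^2(X,\O_X) \]
is surjective, so the Hodge loci of rational classes near $\omega$ are dense in $\Def(X)$. This map is $G$-equivariant, and since $|G|$ is invertible the invariant functor is exact, so surjectivity persists after passing to the invariant parts $H^1(X,T_X)^G\to H^2(X,\O_X)^G$. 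Running the density argument inside $\Def(X)^G$ then produces, arbitrarily close to the central fibre, points $t$ at which a $G$-invariant rational class becomes of type $(1,1)$ and \kahler. Such a class makes $X_t$ projective with a $G$-invariant polarization, hence the fibre $Y_t=X_t/G$ is projective as well; thus projective fibres are dense and $\mf Y\to\Deflt(Y)$ is a strong algebraic approximation.

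The main obstacle is step (iii), namely making the Green--Voisin density argument work \emph{equivariantly}: one must verify that the full period-theoretic input---openness of the \kahler cone, density of rational Hodge classes, and the submersivity supplied by the cup-product map---descends to the $G$-invariant sub-variation of Hodge structure on $H^2(X)$ carried by $\Def(X)^G$, so that it is invariant rational classes, rather than arbitrary ones, that become Hodge along the family. Once the surjectivity on $G$-invariants above is in hand this is essentially formal, but isolating and proving the equivariant criterion is the technical heart that generalizes Green--Voisin to finite quotients.
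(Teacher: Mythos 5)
Your proposal follows essentially the same route as the paper, which deduces this corollary from its general approximation criterion for quotients (Theorem~\ref{thm:approx crit quotient intro}): extend the $G$-action to the Kuranishi family (via Rim's equivariance theorem), linearize so that the fixed-point germ $\Delta=\Def(X)^G$ is smooth, identify the quotient family over $\Delta$ as the miniversal locally trivial deformation of $Y$ using the trace isomorphism $H^1(Y,\sT_Y)\isom H^1(X,\sT_X)^G$, and run the Green--Voisin density argument on the $G$-invariant sub-VHS of $R^2\pi_*\Q$, exactly as you outline. The one small divergence is that where you propose to lift locally trivial deformations of $Y$ back to $G$-equivariant deformations of $X$ in order to establish miniversality, the paper sidesteps that lifting entirely by combining surjectivity of the Kodaira--Spencer map with a dimension count, which is easier to make rigorous.
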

}{}

\subsection{Further results}

In the course of the proof of Theorem~\ref{thm:approx intro}, we obtain two results which are of independent interest.
The first one concerns the structure of the Albanese map of a \cy threefold.
For projective varieties, this was proven in arbitrary dimensions by Kawamata~\cite[Thm.~8.3]{Kaw85}.

\begin{thm}[Albanese map of a \cy threefold] \label{thm:alb cy3 intro}
Let $X$ be a \cy threefold, with Albanese map $\alpha\!: X \to A := \Alb(X)$.
Then there exists a finite \'etale cover $A_1 \to A$ such that $X \x_A A_1$ is isomorphic to $F \x A_1$ over $A_1$, where $F$ is connected.
In particular, $\alpha$ is a surjective analytic fibre bundle with connected fibre.
\end{thm}

The second result is a Hodge-theoretic criterion for algebraic approximability.
Let $Y$ be a compact \kahler manifold.
To any \kahler class $[\omega] \in H^{1,1}(Y) = H^1(Y, \Omega_Y^1)$, one can associate a linear map
\[ \alpha_{[\omega]}\!: H^1(Y, \sT_Y) \xrightarrow{[\omega] \cup -} H^2(Y, \Omega_Y^1 \tensor \sT_Y) \xrightarrow{\text{contraction}} H^2(Y, \O_Y). \]
A theorem of Green and Voisin~\cite[Prop.~5.20]{Voi03} says that if $Y$ is unobstructed (meaning that $\Def(Y)$ is smooth) and $\alpha_{[\omega]}$ is surjective for some \kahler class $[\omega]$, then $Y$ has a strong algebraic approximation.
Our criterion is a generalization of this result to quotients of $Y$ by a finite group.

\begin{thm}[Approximation criterion for quotient spaces] \label{thm:approx crit quotient intro}
Let $Y$ be an unobstructed compact \kahler manifold, and let $G$ be a finite group acting on $Y$.
Assume that for some $G$-invariant \kahler class $[\omega]$ on $Y$, the map $\alpha_{[\omega]}$ defined above is surjective. Then:
\begin{enumerate}
\item\label{itm:acqi.1} The quotient space $X = Y/G$ admits a locally trivial strong algebraic approximation.
\item\label{itm:acqi.2} If the $G$-action on $Y$ is free in codimension one, then $\Deflt(X)$ is smooth and the miniversal locally trivial deformation $\mf X \to \Deflt(X)$ is a strong algebraic approximation of $X$.
\end{enumerate}
\end{thm}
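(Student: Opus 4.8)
The plan is to lift everything to the smooth cover $Y$ and run the Green--Voisin period argument $G$-equivariantly. The first step is purely deformation-theoretic: by the theory developed in Section~\ref{sec:deformation theory}, locally trivial deformations of $X = Y/G$ are governed by $G$-equivariant deformations of $Y$, whose miniversal base is the germ $\Def(Y)^G$ of $G$-fixed deformations. Since $\Def(Y)$ is smooth by hypothesis and $G$ is finite, the fixed germ $\Def(Y)^G$ is again smooth, with tangent space $H^1(Y, \sT_Y)^G$, and it carries a $G$-equivariant miniversal family $\sY \to \Def(Y)^G$ on which $G$ acts fibrewise. Taking the fibrewise quotient produces a deformation $\sX = \sY/G \to \Def(Y)^G$ of $X$. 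Near a point of $Y$ with nontrivial stabiliser one linearises the action and uses that a deformation of a smooth germ is (equivariantly) trivial; passing to the quotient then shows that $\sX \to \Def(Y)^G$ is \emph{locally trivial}.

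The second step is the equivariant Green--Voisin argument. Because $[\omega]$ is $G$-invariant, the map $\alpha_{[\omega]}$ is $G$-equivariant, so it restricts to
\[ \alpha_{[\omega]}^G\!: H^1(Y, \sT_Y)^G \lto H^2(Y, \O_Y)^G. \]
As $G$ is finite and we work in characteristic zero, the functor of $G$-invariants is exact, so surjectivity of $\alpha_{[\omega]}$ forces surjectivity of $\alpha_{[\omega]}^G$. Now I carry out the proof of~\cite[Prop.~5.20]{Voi03}, but restricted to the smooth base $\Def(Y)^G$: transporting $[\omega]$ flatly over this base and projecting onto the Hodge piece $H^{0,2}_t$ (identified with $H^2(Y,\O_Y)$ at $t=0$) defines a holomorphic map whose differential at the origin is $\alpha_{[\omega]}^G$. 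Considering instead the two-variable map $(t,\lambda)\mapsto$ (the $(0,2)$-part of $\lambda$ at $t$) on $\Def(Y)^G \times H^2(Y,\R)^G$ near $(0,[\omega])$, surjectivity of $\alpha_{[\omega]}^G$ makes it a submersion, so its zero locus submerses onto both factors. Feeding in the $G$-invariant rational classes $\lambda\in H^2(Y,\Q)^G$ — dense in $H^2(Y,\R)^G$ because $G$ acts on $H^2(Y,\Q)$ by a rational representation — we obtain a dense set of parameters $t$ near $0$ for which $\lambda$ is of type $(1,1)$ on $Y_t$. As $\lambda$ lies near the K\"ahler class $[\omega]$ and the K\"ahler cone is open, $\lambda$ is K\"ahler; hence $Y_t$ is projective by the Kodaira embedding theorem, and the projective fibres are dense in $\Def(Y)^G$.

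The third step descends to the quotient. For each such $t$, the fibre $Y_t$ is projective, carries the residual $G$-action inherited from $\sY$, and has the $G$-invariant rational K\"ahler class $\lambda$; averaging an ample divisor (or invoking that a finite quotient of a projective variety is projective) shows $X_t = Y_t/G$ is projective. Thus the locally trivial family $\sX \to \Def(Y)^G$ has dense projective fibres, proving part~\ref{itm:acqi.1}. For part~\ref{itm:acqi.2}, the hypothesis that $G$ act freely in codimension one is exactly what guarantees (again by Section~\ref{sec:deformation theory}) that the comparison map identifies $\Deflt(X)$ with $\Def(Y)^G$ as deformation functors; hence $\Deflt(X)$ is smooth and $\sX \to \Def(Y)^G$ is the miniversal locally trivial family $\mf X \to \Deflt(X)$. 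The density just established then says this miniversal family is a strong algebraic approximation.

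The routine parts are the equivariance of Green--Voisin — which, given the smooth equivariant base, reduces to the exactness of $(-)^G$ together with the density of invariant rational classes — and the projectivity of the quotient fibres. The main obstacle is the deformation-theoretic backbone: proving that $G$-equivariant deformations of $Y$ induce \emph{locally trivial}, not merely flat, deformations of $X$ across its quotient singularities, and that under the codimension-one hypothesis this comparison is an isomorphism onto the miniversal locally trivial deformation $\Deflt(X)$. This is where the quotient singularities must be controlled, and it is the step that genuinely goes beyond the smooth Green--Voisin setting.
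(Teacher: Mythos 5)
Your proposal follows essentially the same route as the paper: extend the $G$-action to the miniversal family restricted to the smooth fixed locus $\Def(Y)^G$ (the paper does this via Rim's equivariant structure theorem, using that finite groups are linearly reductive), show the quotient family is a locally trivial deformation of $X$ by averaging local trivializations, run the Green--Voisin period argument on the $G$-invariant sub-variation of Hodge structure using the density of $H^2(Y,\Q)^G$ in $H^2(Y,\R)^G$, and descend projectivity to $X_t = Y_t/G$. The ``comparison'' you defer in part~(2) is exactly what the paper supplies: the trace map gives $H^1(X,\sT_X)\isom H^1(Y,\sT_Y)^G$ when $G$ acts freely in codimension one, so the Kodaira--Spencer map of the quotient family is surjective and a dimension count identifies it with the miniversal locally trivial deformation.
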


\begin{rem-plain}[Generalizations]
It seems reasonable to expect that Theorem~\ref{thm:approx crit quotient intro} can be generalized further to, say, locally trivial deformations of spaces with rational singularities. However, the present version is sufficient for our purposes.
\end{rem-plain}

\PreprintAndPublication{
\subsection{Outline of proof of Theorem~\ref{thm:approx intro}}

The proof of Theorem~\ref{thm:approx intro} proceeds in three main steps. Let $X$ be a \cy threefold. We wish to show that $X$ admits an algebraic approximation.

\subsubsection*{Step 1: $X$ has isolated singularities and trivial canonical sheaf}

In this case, Theorem~\ref{thm:alb cy3 intro} on the Albanese map of $X$, combined with Kodaira's Embedding Theorem, implies rather quickly that if $X$ is non-projective then it is smooth (Theorem~\ref{thm:cy3 K=0}). For smooth Calabi--Yaus, approximability is known by a combination of the Bogomolov--Tian--Todorov theorem, the Hard Lefschetz theorem and the Green--Voisin criterion mentioned above.

\subsubsection*{Step 2: $X$ has isolated singularities}

By a standard construction known as the ``index one cover'', there is a finite Galois cover $X_1 \to X$ such that $X_1$ satisfies the assumptions of Step 1.
If $X_1$ is projective, then so is $X$. On the other hand, an algebraic approximation of $X_1$ descends to $X$ thanks to Theorem~\ref{thm:approx crit quotient intro}.
This proves~(\ref{thm:approx intro}.\ref{itm:approx intro.2}).

\subsubsection*{Step 3: General case}

We consider a ``crepant terminalization'' $Y \to X$, which is a partial resolution of $X$ such that $Y$ satisfies the assumptions of Step 2. By a general deformation theoretic result, an approximation of $Y$ blows down to one of $X$ thanks to the fact that $X$ has rational singularities.
This proves~(\ref{thm:approx intro}.\ref{itm:approx intro.1}) and finishes the proof of Theorem~\ref{thm:approx intro}.
}{}

\subsection{Earlier (and later) work}

We briefly review the current knowledge on algebraic approximation of \kahler manifolds.
As already mentioned, the case of surfaces is due to Kodaira~\cite{Kod63} and Buchdahl~\cite{Buc06, Buc08}, while negative results in dimension $\ge 4$ were given by Voisin~\cite{Voi04, Voi06}.
More recently, Schrack~\cite{Sch12} established approximability for certain kinds of $\P^1$-fibrations over \kahler surfaces.
In~\cite{Cao15}, Cao proved approximation for \kahler manifolds $X$ of arbitrary dimension under additional curvature assumptions, e.g.~$-K_X$ hermitian semipositive or $T_X$ nef.

Since this work was first published as a preprint, there has been substantial progress in dimension three.
It is now known that any \kahler threefold of Kodaira dimension $\kappa = 0$ or $1$ (not assumed to be minimal) admits an algebraic approximation, by the work of Lin~\cite{Lin16, Lin17a}.
If $\kappa = 2$, algebraic approximability at least of a suitable bimeromorphic model was established by Claudon and H\"oring~\cite{ClaudonHoeringKahlerGroups}.
Finally, Lin also completely settled the uniruled case, i.e.~$\kappa = -\infty$~\cite{Lin17b}.
In particular, it is now clear that $\pi_1$ of any compact \kahler threefold is isomorphic to $\pi_1$ of a projective manifold, a result hinted at by our Corollary~\ref{cor:kahler groups}.

In contrast, these questions remain wide open in dimensions $\ge 4$, even if we assume that the MMP works.

\subsection{Acknowledgements}

I would like to thank Thomas Peternell for suggesting the topic of algebraic approximation to me, and for many fruitful discussions.
Furthermore I would like to thank Zsolt Patakfalvi, Junyan Cao and in particular Christian Lehn for interesting conversations and for answering my questions.
Last but not least, the anonymous referee's input has been very helpful both in simplifying some of the arguments and in improving the statement of Theorem~\ref{thm:alb cy3 intro} significantly.

\section{Notation and standard facts}

\subsection{Complex spaces}

All complex spaces are assumed to be separated, connected and reduced, unless otherwise stated.

A compact complex manifold $X$ is said to be \emph{of Fujiki class $\cC$} (or \emph{in $\cC$}, for short) if it is bimeromorphic to a compact \kahler manifold.
We say that $X$ is \emph{Moishezon} if its field of meromorphic functions $\sM(X)$ has maximal transcendence degree $\trdeg_\C \sM(X) = \dim X$. Being Moishezon is equivalent to being bimeromorphic to a projective manifold.

We denote by $\Omega_X^1$ the sheaf of \kahler differentials on a normal complex space $X$.
The tangent sheaf $\sT_X$ of $X$ is by definition the dual $\sHom_{\O_X}(\Omega_X^1, \O_X)$. It is a reflexive sheaf on $X$. In particular it coincides with $i_* \sT_{X^\sm}$, where $i\!: X^\sm \to X$ is the inclusion of the smooth locus.

By an \emph{analytic fibre bundle}, we mean a proper surjective map $f\!: X \to Y$ of complex spaces such that every point $y \in Y$ has an open neighborhood $U \subset Y$ with $f^{-1}(U) \isom U \x f^{-1}(y)$ over $U$.
A proper map $f\!: X \to Y$ is called a \emph{fibre space} if it is surjective with connected fibres.

A \emph{resolution of singularities} of $X$ is a proper bimeromorphic morphism $f\!: \wt X \to X$, where $\wt X$ is smooth. We say that the resolution is \emph{projective} if $f$ is a projective morphism. In this case, if $X$ is compact \kahler then so is $\wt X$.
Any compact space $X$ has a projective resolution by~\cite[Thm.~3.45]{Kol07}.

We denote \Q-linear equivalence of Weil divisors by the symbol $\sim_\Q$.
If $X$ is compact and $\sF$ is a coherent sheaf on $X$, we write $h^i(X, \sF) := \dim_\C H^i(X, \sF)$.

\subsection{\kahler spaces} \label{sec:kahler spaces}

While we will not work directly with the definition of a singular \kahler space, we include the definition here for the reader's convenience.

\begin{dfn}[\kahler space] \label{dfn:kahler}
Let $X$ be a normal complex space. A \emph{\kahler form} $\omega$ on $X$ is a \kahler form $\omega^\circ$ on the smooth locus $X_\sm \subset X$ such that $X$ can be covered by open sets $U_\alpha$ with the following property: there is an embedding $U_\alpha \inj W_\alpha$ of $U_\alpha$ as an analytic subset of an open set $W_\alpha \subset \C^{n_\alpha}$ and a strictly plurisubharmonic $\sC^\infty$ function $f_\alpha\!: W_\alpha \to \R$ such that
\[ \omega^\circ\big|_{U_\alpha \cap X_\sm} = \big( \i\del\bar\del f_\alpha \big) \big|_{U_\alpha \cap X_\sm}. \]
A normal complex space $X$ is said to be \emph{\kahler} if there exists a \kahler form on $X$.
\end{dfn}

In particular, the analytification of a normal complex projective variety is a \kahler space.

\begin{dfn}[Nef line bundles]
A line bundle $L$ on a compact \kahler manifold $X$ is said to be \emph{nef} if $c_1(L)$ is contained in the closure of the open cone $\cK \subset H^{1,1}(X) \cap H^2(X, \R)$ generated by the cohomology classes of \kahler forms. For the technically correct definition of nefness on singular \kahler spaces, see~\cite[Def.~3.10]{HP15}.
\end{dfn}

\subsection{Singularities of the MMP} \label{sec:MMP sings}

For the definition of terminal, canonical and klt singularities we refer to~\cite[Def.~2.34]{KM98}. We have the following chain of implications:
\[ \text{terminal} \imp \text{canonical} \imp \text{klt} \imp \text{rational}. \]
The first two hold by definition and the third one is~\cite[Thm.~5.22]{KM98}.
For a klt space $X$, the canonical sheaf $\omega_X$ is \Q-Cartier, i.e.~some reflexive tensor power $\omega_X^{[m]} := \big( \omega_X^{\tensor m} \big)^{**}$ is locally free, where $m > 0$. In particular, the first Chern class of $X$ is well-defined. Namely, set
\[ c_1(X) := - \frac 1 m c_1 \big( \omega_X^{[m]} \big) \in H^2(X, \Q). \]
Although the canonical sheaf need not come from a Weil divisor, we will abuse notation and write $mK_X$ for $\omega_X^{[m]}$.

The normal complex space $X$ is said to be \emph{\Q-factorial} if for every Weil divisor $D$ on $X$ there exists an integer $m > 0$ such that the sheaf $\O_X(mD)$ is locally free, and additionally $\omega_X$ is \Q-Cartier.

\begin{dfn}[Quasi-\'etale maps]
A finite surjective map $f\!: X \to Y$ of normal complex spaces is said to be \emph{quasi-\'etale} if for some open subset $Y^\circ \subset Y$ with $\codim_Y (Y \minus Y^\circ) \ge 2$, the restriction of $f$ to $f^{-1}(Y^\circ) \to Y^\circ$ is \'etale.
\end{dfn}

\begin{dfn}[Irregularity] \label{dfn:irregularity}
The \emph{irregularity} of a compact complex space $X$ is $q(X) := h^1 \big( \wt X, \O_{\wt X} \big)$, where $\wt X \to X$ is a resolution of singularities.
The \emph{augmented irregularity} of $X$ is
\[ \wt q(X) := \max \big\{ q(\wt X) \;\big|\; \wt X \to X \text{ finite quasi-\'etale} \big\} \in \N_0 \cup \{ \infty \}. \]
\end{dfn}

\begin{rem}[Finiteness of $\wt q$] \label{rem:wt q finite}
If $X$ is a \cy threefold, then $X$ has rational singularities and by Theorem~\ref{thm:alb cy3 intro} the Albanese map of $X$ is surjective. Hence $q(X) \le 3$.
Moreover, any finite quasi-\'etale cover $\wt X$ of $X$ is again a \cy threefold~\cite[Prop.~5.20]{KM98}. Summarizing, we see that $\wt q(X) \le 3$.
\end{rem}

\begin{rem}[Spaces with $\wt q = 0$] \label{rem:wt q = 0}
A \cy threefold $X$ with $\wt q(X) = 0$ is necessarily projective. Namely, the index one cover $X_1$ of $X$~\cite[Def.~5.19]{KM98} satisfies $q(X_1) = 0$ and hence it is projective by the argument in the proof of Theorem~\ref{thm:cy3 K=0}. Then $X$, being a finite quotient of $X_1$, is also projective.
\end{rem}

\subsection{Deformation theory} \label{sec:deformation theory}

We collect some notation and basic facts from deformation theory.

\begin{dfn}[Deformations of complex spaces]
Let $X$ be a compact complex space. A \emph{deformation} of $X$ is a proper flat morphism $\mf X \to (B, 0)$ from a (not necessarily reduced) complex space $\mf X$ to a complex space germ $(B, 0)$, together with the choice of an isomorphism $\mf X_0 \isom X$.
We usually suppress both the base point $0 \in B$ and the choice of isomorphism from notation.
A deformation over $B = S_1 := \Spec \C[\eps]/(\eps^2)$ is called a \emph{first-order (infinitesimal) deformation}.
\end{dfn}

\begin{dfn}[Locally trivial deformations]
A deformation $\pi\!: \mf X \to B$ is called \emph{locally trivial} if for every $x \in \mf X_0$ there exist open subsets $0 \in B^\circ \subset B$ and $x \in U \subset \pi^{-1}(B^\circ)$ and an isomorphism
\[ \xymatrix{
U \ar^-\sim[rr] \ar_-\pi[dr] & & (\mf X_0 \cap U) \x B^\circ \ar^-{\pr_2}[dl] \\
& B^\circ. &
} \]
\end{dfn}

This definition is in obvious conflict with the usual notion of local triviality (i.e.~local on the base). Nevertheless, the terminology is standard (see e.g.~\cite{FK87, Ser06}) and we trust that it will not lead to any confusion.

\begin{dfn}[Versal and miniversal deformations]
A deformation $\mf X \to B$ of $X$ is called \emph{versal} if every deformation $\mf X' \to B'$ of $X$ can be obtained from it by a base change $\mu\!: B' \to B$.
It is called \emph{miniversal} (or \emph{semiuniversal}) if additionally the differential $\d_0 \mu$ is uniquely determined.
Ditto for locally trivial deformations.
\end{dfn}

We denote by $\Def(X)$ the miniversal deformation space and by $\Deflt(X)$ the miniversal locally trivial deformation space of $X$.
These exist by~\cite[Hauptsatz, p.~140]{Gra74} and~\cite[Cor.~0.3]{FK87}, respectively. Furthermore $\Deflt(X)$ is a subgerm of $\Def(X)$ by the proof of~\cite[Cor.~0.3]{FK87}.

To a first-order deformation $\pi\!: \mf X \to S_1$ one can associate its \emph{Kodaira--Spencer class} in $\Ext^1(\Omega_X^1, \O_X)$, which is the extension class of the conormal sequence
\[ 0 \lto \O_X \lto \Omega_{\mf X}^1\big|_{X_0} \stackrel{\pi^*}{\lto} \Omega_X^1 \lto 0. \]
First-order deformations are classified by their Kodaira--Spencer class, i.e.~we have $T_0 \Def(X) = \Ext^1(\Omega_X^1, \O_X)$~\cite[Thm.~1.1.10]{Ser06}.
Under this correspondence, locally trivial deformations correspond to locally split extensions of $\Omega_X^1$ by $\O_X$, that is, $T_0 \Deflt(X) = H^1(X, \sT_X)$~\cite[Prop.~1.2.9]{Ser06}. The inclusion $T_0 \Deflt(X) \subset T_0 \Def(X)$ is given by the Grothendieck spectral sequence
\[ E_2^{p,q} = H^p \big( X, \sExt^q(\Omega_X^1, \O_X) \big) \Rightarrow \Ext^{p+q}(\Omega_X^1, \O_X). \]
The \v Cech cohomological description of the Kodaira--Spencer class of a locally trivial first-order deformation works the same way as it does for deformations of complex manifolds~\cite[Sec.~9.1.2]{Voi02}.
For a locally trivial deformation $\pi\!: \mf X \to B$ over an arbitrary base $B$, we obtain correspondingly a \emph{Kodaira--Spencer map} $\kappa(\pi)\!: T_0 B \to H^1(X, \sT_X)$.

\begin{dfn}[Algebraic approximations] \label{dfn:alg approx}
Let $X$ be a compact complex space and $\pi\!: \mf X \to B$ a deformation of $X$.
Consider the set of projective fibres
\[ B^{\mathrm{alg}} := B_\pi^{\mathrm{alg}} := \big\{ t \in B \;\big|\; \mf X_t \text{ is projective} \big\} \subset B \]
and its closure $\overline{B^{\mathrm{alg}}} \subset B$.
\begin{enumerate}
\item We say that $\mf X \to B$ is an \emph{algebraic approximation of $X$} if $0 \in \overline{B^{\mathrm{alg}}}$.
\item We say that $\mf X \to B$ is a \emph{strong algebraic approximation of $X$} if $\overline{B^{\mathrm{alg}}}$ contains an open neighborhood of $0 \in B$.
\end{enumerate}
\end{dfn}

\PreprintAndPublication{
\section{The Albanese map of a threefold with \texorpdfstring{$\kappa = 0$}{kappa = 0}} \label{sec:alb kappa=0}

The following result was proved by Ueno~\cite[Main Thm.~I]{Uen87} for all possible values of $q(X)$ except $q(X) = 2$. Hence in the proof we will concentrate on this particular case.

\begin{thm} \label{thm:alb bim bundle}
Let $X$ be a compact complex manifold of Fujiki class $\cC$ with $\dim X = 3$ and $\kappa(X) = 0$.
Then the Albanese map $\alpha\!: X \to A := \Alb(X)$ is bimeromorphic to a surjective analytic fibre bundle with connected fibre, i.e.~there is a diagram
\[ \xymatrix{
X \ar_-\alpha[dr] \ar@{-->}^-\pi[rr] & & Y \ar^-f[dl] \\
& A &
} \]
where $Y$ is a compact complex manifold, $\pi$ is bimeromorphic and $f$ is an analytic fibre bundle with $f_* \O_Y = \O_A$.
\end{thm}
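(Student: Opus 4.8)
The plan is to dispose of the only case left open by Ueno's theorem, namely $q(X) = 2$, by studying the elliptic fibration that the Albanese map then defines over a two-dimensional torus.

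First I would record the reduction. For $X$ in class $\cC$ with $\kappa(X) = 0$ it is classical that the Albanese map is surjective, so $q(X) = \dim \Alb(X) \le \dim X = 3$; thus $q(X) \in \{0,1,2,3\}$, and for every value except $q(X) = 2$ the assertion is exactly Ueno's theorem. So I assume $q(X) = 2$ and write $A = \Alb(X)$, a complex torus with $\dim A = 2$. After passing to the Stein factorization I may assume $\alpha\!: X \to A$ is a fibre space (surjective with connected fibres), and its general fibre $F$ is then a smooth compact curve. To identify $F$, note that $\kappa(X) = 0$ forces $X$ to be non-uniruled, so $F$ is not rational and $\kappa(F) \ge 0$; on the other hand subadditivity of the Kodaira dimension over a torus base (Kawamata's $C_{n,m}$ for maps to complex tori) gives $0 = \kappa(X) \ge \kappa(A) + \kappa(F) = \kappa(F)$. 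Hence $\kappa(F) = 0$, so $F$ is an elliptic curve $E$ and $\alpha$ is an elliptic fibre space over $A$.

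Next I would pass to a relatively minimal elliptic fibration $f\!: Y \to A$ bimeromorphic to $\alpha$ (resolving indeterminacy and contracting the fibral $(-1)$-structure) and apply the canonical bundle formula of Kodaira--Fujiki--Kawamata,
\[ K_Y \sim_\Q f^*\big( K_A + L_f + B_f \big), \]
where $L_f$ is the nef moduli part and $B_f \ge 0$ is the boundary recording the multiple fibres. Since $K_A = 0$, since $\kappa(Y) = \kappa(X) = 0$, and since $f$ has connected fibres, this gives $\kappa\big(A, L_f + B_f\big) = 0$. But a nonzero effective divisor on a torus has Iitaka dimension at least one (it becomes ample on the quotient by its stabilizer subtorus); combining this with the nefness of $L_f$ and a \kahler class on $A$ forces $B_f = 0$ and $L_f \equiv 0$. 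Thus $f$ has no multiple fibres, its discriminant divisor (whose class is a multiple of $L_f$) is empty so $f$ is smooth in codimension one, and $L_f \equiv 0$ means $f$ is isotrivial with constant $j$-invariant.

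Finally I would upgrade isotriviality to a fibre-bundle structure. The degeneracy locus $Z \subset A$ has codimension at least two, so $\pi_1(A \minus Z) = \pi_1(A)$ and, by Fischer--Grauert, the smooth part is an analytic fibre bundle with fibre $E$ whose monodromy has finite image in $\Aut(E,0)$. Letting $A' \to A$ be the finite \'etale cover of tori trivializing this finite part, with Galois group $G$, the pullback $Y \x_A A'$ is bimeromorphic to $A' \x E$, so $X$ is bimeromorphic to $(A' \x E)/G$. As $G$ acts freely on $A'$ and by automorphisms on $E$, this quotient is a compact complex manifold $Y$, and the induced map $Y = (A' \x E)/G \to A'/G = A$ is an analytic fibre bundle with connected fibre $E$, hence $f_* \O_Y = \O_A$, as required. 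The main obstacle is precisely this middle circle of ideas: making the canonical bundle formula and the subadditivity inequality available in the non-projective class $\cC$ setting, and ensuring that after a bimeromorphic modification one obtains a genuine locally trivial bundle over all of $A$ rather than merely an isotrivial family with degenerate fibres over $Z$.
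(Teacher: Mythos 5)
Your outline shares the paper's skeleton (reduce to $q(X)=2$ via Ueno, identify the fibre as an elliptic curve, rule out degenerate and multiple fibres using the fact that a nonzero effective divisor on a torus moves, pass to a finite \'etale cover, descend), and your Theorem-of-the-Square argument for killing divisorial components of the bad locus is literally the one in the paper. But two of your middle steps are genuine gaps, and one assertion is false as stated.

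First, the canonical bundle formula $K_Y \sim_\Q f^*(K_A + L_f + B_f)$ with a \emph{nef} moduli part is not available off the shelf for an elliptic fibration of a class $\cC$ threefold over a non-algebraic $2$-torus; you flag this yourself, but the proof cannot rest on it. The paper avoids it entirely: it first passes to a good bimeromorphic model $X_0$ using Ueno's Corollary~1.10 (no $(-1)$-curves in transverse elliptic-surface slices) and then invokes only the inequality $\kappa(X_0)\ge\kappa(A,\O_A(D))$ for a divisorial component $D$ of the non-smooth locus (Ueno's Theorem~2.4), which is proved in exactly this class $\cC$ setting; combined with the Theorem of the Square this already yields smoothness of the Albanese map outside a finite subset of $A$.

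Second, and more seriously, your claim that after the monodromy-killing cover ``the pullback $Y\x_A A'$ is bimeromorphic to $A'\x E$'' is unjustified and in general false: a smooth isotrivial elliptic fibration with trivial $j$-monodromy is a torsor under a constant elliptic curve, classified by a generally nontrivial class (already a $3$-torus surjecting onto a $2$-torus gives a non-product principal $E$-bundle). Relatedly, Fischer--Grauert only gives a bundle structure over $A\minus Z$, and you never explain how to extend it across the codimension-two locus $Z$ --- which is precisely the hard point of the theorem. The paper's device for both issues is Fujiki's additivity of irregularity for isotrivial fibre spaces: the finite cover $\wt A\to A$ is chosen so that $q(\wt X)=q(\wt A)+q(E)=3$, whence Ueno's theorem makes the Albanese map of $\wt X$ a \emph{bimeromorphic} map onto a $3$-torus $Y$, and the induced map $Y\to\wt A$ is a surjective morphism of complex tori --- automatically an analytic fibre bundle over all of $\wt A$, with no torsor analysis and no extension problem. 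The descent to $A=\wt A/G$ then works as you describe (bimeromorphic self-maps of $Y$ are biholomorphic since a torus contains no rational curves). To repair your argument you would either have to prove the torsor extension/triviality statements directly, or substitute Fujiki's proposition as the paper does.
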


\subsection{A result of Fujiki}

In the proof of Theorem~\ref{thm:alb bim bundle}, we will need a proposition of Fujiki~\cite[Prop.~4.5]{Fuj83} about the irregularity of an isotrivial fibre space.
As the availability of~\cite{Fuj83} is somewhat limited, we have chosen to reproduce the statement in question here for the reader's convenience.
By $D / \Gamma$ we denote the period domain of weight $1$ integral Hodge structures.

\begin{prp}[Additivity of irregularity] \label{prp:Fuj83}
Let $f\!: X \to Y$ be a fibre space of compact complex manifolds in $\cC$. Let $U \subset Y$ be a Zariski open subset over which $f$ is smooth. Suppose that the period map $\Phi\!: U \to D/\Gamma$ associated to $f_U$ is constant. Then there exist a normal compact complex space $\wt Y$ and a finite covering $\wt Y \to Y$ which is \'etale over $U$ such that if $\wt X := X \x_Y \wt Y$, then $q(\wt X) = q(X_y) + q(\wt Y)$ for any $y \in U$.
\end{prp}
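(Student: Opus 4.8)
The plan is to exploit the constancy of the period map to trivialize the monodromy of the weight-one variation, and then to read off the additivity from the five-term exact sequence of the Leray spectral sequence, the decisive input being Deligne's global invariant cycle theorem. First I would construct $\wt Y$. Since $\Phi\colon U \to D/\Gamma$ is constant its image is a single point, so the monodromy representation $\pi_1(U) \to \Gamma$ lands in the stabilizer of a point of $D$; that stabilizer is compact while $\Gamma$ is discrete, so the monodromy group is finite. Let $\wt U \to U$ be the finite \'etale cover trivializing it, and take $\wt Y$ to be the normalization of $Y$ in the function field of $\wt U$. Then $\wt Y \to Y$ is finite and restricts to $\wt U \to U$, and with $\wt f\colon \wt X = X \x_Y \wt Y \to \wt Y$ the local system $R^1 \wt f_* \Q$ is trivial on $\wt U$. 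Because $q$ is computed on a resolution and is a bimeromorphic invariant, and because $\wt X$, $\wt Y$ and the smooth fibre $X_y$ all lie in class $\cC$, I may pass to smooth K\"ahler models; by Hodge symmetry in degree one this gives $q = \tfrac12 b_1$ throughout, so it suffices to prove $b_1(\wt X) = b_1(X_y) + b_1(\wt Y)$.

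For the main computation I would use the five-term exact sequence of the Leray spectral sequence of $\wt f$ with $\Q$-coefficients,
\[ 0 \to H^1(\wt Y, \Q) \xrightarrow{\;\wt f^*\;} H^1(\wt X, \Q) \xrightarrow{\;e\;} H^0\big(\wt Y, R^1 \wt f_* \Q\big) \xrightarrow{\;d_2\;} H^2(\wt Y, \Q). \]
Here $\wt f^*$ is the bottom edge map and is injective because the fibres are connected (so $R^0\wt f_*\Q = \Q$); hence $b_1(\wt X) = b_1(\wt Y) + \dim \img(e)$, and the problem reduces to showing $\dim \img(e) = b_1(X_y)$. Restricting the edge map to $\wt U$ and evaluating the trivial local system at $y$ identifies the composite $H^1(\wt X,\Q) \to H^0(\wt U, R^1\wt f_*\Q) \xrightarrow{\sim} H^1(X_y,\Q)$ with the restriction-to-the-fibre map $r$.

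One inequality is then immediate from Deligne's global invariant cycle theorem: applied to the surjective morphism $\wt f$ from the smooth K\"ahler manifold $\wt X$, it shows that $r$ surjects onto the monodromy invariants $H^1(X_y,\Q)^{\pi_1(\wt U,y)}$, which by the trivialization in the first step is all of $H^1(X_y,\Q)$. Therefore $\dim\img(e) \ge \dim\img(r) = b_1(X_y)$.

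The hard part will be the reverse inequality, equivalently the statement that $e$ and $r$ have the same kernel, so that no cohomology of the degenerate fibres over $\wt Y \minus \wt U$ contributes. A class $\beta$ with $r(\beta)=0$ yields a section $e(\beta)$ of $R^1\wt f_*\Q$ vanishing on the connected set $\wt U$, hence supported on the boundary $\wt Y \minus \wt U$; one must show that such a boundary-supported section lying in the image of $e$ is zero, and correspondingly that $d_2$ annihilates the invariant classes. I expect this to be the genuine obstacle: it amounts to controlling the specialization maps from the singular fibres and the strictness of the Leray filtration on $H^1$, for which the clean tool is again the Hodge theory of the smooth K\"ahler model (degeneration of the Leray sequence over the smooth part, together with the local invariant cycle theorem near the bad fibres). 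Granting this, $\ker e = \ker r = \wt f^* H^1(\wt Y,\Q)$, whence $\dim\img(e)=b_1(X_y)$ and the additivity $q(\wt X)=q(X_y)+q(\wt Y)$ follows.
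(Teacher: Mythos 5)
The paper does not actually prove this proposition: it is reproduced verbatim from Fujiki for the reader's convenience, and the proof given in the text is the single line ``See~\cite[Prop.~4.5]{Fuj83}''. So there is no in-paper argument to compare yours against, and I can only assess your sketch on its own terms. The skeleton is reasonable and standard: constancy of the period map forces the monodromy image into the intersection of the discrete group $\Gamma$ with a compact stabilizer, hence it is finite; extending the trivializing \'etale cover of $U$ to a normal finite cover $\wt Y \to Y$ (Grauert--Remmert, rather than ``normalization in the function field'', since nothing here is algebraic) is fine; and the five-term Leray sequence together with the global invariant cycle theorem does give the inequality $b_1(\wt X) \ge b_1(\wt Y) + b_1(X_y)$.

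The problem is that you explicitly concede the decisive step: the reverse inequality, equivalently $\ker r = \wt f^* H^1(\wt Y, \Q)$, is introduced with ``Granting this'', and the tools you name for it (local invariant cycle theorem, specialization from the singular fibres) are not the ones that close it. What does close it is strictness of morphisms of mixed Hodge structures for the weight filtration: restricting a class $\beta$ with $r(\beta) = 0$ to the part $V^\circ$ lying over the smooth locus $W^\circ$, the (always exact) five-term sequence there gives $\beta|_{V^\circ} = g^*\gamma$ with $\gamma \in H^1(W^\circ, \Q)$; since $\beta|_{V^\circ}$ comes from the compact model it lies in $W_1 H^1(V^\circ, \Q)$, so by strictness and injectivity of $g^*$ one gets $\gamma \in W_1 H^1(W^\circ,\Q) = \img\big(H^1(W,\Q)\big)$, and then $\beta \in g^* H^1(W,\Q)$ because $H^1(V,\Q) \to H^1(V^\circ,\Q)$ is injective. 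Until something of this sort is supplied the proposal is not a proof. Two further points need attention even then: $\wt X = X \x_Y \wt Y$ is in general a singular complex space, not a smooth K\"ahler manifold, so $H^1(\wt X, \Q)$ need not compute $2q(\wt X)$ (the irregularity is defined via a resolution, and $H^1$ with $\Q$-coefficients can change under resolution of a normal space); and the global invariant cycle theorem and the Hodge-theoretic degenerations you invoke hold in class $\cC$ only in Fujiki's extended form, not verbatim as Deligne's K\"ahler statements. These are repairable, but as written they are not addressed.
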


\begin{proof}
See~\cite[Prop.~4.5]{Fuj83}.
\end{proof}

\subsection{The case of irregularity two}

As already remarked, we are mainly interested in the situation where $q(X) = 2$.
The following proposition says that in this case Theorem~\ref{thm:alb bim bundle} is true after a finite \'etale base change.

\begin{prp} \label{prp:alb bim bundle}
Let $X$ be a compact complex manifold in class $\cC$ with $\dim X = 3$, $\kappa(X) = 0$, and $q(X) = 2$. Let $\alpha\!: X \to A := \Alb(X)$ be the Albanese map of $X$. Then there is a diagram
\begin{sequation} \label{eqn:prp alb bim bundle}
\xymatrix{
  Y \ar_-f[drr] & & \wt X \ar@{-->}_-{\pi, \textit{ bimerom.}}[ll] \ar[rr] \ar[d] & & X \ar^-{\alpha}[d] \\
  & & \wt A \ar^-{\beta, \textit{ finite \'etale}}[rr] & & A
}
\end{sequation}%
where $Y$ is a complex torus, $\wt X = X \x_A \wt A$, and $f$ is an analytic fibre bundle with $f_* \O_Y = \O_{\wt A}$.
\end{prp}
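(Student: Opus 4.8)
The plan is to show that the general fibre of $\alpha$ is an elliptic curve, that the resulting fibration is isotrivial with no degenerations in codimension one, and then to feed this into Fujiki's additivity result (Proposition~\ref{prp:Fuj83}). Since $q(X)=2$ we have $\dim A = 2$, so a general fibre $F$ of $\alpha$ is a curve. First I would check that $\alpha$ is surjective: its image generates $A$, and if $\dim\alpha(X)=1$ then $\alpha(X)$ would be a curve of genus $\ge 2$ (a rational or elliptic curve cannot generate the two-dimensional torus $A$) over which $X$ has surface fibres; these are non-uniruled because $X$ is, hence of Kodaira dimension $\ge 0$, and additivity over the base curve would give $\kappa(X)\ge 1$, a contradiction. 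The genus of $F$ is pinned down the same way: genus $0$ is impossible since $\kappa(X)=0$ forbids uniruledness, while genus $\ge 2$ would give $\kappa(X)\ge \kappa(F)+\kappa(A)=1$ by additivity of the Kodaira dimension over the torus $A$. Hence $F$ is a smooth genus-one curve and $\alpha$ is an elliptic fibration over a Zariski-open $U\subset A$.

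The crux, and the step I expect to be the main obstacle, is to prove that the period map $\Phi\colon U\to D/\Gamma$ is constant and, moreover, that $\alpha$ has no singular fibres in codimension one. After passing to a relatively minimal model of $\alpha$ (a bimeromorphic modification, harmless since the Albanese map is a bimeromorphic invariant), the canonical bundle formula for elliptic fibrations reads $K_X\sim_\Q \alpha^*\big(K_A+L+B\big)$, where $L$ is the modular part governed by the $j$-invariant and $B\ge 0$ collects the contributions of the singular and multiple fibres. Since $A$ is a torus we have $K_A\equiv 0$, so $\kappa(X)=\kappa\big(A,\,L+B\big)$. As $L+B$ is an effective $\Q$-divisor and $\kappa(X)=0$, it must be numerically trivial, hence zero on the torus $A$. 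Vanishing of $L$ says exactly that $\Phi$ is constant, and vanishing of $B$ says that $\alpha$ has no multiple fibres and no singular fibres over a divisor; thus $\alpha$ is smooth over an open set $U$ with $\codim_A(A\setminus U)\ge 2$. Running this argument in the possibly non-algebraic, class-$\cC$ setting — where the analytic canonical bundle formula and the relative minimal model have to be available — is the delicate point.

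With $\Phi$ constant I would apply Proposition~\ref{prp:Fuj83} to $\alpha$, obtaining a finite cover $\beta\colon\wt A\to A$ that is étale over $U$ and satisfies $q(\wt X)=q(F)+q(\wt A)=1+q(\wt A)$ for $\wt X=X\x_A\wt A$. Because $\beta$ is étale over $U$ with $\codim_A(A\setminus U)\ge 2$, and $\wt A$ is normal while $A$ is smooth, purity of the branch locus forces $\beta$ to be finite étale; hence $\wt A$ is again a complex torus, $q(\wt A)=2$, and $q(\wt X)=3=\dim\wt X$. Finiteness and étaleness of $\beta$, and therefore of $\wt X\to X$, give that $\wt X$ is of class $\cC$ with $\kappa(\wt X)=\kappa(X)=0$. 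A class-$\cC$ manifold of Kodaira dimension zero whose Albanese dimension equals its dimension is bimeromorphic to its Albanese torus; applied to a resolution of $\wt X$ this yields the bimeromorphic map $\pi\colon\wt X\dashrightarrow Y:=\Alb(\wt X)$ onto a three-dimensional torus. Finally, functoriality of the Albanese realises the projection $\wt X\to\wt A$ as a surjective homomorphism of tori $f\colon Y\to\wt A$ (up to translation), which is automatically an analytic fibre bundle with connected fibres; in particular $f_*\O_Y=\O_{\wt A}$, completing the diagram.
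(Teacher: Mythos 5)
Your overall skeleton coincides with the paper's: show the fibres are elliptic and the family isotrivial, show $\alpha$ degenerates only in codimension two on $A$, feed this into Fujiki's additivity (Proposition~\ref{prp:Fuj83}) to get $q(\wt X)=3$ after an \'etale cover (using purity of the branch locus exactly as you do), and then invoke the fact that a class~$\cC$ threefold with $\kappa=0$ and $q=3$ is bimeromorphic to its Albanese torus, with $f$ obtained from the universal property of $\Alb(\wt X)$. The endgame of your argument is essentially identical to the paper's.

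The genuine gap is in the step you yourself flag as delicate. You derive both isotriviality and smoothness in codimension one from a canonical bundle formula $K_X\sim_\Q\alpha^*(K_A+L+B)$ after ``passing to a relatively minimal model of $\alpha$.'' Neither ingredient is available off the shelf here: the existence of a relative minimal model for an elliptic fibration of a class~$\cC$ threefold over a possibly non-algebraic two-torus is not a routine reduction, and the canonical bundle formula with $L$ and $B$ \emph{separately} effective (which you need in order to conclude $L=0$ and $B=0$ individually from $\kappa(A,L+B)=0$) is established in the literature in the projective or at best elliptic-surface setting, not for analytic elliptic threefolds over a non-algebraic base. The paper avoids this entirely by quoting Ueno: Theorems~2.1 and~2.2 of \cite{Uen87} (the $C^+_{3,2}$ conjecture for class~$\cC$) give $\kappa(F)=0$ \emph{and} variation zero in one stroke, so isotriviality never passes through a canonical bundle formula; and Corollary~1.10 plus Theorem~2.4 of \cite{Uen87} supply, after a bimeromorphic modification removing $(-1)$-curves from the elliptic surfaces over transverse arcs, precisely the inequality $\kappa(X_0)\ge\kappa\bigl(A,\O_A(D)\bigr)$ for any divisorial component $D$ of the degeneration locus. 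The final contradiction is then the same torus trick you use implicitly (the Theorem of the Square forces $\kappa\bigl(A,\O_A(D)\bigr)\ge 1$ for $D$ effective nonzero). So your plan is salvageable, but as written the central step rests on machinery whose availability in this analytic setting is exactly what needs proving; replacing it with the cited results of Ueno is what makes the argument go through.
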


\begin{proof}
By~\cite[Main Thm.~I]{Uen87}, the map $\alpha$ is surjective and has connected fibres.
Let $F \subset X$ be a smooth fibre of $\alpha$. By the easy addition formula, we have $\kappa(F) \ge 0$.
Then by the solution of the Iitaka--Viehweg $C_{3,2}^+$ conjecture~\cite[Thms.~2.1 and 2.2]{Uen87}, it follows that $\kappa(F) = 0$ and the family given by $\alpha$ has variation zero. This means that all smooth fibres of $\alpha$ are isomorphic to some fixed elliptic curve $E$. 

\begin{clm} \label{clm:X0}
There exists a compact complex manifold $X_0$ bimeromorphic to $X$ such that the Albanese map $X_0 \to \Alb(X_0) = A$ is smooth outside a codimension two (i.e.~finite) subset of $A$.
\end{clm}

\begin{proof}[Proof of Claim~\ref{clm:X0}]
First note that for any bimeromorphic map $\phi\!: X \dashrightarrow X_0$ the composition $\alpha \circ \phi^{-1}\!: X_0 \dashrightarrow X \to A$ is holomorphic, as $A$ does not contain any rational curves. In fact, $A = \Alb(X_0)$ and $\alpha \circ \phi^{-1}$ is the Albanese map of $X_0$.

By~\cite[Cor.~1.10]{Uen87}, there is a bimeromorphic map $\phi\!: X \dashrightarrow X_0$ such that $\alpha_0 = \alpha \circ \phi^{-1}$ has the following property:
Let $U \subset A$ be the largest open set such that $\alpha_0^{-1}(U) \to U$ is smooth, and let $D \subset A$ be a divisorial component of $A \minus U$. If $H \subset A$ is a non-singular analytic arc (i.e.~the image of the unit disc under a holomorphic embedding) that intersects $D$ transversely in a single smooth point and $\alpha_0^{-1}(H)$ is smooth, then the elliptic surface $\alpha_0^{-1}(H) \to H$ does not contain any $(-1)$-curves.

In this situation we know by~\cite[Thm.~2.4]{Uen87} that if $A \minus U$ actually contains an irreducible divisor $D$, then
\begin{sequation} \label{eqn:can bdl formula}
\kappa(X_0) \ge \kappa \big( A, \O_A(D) \big).
\end{sequation}%
By the Theorem of the Square~\cite[Thm.~2.3.3]{BL04}, for any $a \in A$ we have
\[ \tau_a^* D + \tau_{-a}^* D \in |2D|, \]
where $\tau_a\!: A \to A$ denotes translation by $a$.
Clearly $\tau_a^* D \ne D$ for general $a \in A$, thus $h^0 \big( A, \O_A(2D) \big) \ge 2$ and $\kappa \big( A, \O_A(D) \big) \ge 1$.
Together with~\eqref{eqn:can bdl formula}, this contradicts the fact that $\kappa(X_0) = 0$. Claim~\ref{clm:X0} is proven.
\end{proof}

The manifold $X_0$ is again in class $\cC$, and for any finite \'etale base change $\wt A \to A$ the fibre product $X_0 \x_A \wt A$ will be bimeromorphic to $X \x_A \wt A$.
Hence we may replace $X$ by $X_0$ and make the following additional assumption without loss of generality.

\begin{awlog} \label{awlog:alpha smooth}
The Albanese map $\alpha\!: X \to A$ is smooth outside a codimension two subset of $A$.
\end{awlog}

As the family defined by $\alpha$ has variation zero, by Proposition~\ref{prp:Fuj83} there is a finite cover $\beta\!: \wt A \to A$, \'etale over the smooth locus of $\alpha$, such that for $\wt X := X \x_A \wt A$ we have $q(\wt X) = q(\wt A) + q(E)$.
The cover $\beta$ is \'etale by Assumption~\ref{awlog:alpha smooth} and purity of branch locus~\cite[Theorem]{Nag59}.
Hence $\wt A$ is again a two-dimensional complex torus and we get $q(\wt X) = 3$.
As the manifold $\wt X$ still belongs to the class $\cC$, by~\cite[Main Thm.~I]{Uen87} the Albanese map $\pi\!: \wt X \to Y := \Alb(\wt X)$ is bimeromorphic. By the universal property of the Albanese torus (Definition~\ref{dfn:alb}), $\wt X \to \wt A$ factors through $Y$, yielding the following diagram.
\[ \xymatrix{
Y \ar_-f[drr] & & \wt X \ar_-\pi[ll] \ar[rr] \ar[d] & & X \ar^-{\alpha}[d] \\
& & \wt A \ar^-\beta[rr] & & A
} \]
The map $f$, being a surjective morphism of complex tori, clearly is a fibre bundle. Furthermore $f$ has connected fibres since $\alpha$ does. This ends the proof.
\end{proof}

\subsection{Proof of Theorem~\ref{thm:alb bim bundle}}

By~\cite[Main Thm.~I]{Uen87}, the map $\alpha$ is surjective with connected fibres. Hence $0 \le q(X) \le 3$.
If $q(X) \in \{ 0, 1, 3 \}$, then the result holds by~\cite[Main Thm.~I]{Uen87}.
Hence we will assume from now on that $q(X) = 2$.

Consider the diagram~\eqref{eqn:prp alb bim bundle} given by Proposition~\ref{prp:alb bim bundle}.
After fixing base points, the \'etale cover $\beta\!: \wt A \to A$ corresponds to a transitive $\pi_1(A)$-action on some finite set $S$, i.e.~a group homomorphism $\rho\!: \pi_1(A) \to \Aut(S)$. The subgroup $\pi_1(\wt A) \subset \pi_1(A)$ is the stabilizer of the basepoint of $\wt A$. The kernel $\ker(\rho)$ is a finite index \emph{normal} subgroup of $\pi_1(A)$ contained in $\pi_1(\wt A)$. It corresponds to a finite Galois cover $\wh A \to A$ that factors through $\beta$.
Replacing $\wt A$ by $\wh A$, we may assume that $\beta$ is Galois. (Actually, the covering constructed by Fujiki in the proof of Proposition~\ref{prp:Fuj83} is already Galois.)

\begin{awlog} \label{awlog:Galois}
There is a finite group $G$ acting fixed point freely on $\wt A$ such that $A = \wt A / G$ and $\beta\!: \wt A \to A$ is the quotient map.
\end{awlog}

We will consider $G$ as a subgroup of $\Aut(\wt A)$.
Every element $\sigma \in G$ induces an automorphism $\wt\sigma := \id_X \x_A \sigma$ of $\wt X$. Hence $G$ acts on $\wt X$ such that $\wt X \to \wt A$ is equivariant and $X = \wt X / G$.
Every $\sigma \in G$ also induces a bimeromorphic automorphism $\sigma_Y := \pi \circ \wt\sigma \circ \pi^{-1}$ of $Y$.
But $Y$, being a complex torus, does not contain any rational curves.
Hence $\sigma_Y \in \Aut(Y)$, i.e.~$\sigma_Y$ is in fact biholomorphic.

This implies that $G$ acts on $Y$ in such a way that $f$ is equivariant. As the $G$-action on $\wt A$ is fixed point free, so is the $G$-action on $Y$. Hence $Y / G \to A = \wt A / G$ is again a fibre bundle, and $Y / G$ is bimeromorphic over $A$ to $\wt X / G = X$.
Theorem~\ref{thm:alb bim bundle} is proved. \qed
}{}

\section{The Albanese map for singular spaces}

It is a folklore lemma that ``the Albanese map exists for projective varieties with rational singularities''~\cite[Prop.~2.3]{Rei83}, \cite[Lemma~0.3.3]{Som86}.
Projectivity is of course not necessary, and also the assumption on the singularities can be weakened.
In Theorem~\ref{thm:alb ratl} below we try to identify the largest class of compact complex spaces $X$ for which the Albanese map, defined by a universal property for maps to complex tori, exists and is related to the Albanese map of a resolution of $X$.

\begin{dfn}[Albanese map] \label{dfn:alb}
Let $X$ be a compact complex space. A holomorphic map $\alpha\!: X \to A$ to a complex torus $A$ is said to be the Albanese map of $X$, and $A$ is called the Albanese torus of $X$, if any map from $X$ to a complex torus $T$ factors uniquely through $\alpha$.
\[ \xymatrix{
X \ar[rr] \ar_-\alpha[dr] & & T \\
& A \ar@{..>}_-{\exists!}[ur] &
} \]
\end{dfn}

The Albanese torus is unique up to isomorphism if it exists. We write $A = \Alb(X)$ and $\alpha = \alb_X$.

\begin{exm} \label{exm:alb}
Let $X$ be the nodal curve obtained from an elliptic curve $E$ by identifying two points $p \ne q \in E$.
Then $h^1(X, \O_X) = 2$. However, if $\O_E(p - q) \in \Pic^0(E)$ is non-torsion, then $\Alb(X) = 0$.
\end{exm}

As Example~\ref{exm:alb} shows, even if it exists $\Alb(X)$ need not be related to $h^1(X, \O_X)$ or to $\Alb(\wt X)$, where $\wt X \to X$ is a resolution of singularities.
The situation is much better for spaces with rational singularities.

\begin{thm}[Albanese map for spaces with rational singularites] \label{thm:alb ratl}
Let $X$ be a normal compact complex space, and let $f\!: \wt X \to X$ be a projective resolution of singularities. Assume either of the following two conditions.
\begin{enumerate}
\item\label{itm:alb ratl.1} The higher direct image sheaf $R^1 f_* \Z_{\wt X}$ vanishes.
\item\label{itm:alb ratl.2} The natural map $H^1 \big( \wt X, \O_{\wt X} \big) \to H^0 \big( X, R^1 f_* \O_{\wt X} \big)$ is zero.
\end{enumerate}
Then $X$ admits an Albanese torus, $\Alb(X) = \Alb(\wt X)$, and the following diagram commutes:
\[ \xymatrix{
\wt X \ar^-{\alb_{\wt X}}[rr] \ar_-f[d] & & \Alb(\wt X) \ar@{=}[d] \\
X \ar^-{\alb_X}[rr] & & \Alb(X).
} \]
\end{thm}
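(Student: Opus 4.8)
The plan is to reduce the entire theorem to a single geometric assertion: \emph{the Albanese map $\alb_{\wt X}\colon \wt X \to A := \Alb(\wt X)$ is constant on the fibres of $f$}. Granting this, everything else is formal. Since $X$ is normal and $f$ is a proper surjective resolution, $f_*\O_{\wt X}=\O_X$, the fibres are connected, and $f$ is a topological quotient map; hence a fibrewise-constant $\alb_{\wt X}$ descends to a continuous $\bar\alpha\colon X \to A$ with $\bar\alpha\circ f = \alb_{\wt X}$. On the dense open locus where $f$ is biholomorphic, $\bar\alpha$ agrees with $\alb_{\wt X}\circ f^{-1}$ and is holomorphic; being continuous on the normal space $X$, it is holomorphic everywhere by Riemann's extension theorem. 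To see that $(A,\bar\alpha)$ is the Albanese of $X$, take any holomorphic $\psi\colon X \to T$ to a complex torus: then $\psi\circ f$ factors uniquely as $h\circ\alb_{\wt X}$ for a holomorphic $h\colon A \to T$, and since $f$ is surjective one gets $h\circ\bar\alpha = \psi$, with $h$ unique by the universal property on $\wt X$. This simultaneously establishes the universal property of $\bar\alpha$, the identification $\Alb(X)=\Alb(\wt X)$, and the commutativity of the diagram.

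So everything reduces to contracting the fibres. Fix a fibre $Z = f^{-1}(x)$, which is compact and connected. \emph{Under assumption~\ref{itm:alb ratl.1}}, proper base change gives $H^1(Z,\Z)=(R^1 f_*\Z_{\wt X})_x = 0$, hence $H^1(Z,\Q)=0$. A holomorphic map from a connected compact space with $H^1(\cdot,\Q)=0$ to a torus $A = V/\Lambda$ is constant: the induced $\pi_1(Z)\to\Lambda$ factors through $H_1(Z,\Z)\to\Lambda$, which is zero because $\Lambda$ is torsion-free while $H_1(Z,\Z)$ is torsion; thus $\alb_{\wt X}|_Z$ lifts to $V\cong\C^{\dim A}$ and is constant by compactness. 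Hence $Z$ is contracted.

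\emph{Under assumption~\ref{itm:alb ratl.2}}, I would instead pass to a resolution $j\colon \tilde Z \to \wt X$ of $Z$ (factoring through $Z\hookrightarrow\wt X$); here $\tilde Z$ is compact \kahler, being a resolution of the compact analytic subset $Z$ of the \kahler manifold $\wt X$. Then $\alb_{\wt X}|_Z$ is constant as soon as the induced map $\Alb(\tilde Z)\to A$ vanishes, equivalently the restriction of $1$-forms $j^*\colon H^0(\wt X,\Omega^1)\to H^0(\tilde Z,\Omega^1_{\tilde Z})$ is zero. Now $j^*$ underlies a morphism of weight-one Hodge structures $H^1(\wt X,\Z)\to H^1(\tilde Z,\Z)$; being defined over $\R$ it commutes with complex conjugation and interchanges the Hodge pieces, so $j^*$ vanishes on $H^{1,0}=H^0(\Omega^1)$ iff it vanishes on $H^{0,1}=H^1(\O)$. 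The vanishing on $H^1(\O)$ is what assumption~\ref{itm:alb ratl.2} supplies: the restriction $H^1(\wt X,\O_{\wt X})\to H^1(\tilde Z,\O_{\tilde Z})$ factors, via the Leray edge map, evaluation at $x$, and the coherent base-change map through the scheme-theoretic fibre, through $H^1(\wt X,\O_{\wt X})\to H^0(X,R^1 f_*\O_{\wt X})$, which is zero by hypothesis. Hence $j^*=0$ on $H^{0,1}$, therefore on $H^{1,0}$, and $Z$ is contracted.

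The main obstacle is this last step under assumption~\ref{itm:alb ratl.2}. The two delicate points are the identification of the Leray edge map with ``restriction to the fibre'' at the level of coherent cohomology — which requires care with the base-change map and with routing the restriction through the possibly non-reduced scheme-theoretic fibre before pulling back to $\tilde Z$ — and the Hodge-symmetry bridge that converts the resulting vanishing for $H^1(\O)$ into the vanishing for holomorphic $1$-forms that genuinely controls the Albanese map, which is where the \kahler property of $\wt X$ and of $\tilde Z$ is used. By contrast, assumption~\ref{itm:alb ratl.1} is dispatched by the soft $\pi_1$-argument above, and the descent and universal-property steps are purely formal consequences of the normality of $X$ and of the universal property of $\alb_{\wt X}$.
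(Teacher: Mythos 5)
Your proposal is correct and follows the same skeleton as the paper's proof (which is packaged as Lemma~\ref{lem:alb ratl}): reduce everything to showing that each fibre of $f$ is contracted by $\alb_{\wt X}$, descend using normality of $X$, and obtain the universal property formally from surjectivity of $f$; your treatment of assumption~(\ref{thm:alb ratl}.\ref{itm:alb ratl.1}) via the triviality of $\pi_1(Z)\to\pi_1(T)$ and lifting to the universal cover is the paper's argument essentially verbatim. The one point of genuine divergence is assumption~(\ref{thm:alb ratl}.\ref{itm:alb ratl.2}): the paper argues by contradiction, blowing up $\wt X$ further so that the fibre $W$ becomes snc, choosing a component $W_1$ with positive-dimensional image in the torus, and pulling back a class in $H^1(T,\O_T)$ that survives to $H^1(W_1,\O_{W_1})$ and hence to the stalk of $R^1f_*\O_{\wt X}$, whereas you resolve the fibre to a compact \kahler $\tilde Z$, observe that $H^1(\wt X,\O_{\wt X})\to H^1(\tilde Z,\O_{\tilde Z})$ factors through the Leray edge map and hence vanishes, and then use conjugation-symmetry of the weight-one Hodge structure morphism to conclude that holomorphic $1$-forms also restrict to zero. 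Both routes rest on the same Hodge-theoretic principle; yours avoids the paper's briefly justified claim that a positive-dimensional image forces a nonzero pullback on $H^1(\O)$ of a component, at the small cost of having to remark that when $Z$ is reducible its resolution is disconnected, so you contract component by component and then invoke connectedness of $Z$ to land at a single point.
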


\begin{rem}
It is clear that condition~(\ref{thm:alb ratl}.\ref{itm:alb ratl.2}) is satisfied for a space $X$ with rational singularities.
The pushforward of the exponential sequence on $\wt X$ yields an injection $R^1 f_* \Z_{\wt X} \inj R^1 f_* \O_{\wt X}$, hence also~(\ref{thm:alb ratl}.\ref{itm:alb ratl.1}) holds for such $X$.
\end{rem}

\begin{proof}[Proof of Theorem~\ref{thm:alb ratl}]
Let $\wt\alpha\!: \wt X \to \Alb(\wt X)$ be the Albanese map of $\wt X$, see~\cite[p.~163]{Bla56}. By Lemma~\ref{lem:alb ratl} below, the map $\wt\alpha$ factors through a map $\alpha\!: X \to \Alb(\wt X)$. By Lemma~\ref{lem:alb ratl} again, $\alpha$ has the desired universal property.
\end{proof}

\begin{lem} \label{lem:alb ratl}
Let $X$ be a normal complex space (not necessarily compact) and $f\!: \wt X \to X$ a projective resolution such that~(\ref{thm:alb ratl}.\ref{itm:alb ratl.1}) or~(\ref{thm:alb ratl}.\ref{itm:alb ratl.2}) is satisfied.
Then any map $\phi\!: \wt X \to T$ to a complex torus $T$ factors through $f$.
\end{lem}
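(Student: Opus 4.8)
The plan is to reduce the statement to a local, fibrewise claim and then invoke a rigidity argument. Concretely, it suffices to prove that $\phi$ is constant on every fibre $F_x := f^{-1}(x)$ of $f$. Indeed, since $X$ is normal and $f$ is a proper bimeromorphic morphism, we have $f_* \O_{\wt X} = \O_X$ and the fibres $F_x$ are connected; hence $f$ is a topological quotient map. If $\phi$ is constant on fibres, the set-theoretic factorization $\psi\colon X \to T$, $\psi(x) = \phi(F_x)$, is continuous, and it is holomorphic by the following standard local computation: given $x$, choose a coordinate polydisc $W \subset T$ around $\psi(x)$ and a neighborhood $U \ni x$ with $\phi(f^{-1}(U)) \subset W$; the $g$ coordinate functions of $\phi|_{f^{-1}(U)}$ then lie in $\O(f^{-1}(U)) = \O_X(U)$ by $f_* \O_{\wt X} = \O_X$, so they descend to $U$ and exhibit $\psi|_U$ as holomorphic.

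It remains to show that $\phi|_{F_x}$ is constant for each $x$. Write $T = V/\Lambda$ with $V \isom \C^g$ and $\Lambda \isom \Z^{2g}$ the period lattice, and let $\iota_x\colon F_x \inj \wt X$ be the inclusion. Because $F_x$ is compact and connected, $\phi|_{F_x}$ is constant as soon as the induced homomorphism $\pi_1(F_x) \to \pi_1(T) = \Lambda$ is trivial: in that case $\phi|_{F_x}$ lifts to the universal cover $V \isom \C^g$, and a holomorphic map from a compact connected space into $\C^g$ is constant. Since $\Lambda$ is free, the functionals in $H^1(T, \Z) = \Hom(\Lambda, \Z)$ separate the points of $\Lambda$; hence the vanishing of $\iota_x^* \phi^*\colon H^1(T, \Z) \to H^1(F_x, \Z)$ forces $\pi_1(F_x) \to \Lambda$ to be trivial. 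Thus we are reduced to showing that $\phi^* c$ restricts to $0 \in H^1(F_x, \Z)$ for every $c \in H^1(T, \Z)$ and every $x \in X$.

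These fibre restrictions are governed by the edge homomorphism $e_\Z\colon H^1(\wt X, \Z) \to H^0\big(X, R^1 f_* \Z_{\wt X}\big)$ of the Leray spectral sequence for $f$ and $\Z_{\wt X}$: by proper base change the stalk $(R^1 f_* \Z_{\wt X})_x$ is $H^1(F_x, \Z)$, and the value at $x$ of the section $e_\Z(\gamma)$ is exactly $\gamma|_{F_x}$. Under hypothesis~(\ref{thm:alb ratl}.\ref{itm:alb ratl.1}) the target sheaf, hence the target group, vanishes, and we are done at once. Under hypothesis~(\ref{thm:alb ratl}.\ref{itm:alb ratl.2}) I would use the commutative square of edge maps induced by the inclusion $\Z_{\wt X} \inj \O_{\wt X}$, whose analytic edge map $e_\O\colon H^1(\wt X, \O_{\wt X}) \to H^0(X, R^1 f_* \O_{\wt X})$ vanishes by assumption. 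The image of $e_\Z(\phi^* c)$ under $H^0(X, R^1 f_* \Z_{\wt X}) \to H^0(X, R^1 f_* \O_{\wt X})$ therefore equals $e_\O$ applied to the image of $\phi^* c$ in $H^1(\wt X, \O_{\wt X})$, which is $0$. Since the pushforward of the exponential sequence $0 \to \Z_{\wt X} \to \O_{\wt X} \to \O_{\wt X}^* \to 0$ yields an injection $R^1 f_* \Z_{\wt X} \inj R^1 f_* \O_{\wt X}$ and $H^0(X, -)$ is left exact, this map on global sections is injective, forcing $e_\Z(\phi^* c) = 0$. Evaluating at $x$ gives $\phi^* c|_{F_x} = 0$, as needed.

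The main obstacle is the passage in case~(\ref{thm:alb ratl}.\ref{itm:alb ratl.2}) from the analytic hypothesis on $\O$-cohomology to the topological vanishing of the integral classes $\phi^* c$ along the fibres; this hinges on the naturality of the Leray edge maps with respect to $\Z_{\wt X} \inj \O_{\wt X}$ and on the injectivity of $R^1 f_* \Z_{\wt X} \hookrightarrow R^1 f_* \O_{\wt X}$. A secondary point requiring care is the rigidity step, where the set-theoretic factorization must be upgraded to a holomorphic map; here the identity $f_* \O_{\wt X} = \O_X$, valid because $X$ is normal and $f$ is a proper bimeromorphic morphism, is essential.
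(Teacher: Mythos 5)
Your proof is correct, and in case~(\ref{thm:alb ratl}.\ref{itm:alb ratl.2}) it takes a genuinely different route from the paper. Your reduction to fibrewise constancy and your treatment of case~(\ref{thm:alb ratl}.\ref{itm:alb ratl.1}) match the paper's argument, which passes directly to the stalk $H^1(W, \Z) = (R^1 f_* \Z_{\wt X})_p = 0$ and then lifts $\phi|_W$ to the Stein universal cover of $T$ --- exactly your rigidity step. For case~(\ref{thm:alb ratl}.\ref{itm:alb ratl.2}), however, the paper argues by contradiction: it blows up $\wt X$ further so that $W = f^{-1}(p)$ becomes a simple normal crossings divisor, picks a component $W_1$ with $\dim \phi(W_1) > 0$, and uses Hodge theory on the projective manifold $W_1$ to produce $\sigma \in H^1(T, \O_T)$ with $(\phi|_{W_1})^* \sigma \ne 0$, whence a nonzero image of $\phi^* \sigma$ in $H^0\big(X, R^1 f_* \O_{\wt X}\big)$. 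You instead stay with integral classes: functoriality of the Leray edge maps under $\Z_{\wt X} \hookrightarrow \O_{\wt X}$, combined with the injection $R^1 f_* \Z_{\wt X} \hookrightarrow R^1 f_* \O_{\wt X}$ (which the paper itself records in the remark following Theorem~\ref{thm:alb ratl}), shows that hypothesis~(\ref{thm:alb ratl}.\ref{itm:alb ratl.2}) already forces the integral edge map $e_\Z$ to vanish; both hypotheses are thus subsumed under the single statement that $H^1(\wt X, \Z) \to H^1(F_x, \Z)$ is zero for every $x$, after which the lifting argument applies uniformly. What your route buys: no Hodge theory on the fibres, no auxiliary blow-up (whose compatibility with hypothesis~(\ref{thm:alb ratl}.\ref{itm:alb ratl.2}) the paper leaves implicit), and a unified treatment of the two cases. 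What the paper's route buys: it avoids any statement about the integral edge map and gives a more geometric picture (a positive-dimensional image in a torus must carry a nontrivial class of type $(0,1)$). The two ingredients you should justify explicitly are the surjectivity of $f_* \O_{\wt X} \to f_* \O_{\wt X}^*$ on stalks, which underlies the injection $R^1 f_* \Z_{\wt X} \hookrightarrow R^1 f_* \O_{\wt X}$ and follows from $f_* \O_{\wt X} = \O_X$ together with the existence of local logarithms of units on the normal space $X$, and proper base change for $R^1 f_* \Z_{\wt X}$; both are standard.
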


\begin{proof}
As $X$ is assumed to be normal, it is sufficient to prove the following:
For any $p \in X$, the fibre $W := f^{-1}(p)$ is contracted to a point by $\phi$.

In case~(\ref{thm:alb ratl}.\ref{itm:alb ratl.1}), the assumption implies $H^1(W, \Z) = 0$. Thanks to the universal coefficient theorem and the fact that $W$ is connected, $H^1(W, \Z) = \Hom(\pi_1(W), \Z)$.
As $\pi_1(T)$ is free abelian, the map $\pi_1(W) \to \pi_1(T)$ induced by $\phi|_W$ is trivial and $\phi|_W$ lifts to the universal cover $\wt T$ of $T$.
Since $\wt T$ is Stein, the lift is constant, hence so is $\phi|_W$ itself.

In case~(\ref{thm:alb ratl}.\ref{itm:alb ratl.2}), assume that $\phi(W)$ is positive-dimensional.
Blowing up $\wt X$ further we may assume that $W$ is a simple normal crossings divisor.
Let $W_1 \subset W$ be an irreducible component such that $\phi(W_1)$ is positive-dimensional. Then there exists $\sigma \in H^1(T, \O_T)$ such that $(\phi|_{W_1})^* \sigma \ne 0 \in H^1(W_1, \O_{W_1})$.
It follows that for any neighborhood $p \in U \subset X$, setting $\wt U = f^{-1}(U)$ we have $(\phi^* \sigma)\big|_{\wt U} \ne 0 \in H^1(\wt U, \O_{\wt U})$.
Hence the image of $\phi^* \sigma \in H^1(\wt X, \O_{\wt X})$ in $H^0(X, R^1 f_* \O_{\wt X})$ is nonzero. This contradicts the assumption~(\ref{thm:alb ratl}.\ref{itm:alb ratl.2}).
\end{proof}

\section{Decomposing the Albanese map} \label{sec:lt alb}

In this section, we first prove Theorem~\ref{thm:alb cy3 intro} from the introduction.
This will then yield the following statement about Calabi--Yau threefolds with trivial canonical sheaf.

\begin{thm} \label{thm:cy3 K=0}
Let $X$ be a \cy threefold (in the sense of Definition~\ref{dfn:cy}) with only isolated canonical singularities and $\omega_X \isom \O_X$.
If $X$ is non-projective, then it is smooth.
\end{thm}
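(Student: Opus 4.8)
The plan is to prove the contrapositive: assuming $X$ is singular, I show that it is projective. By Theorem~\ref{thm:alb cy3} the Albanese map $\alpha\colon X \to A = \Alb(X)$ is an analytic fibre bundle with connected fibre $F$, and $\dim A = q := q(X)$ satisfies $0 \le q \le 3$. If $q = 3$, then $F$ is a point, $\alpha$ is an isomorphism, and $X \isom A$ is a smooth torus, contradicting the assumption that $X$ is singular. If $1 \le q \le 2$, local triviality gives an isomorphism $\alpha^{-1}(U) \isom U \x F$ over a suitable neighbourhood $U$ of any point of $A$, so that $\operatorname{Sing}(X)$ is locally of the form $U \x \operatorname{Sing}(F)$. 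A singular point of $X$ lies in some fibre, forcing $F$ to be singular (all fibres being isomorphic) and hence producing a singular locus of dimension $\ge \dim U = q \ge 1$; this contradicts the isolatedness of the singularities. Therefore a singular $X$ necessarily has $q(X) = 0$, which I assume from now on.

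Next I compute $h^2(X, \O_X)$. Let $\pi\colon \wt X \to X$ be a projective resolution, so that $\wt X$ is a compact \kahler manifold. Canonical singularities are rational, whence $\pi_* \O_{\wt X} = \O_X$ and $R^i \pi_* \O_{\wt X} = 0$ for $i > 0$; the Leray spectral sequence then yields $h^i(X, \O_X) = h^i(\wt X, \O_{\wt X})$ for all $i$. Since $\omega_X \isom \O_X$, Serre duality on the compact threefold $X$ gives $h^2(X, \O_X) = h^1(X, \omega_X) = h^1(X, \O_X) = q(X) = 0$. Consequently $h^{0,2}(\wt X) = h^2(\wt X, \O_{\wt X}) = 0$, and by Hodge symmetry $h^{2,0}(\wt X) = 0$ as well, so that $H^2(\wt X, \C) = H^{1,1}(\wt X)$. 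In particular $H^2(\wt X, \Q)$ is dense in $H^2(\wt X, \R)$, which contains the nonempty open \kahler cone $\cK$. Choosing a rational class in $\cK$ and clearing denominators produces an integral \kahler class, so $\wt X$ is projective by the Kodaira Embedding Theorem.

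It remains to descend projectivity from $\wt X$ to $X$, and I expect this --- rather than the Hodge computation --- to be the delicate point, as it is precisely where one must leave the smooth category while retaining the \kahler hypothesis on $X$ itself. Since $\pi$ is bimeromorphic, $X$ is Moishezon, and being also \kahler it is projective by the normal-space analogue of Moishezon's theorem. Alternatively one argues directly on $X$: the vanishing $h^2(X, \O_X) = 0$ makes $c_1\colon \Pic(X) \to H^2(X, \Z)$ surjective modulo torsion via the exponential sequence, while $H^2(X, \C) \hookrightarrow H^2(\wt X, \C)$ is a sub-Hodge-structure of pure type $(1,1)$; hence line-bundle classes are dense in $H^2(X, \R)$ and some line bundle has Chern class inside the \kahler cone, i.e.\ is positive, so that $X$ is projective by Grauert's criterion. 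Either way a singular $X$ is projective, which is the desired contrapositive.
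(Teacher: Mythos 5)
Your proposal is correct and follows essentially the same route as the paper: the fibre-bundle structure of the Albanese map (Theorem~\ref{thm:alb cy3}) forces $q(X)=0$ when the singularities are isolated and nonempty, Serre duality together with $\omega_X \isom \O_X$ then kills $H^2(X,\O_X)$, and the resolution $\wt X$ is projective by openness of the \kahler cone plus Kodaira embedding, with projectivity descending to $X$ because a Moishezon \kahler space with rational singularities is projective (the paper cites Namikawa for this last step, which is exactly your ``normal-space analogue of Moishezon's theorem''). The only cosmetic differences are that the paper isolates the $q(X)\le\dim X_{\sg}$ step as Proposition~\ref{prp:q le 1} and the descent step as Proposition~\ref{prp:Kodaira}.
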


\subsection{Local group actions and vector fields}

For the reader's convenience, we recall the definition of a local group action.
Our presentation follows~\cite[Sec.~4.A]{GKK10}, but see also~\cite[Sec.~4]{Kau65}.

\begin{dfn}[Local group action] \label{dfn:local grp action}
Let $G$ be a connected complex Lie group and $Z$ a reduced complex space. A \emph{local $G$-action} is given by a holomorphic map $\Phi\!: \Theta \to Z$, where $\Theta \subset G \x Z$ is an open neighborhood of $\{ e \} \x Z$, such that:
\begin{enumerate}
\item For all $z \in Z$, the subset $\{ g \in G \;|\; (g, z) \in \Theta \}$ is connected.
\item We have $\Phi(e, z) = z$ for all $z \in Z$.
\item If $(gh, z), (h, z), (g, \Phi(h, z)) \in \Theta$, then $\Phi(gh, z) = \Phi(g, \Phi(h, z))$.
\end{enumerate}
As usual, we write $gz$ as a shorthand for $\Phi(g, z)$ if the action is understood.
\end{dfn}

There is a natural notion of \emph{equivalence} of local $G$-actions on $Z$ given by shrinking $\Theta$ to a smaller neighborhood of $\{e\} \x Z$ in $G\times Z$.
An equivalence class of actions induces a Lie algebra homomorphism
\[ \lambda\!: \mf g \to H^0 \big(Z, \sT_Z \big), \]
where $\mf g$ is the Lie algebra of $G$. In terms of derivations $\O_Z \to \O_Z$, the vector field $\lambda(\xi)$ sends a function $f \in \O_Z(U)$ to
\[ z \mapsto \frac\d{\d t} \bigg|_{t=0} f \big( \exp(t \xi) z \big). \]
Here $\exp\!: \mf g \to G$ is the exponential map.
The following theorem tells us that there is an inverse to this construction.

\begin{thm}[Vector fields and local group actions] \label{thm:vf grp action}
If $\lambda\!: \mf g \to H^0 \big(Z, \sT_Z \big)$ is a homomorphism of Lie algebras, then up to equivalence (i.e.~shrinking $\Theta$ as described above) there exists a unique local $G$-action on $Z$ that induces the given $\lambda$.
\end{thm}

\begin{proof}
See~\cite[Satz 3]{Kau65}.
\end{proof}

\subsection{Proof of Theorem~\ref{thm:alb cy3 intro}}

As the proof is somewhat long, we divide it into several steps.

\subsubsection*{Step 0: Setup of notation}

We may assume that $q := q(X) > 0$, otherwise the statement is empty.
Let $\pi\!: \wt X \to X$ be a projective resolution of singularities. Then $\wt X$ is a compact \kahler manifold and since $X$ has canonical singularities, $\wt X$ has Kodaira dimension $\kappa(\wt X) = \kappa(X, K_X) = 0$ by Proposition~\ref{prp:cy3 abundance}.
Since canonical singularities are rational, by Theorem~\ref{thm:alb ratl} the Albanese map $\wt\alpha$ of $\wt X$ is given by
\[ \wt\alpha\!: \wt X \xrightarrow{\quad\pi\quad} X \xrightarrow{\quad\alpha\quad} A = \Alb(\wt X). \]
By \PreprintAndPublication{Theorem~\ref{thm:alb bim bundle}}{\cite[Main Thm.~I]{Uen87}}, the map $\wt\alpha$ is bimeromorphic to an analytic fibre bundle $f\!: Y \to A$ with smooth connected fibre $F$.
We necessarily have $\kappa(F) = 0$.
The situation can be summarized in the following diagram.
\begin{sequation} \label{eqn:diag}
\xymatrix{
X \ar_-{\alb_X = \alpha}[drr] \ar@/^1.75pc/@{-->}^-{\text{$\beta$, bimerom.}}[rrrr] & & \wt X \ar_-{\quad\text{$\pi$, resol.}}[ll] \ar@{-->}^-{\text{bimerom.}}[rr] \ar^-{\wt\alpha}[d] & & Y \supset F \ar^-{\quad\text{$f$, fibre bundle}}[dll] \\
& & A & &
}
\end{sequation}%

\subsubsection*{Step 1: The curvature of $Y$}

If $q = 1$, then the fibre $F$ is two-dimensional.
Let $F \to F'$ be the minimal model of $F$.
We run a Minimal Model Program relative to the map $f$. Since $f$ is a fibre bundle, every step in this MMP is the inverse of the blowup of an \'etale multisection of $f$. Hence the end product of this MMP is a fibre bundle $Y' \to A$ with fibre $F'$.
Replacing $Y$ by $Y'$, we may assume the following.

\begin{awlog} \label{awlog:F min}
If $q = 1$, then $F$ is a minimal surface.
\end{awlog}

This has the following important consequence.

\begin{clm} \label{clm:numtriv}
The manifold $Y$ carries a Ricci-flat \kahler metric.
\end{clm}

\begin{proof}[Proof of Claim~\ref{clm:numtriv}]
By Assumption~\ref{awlog:F min} and abundance for surfaces, we may pick $m > 0$ such that $\omega_F^{\tensor m}$ is trivial.
Possibly replacing $m$ by a multiple, there is a nonzero section $s \in H^0(Y, mK_Y)$.
For every fibre $Y_a$, the restriction $s|_{Y_a}$ is either zero or non-vanishing everywhere.
If $s|_{Y_a}$ is zero for some $a \in A$, then there is a nonzero effective divisor $a \in D \subset A$ such that $f^* D \le mK_Y$.
But then $0 = \kappa(Y) \ge \kappa \big( A, \O_A(D) \big) \ge 1$ by~\cite[Thm.~2.3.3]{BL04}, which is impossible.
Hence $s$ is non-vanishing everywhere, and $mK_Y \sim 0$.
In particular, the first Chern class $c_1(Y)$ vanishes.
On the other hand, $Y$ is \kahler by Lemma~\ref{class C bundle} below.
We conclude by Yau's solution to the Calabi conjecture~\cite{Yau78}.
\end{proof}

\subsubsection*{Step 2: The bimeromorphic geometry of $\beta$}

Let $X^\circ$ be the largest open subset of $X$ such that $\beta$ induces an isomorphism of $X^\circ$ and a dense open subset of $Y$.

\begin{clm} \label{clm:beta}
The map $\beta^{-1}$ in diagram~\eqref{eqn:diag} is a bimeromorphic contraction, i.e.~$\beta$ does not contract any divisors.
Thus $\codim_X(X \minus X^\circ) \ge 2$.
\end{clm}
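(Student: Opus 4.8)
The plan is to show that $\beta\colon X \dashrightarrow Y$ is \emph{crepant}, so that its discrepancies measured over $X$ and over $Y$ coincide, and then to observe that a prime divisor of $X$ contracted by $\beta$ would violate this equality precisely because $Y$ is smooth.

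First I would record that $K_X \sim_\Q 0$. Fix $m > 0$ with $\omega_X^{[m]}$ locally free. Since $c_1(X) = 0$ the line bundle $\omega_X^{[m]}$ is numerically trivial, and since $\kappa(X) = 0$ some power $\omega_X^{[\ell m]}$ carries a nonzero section; its zero divisor is effective and numerically trivial, hence zero because $X$ is of class $\cC$ (the same argument as in the proof of Claim~\ref{clm:numtriv}). Thus $\omega_X^{[\ell m]} \isom \O_X$. Together with $K_Y \sim_\Q 0$ (Claim~\ref{clm:numtriv}), both $X$ and $Y$ have $\Q$-trivial canonical class. Next choose a common resolution of the bimeromorphic map $\wt X \dashrightarrow Y$ in~\eqref{eqn:diag}, i.e.\ a normal compact space $W$ with proper bimeromorphic morphisms $g\colon W \to X$ and $h\colon W \to Y$ through which $\beta$ factors, and write the two discrepancy identities $K_W = g^* K_X + \sum_E a(E,X)\,E = h^* K_Y + \sum_E a(E,Y)\,E$, the sums running over all prime divisors $E$ of $W$ (with vanishing coefficient on the non-exceptional ones).

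The heart of the argument is to prove that $\Theta := \sum_E \big( a(E,X) - a(E,Y) \big) E = h^* K_Y - g^* K_X$ vanishes. Over $X$ the divisor $g^* K_X$ is numerically $g$-trivial, being a pullback, while $h^* K_Y \sim_\Q 0$; hence $\Theta$ is numerically $g$-trivial. Its pushforward $g_* \Theta$ is anti-effective, since a non-$g$-exceptional component $E$ has $a(E,X) = 0$ whereas $a(E,Y) \ge 0$. The negativity lemma~\cite[Lem.~3.39]{KM98} then gives $-\Theta \ge 0$. Running the symmetric argument relative to $h$ — here one uses $K_X \sim_\Q 0$ to see that $g^* K_X$ is numerically $h$-trivial, and the canonicity of $X$ (so that $a(E,X) \ge 0$) to see that $h_* \Theta$ is effective — yields $\Theta \ge 0$. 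Therefore $\Theta = 0$, that is, $a(E,X) = a(E,Y)$ for every prime divisor $E$ of $W$. With this in hand the claim is immediate: if a prime divisor $D \subset X$ were contracted by $\beta$, then its strict transform $\wt D \subset W$ would be non-$g$-exceptional, so $a(\wt D, X) = 0$, but $h$-exceptional, and as $Y$ is smooth this forces $a(\wt D, Y) \ge 1$, contradicting $a(\wt D, X) = a(\wt D, Y)$. Hence $\beta$ contracts no divisor. Finally $X \minus X^\circ$ is contained in the union of the indeterminacy locus of $\beta$ and the locus contracted by $\beta$; the former has codimension $\ge 2$ because $X$ is normal, and the latter contains no divisor by what we have just shown, so $\codim_X(X \minus X^\circ) \ge 2$.

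The step I expect to be the main obstacle is the use of the negativity lemma in the bimeromorphic rather than the projective category: one needs it for proper bimeromorphic morphisms of normal compact complex spaces. I would either invoke the form available from the \kahler MMP, or arrange $g$ and $h$ to be \emph{projective} morphisms (for instance by dominating the graph of $\beta$ and resolving projectively), which reduces the estimate to the classical statement. Everything else is formal.
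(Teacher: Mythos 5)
Your argument is correct, but it takes a genuinely different and heavier route than the paper. You prove the full $K$-equivalence statement $a(E,X)=a(E,Y)$ for \emph{every} divisor $E$ over $X$ and $Y$, by applying the negativity lemma \cite[Lem.~3.39]{KM98} twice to $\Theta = h^*K_Y - g^*K_X$; the claim then follows because a divisor contracted by $\beta$ would have discrepancy $0$ over $X$ but discrepancy $\ge 1$ over the smooth $Y$. The paper argues in one direction only: on a resolution of indeterminacy $Z$ with $r\colon Z \to X$ and $s\colon Z \to Y$, it writes $K_Z \sim_\Q s^*K_Y + E$ with $E \ge 0$ supported on the whole $s$-exceptional locus (terminality of the smooth $Y$), pushes forward by $r$, and uses $K_X \sim_\Q 0$ (Proposition~\ref{prp:cy3 abundance}) together with $K_Y \sim_\Q 0$ (Claim~\ref{clm:numtriv}) to conclude $r_*E \sim_\Q 0$; an effective $\Q$-linearly trivial divisor on a compact connected normal space vanishes, so every $s$-exceptional divisor is $r$-exceptional. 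The numerical inputs of the two arguments are identical, but the paper's version replaces your two invocations of the negativity lemma by the single observation that global compactness kills effective $\Q$-trivial divisors --- which sidesteps exactly the obstacle you flag, namely the availability of the negativity lemma for proper bimeromorphic morphisms in the analytic category. (That lemma does hold in this generality, so your proof has no actual gap; it is just less economical, and proves a stronger statement than needed.) Your handling of the final deduction $\codim_X(X \minus X^\circ) \ge 2$ is at the same level of detail as the paper's.
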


\begin{proof}[Proof of Claim~\ref{clm:beta}]
Pick a resolution of indeterminacy of $\beta$, i.e.~a diagram
\[ \xymatrix{
& Z \ar_r[dl] \ar^s[dr] & \\
X \ar@{-->}^\beta[rr] & & Y \\
} \]
with $Z$ normal and $r, s$ proper and bimeromorphic.
It suffices to show that if $E_0 \subset Z$ is a prime divisor which is $s$-exceptional, then it is also $r$-exceptional.
To this end, we have the ramification formula
\begin{sequation} \label{eqn:ram}
K_Z \sim_\Q s^* K_Y + E,
\end{sequation}%
where $E$ is $s$-exceptional. Since $Y$ is smooth and hence terminal, $E \ge 0$ is effective and $E_0 \subset \supp(E)$.
Pushing~\eqref{eqn:ram} forward to $X$ yields
\[ K_X \sim_\Q r_* s^* K_Y + r_* E \sim_\Q 0. \]
By Claim~\ref{clm:numtriv}, we get $r_* E \sim_\Q 0$.
As $r_* E$ is effective, this implies $r_* E = 0$.
Then also $r_* E_0 = 0$, as desired.
\end{proof}

\subsubsection*{Step 3: $Y$ splits off a torus}

As an intermediate step, we show that the conclusion we want to prove about $X$ holds for $Y$.

\begin{clm} \label{clm:split Y}
There exists a finite \'etale cover $\phi\!: A_1 \to A$ such that $Y_1 := Y \x_A A_1$ is isomorphic to $F \x A_1$ over $A_1$.
\end{clm}

\begin{proof}[Proof of Claim~\ref{clm:split Y}]
Note that $f\!: Y \to A$ is the Albanese map of $Y$.
Thus, by the universal property in Definition~\ref{dfn:alb}, for every $g \in \Aut(Y)$ there exists a unique $\phi_g\!: A \to A$ such that $\phi_g \circ f = f \circ g$.
If $g \in \Autn(Y)$, the identity component, then $g$ acts trivially on $H^1(Y, \C)$ and in particular on $H^0(Y, \Omega_Y^1)^*$.
Consequently, the linear part of $\phi_g$ is the identity.
That is, $\phi_g$ is a translation by some element $\phi(g) \in A$.
We have thus defined a homomorphism of complex Lie groups $\phi\!: \Autn(Y) \to A$, which we will now show to be the desired cover $A_1 \to A$.
First of all, $Y$ is not uniruled, hence $\Autn(Y)$ is a complex torus by~\cite[Prop.~5.10]{Fuj78}.
It is thus sufficient to show that the induced Lie algebra map $\d\phi\!: H^0(Y, \sT_Y) \to H^0(Y, \Omega_Y^1)^*$ is an isomorphism.
One checks easily that $\d\phi$ is given by the natural contraction pairing
\begin{sequation} \label{contraction}
H^0(Y, \sT_Y) \x H^0(Y, \Omega_Y^1) \lto H^0(Y, \O_Y) = \C.
\end{sequation}%
Let $0 \ne \vec v \in H^0(Y, \sT_Y)$ be a nonzero holomorphic vector field.
Due to the Bochner principle~\cite[Lemma~14.17]{Besse87}, $\vec v$ is parallel with respect to the Chern connection $\nabla$ of the Ricci-flat metric from Claim~\ref{clm:numtriv}.
Dualizing using that metric, $\vec v$ gives rise to a parallel (hence holomorphic, as $\nabla^{0,1} = \bar\del$) $1$-form $\alpha$ on $Y$.
Clearly the contraction $\iota_{\vec v} \alpha \ne 0$.
The argument can also be read backwards, hence \eqref{contraction} is a perfect pairing and $\d\phi$ is an isomorphism.

It remains to show that the family $f_1\!: Y \x_A A_1 \to A_1$ is trivial.
To this end, let $g \in A_1 = \Autn(Y)$ be any point.
Since $\phi_g \circ f = f \circ g$, the automorphism $g$ restricts to an isomorphism $f^{-1}(0) \bij f^{-1}(\phi_g(0)) = f^{-1}(\phi(g))$.
This is the same thing as an isomorphism $f_1^{-1}(0) \bij f_1^{-1}(g)$, independent of any choices.
We have constructed a section of $\Isom_{A_1}(Y \x_A A_1, F \x A_1) \to A_1$.
The claim follows.
\end{proof}

\subsubsection*{Step 4: Vector fields and their flows}

To unclutter notation, we will pull back the whole diagram~\eqref{eqn:diag} to $A_1$ without putting a subscript ``$1$'' everywhere.
In particular, $A$ and $Y$ from now on refer to $A_1$ and $Y_1$, respectively.

Pick a basis $\vec\theta_1, \dots, \vec\theta_q$ of $H^0(A, \sT_A)$, and note that the $\vec\theta_i$ necessarily commute.
Consider the vector fields $\vec w_i := (0, \vec\theta_i)$ on $Y \isom F \x A$.
They induce vector fields $\vec v_i := \beta^{-1}_* (\vec w_i)$ holomorphic on $X^\circ$.
By Claim~\ref{clm:beta} and the reflexivity of the tangent sheaf $\sT_X$, the $\vec v_i$ extend to holomorphic vector fields on all of $X$.
Let $G$ be the complex Lie group $(\C^q, +)$. Its Lie algebra is the abelian Lie algebra $\mf g = \C^q$.
Since $[\vec v_i, \vec v_j] = 0$ for all $1 \le i, j \le q$, the linear map $\lambda\!: \mf g \to H^0 \big( X, \sT_X \big)$ defined by sending the standard basis vectors $e_i \in \mf g$ to $\vec v_i$ is a homomorphism of Lie algebras.
By Theorem~\ref{thm:vf grp action}, the map $\lambda$ is induced by a unique local $\C^q$-action on $X$, given by a holomorphic map
\[ \C^q \x X \supseteq \Theta \xrightarrow{\Phi} X \]
as in Definition~\ref{dfn:local grp action}.
Since $X$ is compact, this action is defined everywhere, i.e.~$\Theta = \C^q \x X$.
Similarly, the vector fields $\vec\theta_i$ on $A$ give rise to a (global) $\C^q$-action on $A$,
\[ \C^q \x A \xrightarrow{\Psi} A, \]
which induces the action $\psi$ of $A$ on itself by translation.
Since $\alpha_* \vec v_i = \vec\theta_i$ in the sense of Definition~\ref{dfn:projectable} below, the diagram
\[ \xymatrix{
\C^q \x X \ar_-{\id \x \alpha}[d] \ar^-\Phi[r] & X \ar^-{\alpha}[d] \\
\C^q \x A \ar^-\Psi[r] & A
} \]
commutes.
We obtain an induced action $\phi\!: A \x X \to X$ that commutes with $\psi$.
Its restriction to $A \x X_a \to X$, for any fibre $X_a$, is an isomorphism.
This completes the proof of Theorem~\ref{thm:alb cy3 intro}. \qed

\begin{lem}[Class $\cC$ fibre bundles] \label{class C bundle}
Let $X, B$ be compact complex manifolds and $f\!: X \to B$ an analytic fibre bundle, with fibre $F$.
Assume that $F$ and $B$ are \kahler and that $X$ is of class $\cC$.
If $\dim X \le 3$, then $X$ is \kahler.
\end{lem}

\begin{proof}
We may assume that $\dim B \in \{ 1, 2 \}$.
Let $\mu\!: \wt X \to X$ be a proper bimeromorphic map from a \kahler manifold $(\wt X, \wt\omega)$.
Then $\omega := \mu_* \wt\omega$ is a \kahler current on $X$.
Since $\dim F \le 2$, for general $b \in B$ the restriction of $[\omega] \in H^2(X, \R)$ to the fibre $X_b$ is a \kahler class by~\cite{Miy74}.
As $f$ is locally trivial, this is even true for all $b \in B$.
Now pick any \kahler class $[\omega_B]$ on $B$.
It follows that for $\lambda \gg 0$ sufficiently large, $[\omega] + \lambda \cdot f^* [\omega_B]$ is \kahler on $X$.
\end{proof}

\subsection{Threefolds with trivial canonical bundle}

Theorem~\ref{thm:cy3 K=0} is a consequence of the following two propositions.

\begin{prp} \label{prp:q le 1}
Let $X$ be a \cy threefold. If $X$ is not smooth, then its singular locus $X_\sg$ satisfies
\[ q(X) \le \dim X_\sg \le 1. \]
\end{prp}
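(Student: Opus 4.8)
The plan is to derive the proposition directly from Theorem~\ref{thm:alb cy3}, which presents the Albanese map $\alpha\!: X \to A := \Alb(X)$ as an analytic fibre bundle. The inequality $\dim X_\sg \le 1$ is the easy half and requires nothing beyond normality: since $X$ is normal, its singular locus has codimension at least two, so $\dim X_\sg \le \dim X - 2 = 1$.

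For the remaining inequality $q(X) \le \dim X_\sg$, first I would record that $\dim A = q(X)$. This follows from Definition~\ref{dfn:irregularity} together with the identification $A = \Alb(\wt X)$ for a resolution $\wt X \to X$, as the Albanese torus of a compact \kahler manifold $\wt X$ has dimension $h^1(\wt X, \O_{\wt X}) = q(X)$. Now apply Theorem~\ref{thm:alb cy3}: the map $\alpha$ is an analytic fibre bundle, say with fibre $F$. Every point of $A$ has a neighborhood $U$ with $\alpha^{-1}(U) \isom U \x F$ over $U$. Since $U$ is an open subset of the torus $A$, it is smooth, so the singular locus of this product is $U \x F_\sg$. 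Hence $X_\sg$ meets $\alpha^{-1}(U)$ in $U \x F_\sg$, which shows that $X_\sg$ is itself an analytic fibre bundle over $A$ with fibre $F_\sg$. Consequently
\[ \dim X_\sg = \dim A + \dim F_\sg = q(X) + \dim F_\sg. \]

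It then remains to observe that $F_\sg \ne \emptyset$. The same local description $\alpha^{-1}(U) \isom U \x F$ shows that $X$ is smooth at a point precisely when $F$ is smooth at the corresponding point of the fibre; thus $X$ being non-smooth forces $F$ to be singular, whence $F_\sg \ne \emptyset$ and $\dim F_\sg \ge 0$. Substituting into the displayed equality gives $\dim X_\sg \ge q(X)$, which completes the chain $q(X) \le \dim X_\sg \le 1$.

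I do not expect a serious obstacle here, since essentially all of the geometric content has been absorbed into Theorem~\ref{thm:alb cy3}; the remaining argument is short bookkeeping. The one point deserving care is the assertion that the singular locus of the total space is again a fibre bundle, with fibre $F_\sg$, and that ``$X$ non-smooth'' is equivalent to ``$F$ non-smooth''. This is exactly where the local triviality of $\alpha$ is used in an essential way (as opposed to mere surjectivity or flatness), since it is the local product structure $U \x F$ that lets us both compute $\dim X_\sg$ and transfer non-smoothness from $X$ to the fibre $F$.
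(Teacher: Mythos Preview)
Your proof is correct and follows essentially the same approach as the paper: both invoke Theorem~\ref{thm:alb cy3} to get the local product structure $\alpha^{-1}(U)\isom U\times F$, observe that $(X_U)_\sg\isom U\times F_\sg$, and conclude $\dim X_\sg\ge q(X)$ from $F_\sg\ne\emptyset$ together with the normality bound $\dim X_\sg\le 1$. Your write-up is slightly more explicit (e.g.\ justifying $\dim A=q(X)$ via $A=\Alb(\wt X)$), but the argument is the same.
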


\begin{proof}
By Theorem~\ref{thm:alb cy3 intro}, the Albanese map $\alpha\!: X \to A = \Alb(X)$ of $X$ is an analytic fibre bundle, say with fibre $F$.
If $U \subset A$ is an open subset such that $X_U \isom U \x F$, then $(X_U)_\sg \isom U \x F_\sg$.
Since $X$ is assumed to be non-smooth, we necessarily have $F_\sg \ne \emptyset$.
It follows that $\dim X_\sg \ge \dim U = q(X)$.
Furthermore it is clear that $\dim X_\sg \le 1$, as $\dim X = 3$ and $X$ is normal.
\end{proof}

\begin{prp}[Kodaira embedding theorem] \label{prp:Kodaira}
Let $X$ be a normal compact \kahler space with rational singularities. If $H^2(X, \O_X) = 0$, then $X$ is projective.
\end{prp}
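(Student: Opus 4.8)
The plan is to reduce to the Kodaira embedding theorem for smooth compact \kahler manifolds by passing to a resolution of singularities, using the hypothesis on the singularities only to transport the vanishing of $H^2(-,\O)$ up to the resolution. First I would choose a projective resolution $f\!: \wt X \to X$; since $X$ is compact \kahler and $f$ is projective, $\wt X$ is a compact \kahler manifold. Because $X$ is normal we have $f_* \O_{\wt X} = \O_X$, and because $X$ has rational singularities $R^q f_* \O_{\wt X} = 0$ for $q > 0$. Hence the Leray spectral sequence $E_2^{p,q} = H^p \big( X, R^q f_* \O_{\wt X} \big) \Rightarrow H^{p+q}(\wt X, \O_{\wt X})$ degenerates at $E_2$ and yields $H^q(\wt X, \O_{\wt X}) \isom H^q(X, \O_X)$ for all $q$. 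In particular $H^2(\wt X, \O_{\wt X}) = 0$.

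Next I would invoke the smooth theory on $\wt X$. Since $\wt X$ is \kahler, Hodge theory gives $H^{0,2}(\wt X) = H^2(\wt X, \O_{\wt X}) = 0$, and therefore $H^{2,0}(\wt X) = \overline{H^{0,2}(\wt X)} = 0$ as well, so that $H^2(\wt X, \R)$ consists entirely of classes of type $(1,1)$. The \kahler cone is then a nonempty open cone inside all of $H^2(\wt X, \R)$; as the rational classes $H^2(\wt X, \Q)$ are dense, it contains a rational \kahler class. By the Kodaira embedding theorem for manifolds, $\wt X$ is projective.

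Finally I would descend projectivity from $\wt X$ to $X$. Being projective, $\wt X$ is Moishezon, and since the meromorphic function field is a bimeromorphic invariant, $X$ is Moishezon as well. As $X$ is moreover \kahler by hypothesis, it follows that $X$ is projective by the projectivity criterion for Moishezon \kahler spaces, which generalizes Moishezon's theorem to the singular normal setting.

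I expect this last step to be the main obstacle: all the earlier ingredients (Kähler-ness of a projective resolution, the rational-singularity comparison $H^2(\wt X, \O_{\wt X}) \isom H^2(X, \O_X)$, and the smooth Kodaira criterion via density of rational $(1,1)$-classes) are routine, whereas passing from ``Moishezon and \kahler'' to ``projective'' for a normal singular space is the genuinely nontrivial input and must be supplied by an appropriate projectivity criterion. As an alternative to the final descent, one could argue directly on $X$ via the exponential sequence: the vanishing $H^2(X, \O_X) = 0$ forces $c_1\!: \Pic(X) \to H^2(X, \Z)$ to be surjective, so that a rational \kahler class on $X$ would already be the class of a line bundle; this route, however, requires openness of the \kahler cone of the singular space $X$ inside $H^2(X, \R)$, which is why I prefer routing the Hodge-theoretic input through the smooth resolution.
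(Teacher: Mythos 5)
Your proposal is correct and follows essentially the same route as the paper: pass to a projective resolution, use rational singularities and Leray to get $H^2(\wt X, \O_{\wt X}) = 0$, find an integral \kahler class via openness of the \kahler cone in $H^2(\wt X, \R)$, apply Kodaira embedding to $\wt X$, and descend via the projectivity criterion for Moishezon \kahler spaces with rational singularities (the paper cites Namikawa for this last step). You also correctly identify that final descent as the one genuinely nontrivial input.
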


\begin{proof}
Let $f\!: \wt X \to X$ be a projective resolution of singularities.
Then $\wt X$ is a compact \kahler manifold.
Since $X$ has rational singularities, we have $H^2(\wt X, \O_{\wt X}) = H^2(X, \O_X) = 0$ by the Leray spectral sequence associated to $f$ and $\O_{\wt X}$.
Thus $H^{1,1}(\wt X) = H^2(\wt X, \C)$.
The \kahler cone of $\wt X$ is open in $H^{1,1}(\wt X) \cap H^2(\wt X, \R) = H^2(\wt X, \R)$, hence there exists an integral \kahler class on $\wt X$.
By the Kodaira embedding theorem~\cite[Thm.~7.11]{Voi02}, it follows that $\wt X$ is projective.
In particular, $X$ is Moishezon.
A Moishezon \kahler space with rational singularities is projective by~\cite[Thm.~1.6]{Nam02}.
\end{proof}

\begin{proof}[Proof of Theorem~\ref{thm:cy3 K=0}]
We show that if $X$ is not smooth, then it is projective.
By assumption, $X$ has only isolated singularities, so $\dim X_\sg = 0$.
Proposition~\ref{prp:q le 1} then implies that $q(X) = h^1(X, \O_X) = 0$.
The space $X$ has rational singularities, in particular it is Cohen--Macaulay.
Hence by Serre duality on complex spaces~\cite[Ch.~VII, Thm.~3.10]{BS76} we get $h^2(X, \omega_X) = h^1(X, \O_X) = 0$.
As the canonical sheaf of $X$ is assumed to be trivial, it follows that $H^2(X, \O_X)$ vanishes.
Then $X$ is projective by Proposition~\ref{prp:Kodaira}.
\end{proof}

\section{Finite group actions on sheaves and cohomology} \label{sec:finite grp actions}

In this section, we will consider a finite group acting on a coherent sheaf. The results we give may be well-known to experts. For example, Lemma~\ref{lem:cohom inv} is contained in an unpublished manuscript of Koll\'ar. However, we have been unable to find published proofs of these results.

We fix the following setup: $X$ is a normal complex space and $G$ is a finite group acting on $X$. We denote the quotient morphism by $\pi\!: X \to Y = X/G$.

\begin{dfn}[$G$-sheaves] \label{dfn:G-sheaf}
A \emph{$G$-sheaf} $\sF$ is a coherent sheaf on $X$ together with isomorphisms $\sigma(g)\!: g^* \sF \bij \sF$ satisfying $\sigma(gh) = \sigma(h) \circ h^* \sigma(g)$ for all $g, h \in G$.
If $G$ acts trivially on $X$, then the \emph{invariant subsheaf} $\sF^G \subset \sF$ is defined by
\[ \sF^G(U) := \sF(U)^G \quad \text{for any open subset $U \subset X$.} \]
Here $\sF(U)^G$ denotes the subgroup of $G$-invariant elements of $\sF(U)$.
\end{dfn}

If $\sF$ is a $G$-sheaf, the push-forward sheaf $\pi_* \sF$ is a $G$-sheaf on $Y$, where $G$ acts trivially on $Y$. The associated invariant subsheaf will be denoted by $(\pi_* \sF)^G$.

Note that there is a natural $G$-action on the cohomology groups $H^i(X, \sF)$. The following lemma shows that ``taking cohomology commutes with taking invariants''.

\begin{lem}[Cohomology of invariant subsheaves] \label{lem:cohom inv}
For all $i \ge 0$, there are natural isomorphisms
\[ H^i \big( Y, (\pi_* \sF)^G \big) \isom H^i(X, \sF)^G. \]
\end{lem}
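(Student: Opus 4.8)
The plan is to prove the isomorphism $H^i(Y, (\pi_*\sF)^G) \isom H^i(X,\sF)^G$ by combining two ingredients: first, that pushing forward along the finite map $\pi$ is exact on cohomology (so that $H^i(X,\sF) \isom H^i(Y, \pi_*\sF)$ naturally and $G$-equivariantly); and second, that on the quotient space $Y$, where $G$ acts trivially, taking $G$-invariants of a $G$-sheaf is exact and commutes with cohomology because $|G|$ is invertible in $\C$.

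\textbf{Step 1: Reduce to the quotient.} First I would observe that since $\pi\!: X \to Y$ is finite, it is in particular affine, so $R^j\pi_*\sF = 0$ for $j > 0$. The Leray spectral sequence for $\pi$ and $\sF$ then degenerates, giving natural isomorphisms $H^i(X,\sF) \isom H^i(Y, \pi_*\sF)$ for all $i$. These isomorphisms are $G$-equivariant: the $G$-action on the left-hand side (coming from the $G$-sheaf structure on $\sF$ together with the action on $X$) matches the $G$-action on $\pi_*\sF$ as a $G$-sheaf on $Y$ with trivial $G$-action on the base. Taking $G$-invariants on both sides yields $H^i(X,\sF)^G \isom H^i(Y, \pi_*\sF)^G$.

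\textbf{Step 2: Invariants commute with cohomology on $Y$.} It now remains to identify $H^i(Y, \pi_*\sF)^G$ with $H^i(Y, (\pi_*\sF)^G)$. Write $\sG := \pi_*\sF$, a $G$-sheaf on $Y$ with trivial action on the space. The key point is that the averaging (Reynolds) operator $e := \frac{1}{|G|}\sum_{g \in G} g$ is an idempotent endomorphism of $\sG$ whose image is exactly the invariant subsheaf $\sG^G$, and it splits $\sG$ as a direct sum $\sG \isom \sG^G \oplus \ker(e)$ of $G$-sheaves. Cohomology is additive over such direct sums and is functorial, so $e$ induces a corresponding idempotent on $H^i(Y,\sG)$ whose image is $H^i(Y, \sG^G)$ and which simultaneously projects onto the invariant part $H^i(Y,\sG)^G$. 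Hence $H^i(Y, \sG^G) \isom H^i(Y,\sG)^G$. Composing with Step 1 completes the proof.

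\textbf{Main obstacle.} The routine parts are the vanishing of higher direct images and the additivity of cohomology; the point requiring genuine care is the exactness of the invariants functor and the compatibility of the two $G$-actions. Concretely, I expect the main technical work to be in verifying that the sheaf decomposition $\sG = \sG^G \oplus \ker(e)$ is valid at the level of $\O_Y$-modules (so that one may apply cohomology termwise) and that the $G$-action induced on $H^i(X,\sF)$ via the $G$-sheaf structure is the \emph{same} action under which one takes invariants on the right-hand side of the claimed isomorphism; this bookkeeping of sign and composition conventions $\sigma(gh) = \sigma(h)\circ h^*\sigma(g)$ is where errors typically creep in. The use of $\frac{1}{|G|}$ is essential and is exactly why the statement holds over $\C$; no analogous argument would survive in positive characteristic dividing $|G|$.
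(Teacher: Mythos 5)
Your proposal is correct and rests on the same two pillars as the paper's own proof: the identification $H^i(X,\sF) \isom H^i(Y,\pi_*\sF)$ coming from finiteness of $\pi$, and the Reynolds averaging operator splitting the inclusion $(\pi_*\sF)^G \subset \pi_*\sF$. Your phrasing via the idempotent $e$ and the direct-sum decomposition $\pi_*\sF \isom (\pi_*\sF)^G \oplus \ker(e)$ is just a slightly more structured packaging of the paper's argument, which instead checks injectivity and surjectivity of the natural map directly using the same operator.
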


\begin{proof}
We have natural maps
\[ H^i \big( Y, (\pi_* \sF)^G \big) \lto H^i(Y, \pi_* \sF) \isom H^i(X, \sF), \]
the latter isomorphism coming from the fact that $\pi$ is finite.
Averaging over the group $G$ gives us a Reynolds operator $R\!: \pi_* \sF \surj (\pi_* \sF)^G$, that is, a splitting of the inclusion $(\pi_* \sF)^G \subset \pi_* \sF$.
Consequently, $H^i \big( Y, (\pi_* \sF)^G \big) \to H^i(X, \sF)$ is injective, and it clearly factorizes via $\phi\!: H^i \big( Y, (\pi_* \sF)^G \big) \to H^i(X, \sF)^G$. This is the map whose existence is claimed in the lemma, and it remains to show its surjectivity.
But the Reynolds operator $R$ induces a map $H^i(X, \sF) \to H^i \big( Y, (\pi_* \sF)^G \big)$, whose restriction to $H^i(X, \sF)^G$ is a right inverse to $\phi$. This ends the proof.
\end{proof}

Now we will specialize to the case where $\sF = \sT_X$ is the tangent sheaf of $X$, which is a $G$-sheaf in a natural way.

\begin{lem}[Trace map for vector fields] \label{lem:trace}
There is a natural morphism
\[ \tr\!: \pi_* \sT_X \lto \sT_Y, \]
called the trace map. If $G$ acts freely in codimension one, then $\tr$ induces an isomorphism $(\pi_* \sT_X)^G \to \sT_Y$. In particular, in this case we have
\[ H^i(Y, \sT_Y) \isom H^i(X, \sT_X)^G \quad \text{for all $i \ge 0$.} \]
\end{lem}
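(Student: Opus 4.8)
The plan is to construct the trace map by averaging derivations over $G$, to identify it with the invariant pushforward over the locus where $\pi$ is étale, and then to extend this identification across the branch locus by reflexivity. Throughout I use that $\sT_X = \Der(\O_X)$ is the sheaf of $\C$-derivations and that $\O_Y = (\pi_* \O_X)^G$. To build $\tr$, for an open $U \subset Y$ and a derivation $D \in \sT_X(\pi^{-1} U) = (\pi_* \sT_X)(U)$, I define $\tr(D)$ to be the map
\[ f \longmapsto \frac 1 {|G|} \sum_{g \in G} g^*\big( D(f) \big), \qquad f \in \O_Y(U) = \O_X(\pi^{-1} U)^G. \]
A short computation shows that $\tr(D)(f)$ is $G$-invariant (the averaging absorbs any $h \in G$), that Leibniz holds because $f_1, f_2 \in \O_Y$ are fixed by $G$, and that $\tr$ is $\O_Y$-linear since $g^* a = a$ for $a \in \O_Y$. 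Hence $\tr(D)$ is a derivation of $\O_Y$, i.e.\ a section of $\sT_Y$, and $\tr\colon \pi_* \sT_X \to \sT_Y$ is a natural morphism of sheaves, defined with no hypothesis on the action.

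Assuming now that $G$ acts freely in codimension one, I would analyse $\tr$ on the invariant subsheaf over the étale locus. Let $Y^\circ \subset Y$ be the largest open subset over which $\pi$ is étale; by hypothesis the ramification locus has codimension $\ge 2$, so $\codim_Y(Y \minus Y^\circ) \ge 2$, and $\pi$ is a local biholomorphism over $Y^\circ$. On a $G$-invariant $D$ the averaging is trivial, since $g^* D(f) = D(g^* f) = D(f)$ for invariant $f$; thus $\tr(D) = D|_{\O_Y}$ is simply the restriction of the derivation to invariant functions. Over $Y^\circ$ this restriction is an isomorphism $(\pi_* \sT_X)^G \to \sT_Y$: a vector field downstairs pulls back uniquely along the étale cover, its pullbacks are automatically $G$-invariant, and an invariant field upstairs is determined by its action on invariant functions. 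This standard étale descent of vector fields identifies $\tr|_{(\pi_* \sT_X)^G}$ with an isomorphism over $Y^\circ$.

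The main work, and the only step where I expect real care to be needed, is promoting this codimension-one isomorphism to a global one by reflexivity. The target $\sT_Y = \sHom_{\O_Y}(\Omega_Y^1, \O_Y)$ is reflexive, hence agrees with $j_*(\sT_Y|_{Y^\circ})$ for $j\colon Y^\circ \inj Y$. I would check that $(\pi_* \sT_X)^G$ is reflexive as well: a $G$-invariant vector field defined off a codimension $\ge 2$ subset of $Y$ corresponds, via the finite map $\pi$ (which preserves codimension between the normal spaces $X$ and $Y$), to a $G$-invariant vector field defined off a codimension $\ge 2$ subset of $X$; this extends uniquely to all of $X$ by reflexivity of $\sT_X$, and the extension stays $G$-invariant by uniqueness. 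Both sheaves being reflexive, the morphism $(\pi_* \sT_X)^G \to \sT_Y$ induced by $\tr$ is an isomorphism over $Y^\circ$ and its complement has codimension $\ge 2$, so it is an isomorphism on all of $Y$.

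Finally, the cohomological statement follows formally. Applying Lemma~\ref{lem:cohom inv} to the $G$-sheaf $\sT_X$ gives $H^i\big(Y, (\pi_* \sT_X)^G\big) \isom H^i(X, \sT_X)^G$, and composing with the isomorphism $(\pi_* \sT_X)^G \isom \sT_Y$ just established yields $H^i(Y, \sT_Y) \isom H^i(X, \sT_X)^G$ for all $i \ge 0$.
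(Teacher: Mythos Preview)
Your proof is correct and follows essentially the same approach as the paper: define the trace by averaging derivations, check it gives an isomorphism over the \'etale locus $Y^\circ$, and then extend across the codimension-two complement using reflexivity (the paper constructs the explicit inverse $\pi^*$ on $Y^\circ$ and extends it by reflexivity of $\sT_X$ and $\sT_Y$, whereas you argue directly that both source and target are reflexive---this is the same idea). The only cosmetic difference is normalization: the paper defines $\tr$ via the function trace $\tr_{X/Y}$ without the factor $\frac{1}{|G|}$, so its inverse is $\frac{1}{|G|}\tr$ rather than $\tr$ itself.
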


\begin{proof}
Let $\Der(\sA)$ denote the vector space of $\C$-linear derivations of a sheaf $\sA$ of $\C$-algebras.
On an open set $U \subset Y$, the trace map
\[ \tr(U)\!: \pi_* \sT_X(U) = \Der \big( \O_{\pi^{-1}(U)} \big) \lto \sT_Y(U) = \Der(\O_U) \]
is defined by sending $D \in \Der \big( \O_{\pi^{-1}(U)} \big)$ to
\[ \tr(D) := \tr_{X/Y} \circ D \circ \pi^*\!: \O_U \lto (\pi_* \O_X)\big|_U \lto (\pi_* \O_X)\big|_U \lto \O_U. \]
Here $\tr_{X/Y}$ is the usual trace map for functions~\cite[Def.~5.6]{KM98}. It is easy to check that $\tr(D)$ really is a derivation of $\O_U$. Note that this definition works for any finite surjective morphism of normal complex spaces.
Note also that, returning to vector field notation,
\begin{sequation} \label{eqn:tr}
\tr(\vec v)(y) = \sum_{\pi(x) = y} \d\pi \big( \vec v(x) \big) \in T_y Y
\end{sequation}%
for any smooth point $y \in Y$ over which $\pi$ is \'etale.

Now if the $G$-action on $X$ is free in codimension one, then $\pi$ is quasi-\'etale.
Let $q\!: X^\circ \to Y^\circ$ be the restriction of $\pi$ to the maximal open subset of $Y$ over which $\pi$ is \'etale. The differential of $q$ induces an isomorphism $\d q\!: \sT_{X^\circ} \to q^* \sT_{Y^\circ}$ and consequently there is a pullback map for vector fields
\[ \sT_{Y^\circ} \xrightarrow{\hspace{2.5em}} q_* q^* \sT_{Y^\circ} \xrightarrow{q_*(\d q)^{-1}} q_* \sT_{X^\circ}. \]
Since $\codim_Y (Y \minus Y^\circ) \ge 2$, reflexivity of $\sT_X$ and $\sT_Y$ allows us to extend this map uniquely to a map $\sT_Y \to \pi_* \sT_X$. The pullback of any vector field is clearly $G$-invariant, hence the last map factorizes via
\[ \pi^*\!: \sT_Y \lto (\pi_* \sT_X)^G. \]
Consider now the restriction
\[ \frac{1}{|G|} \tr\!: (\pi_* \sT_X)^G \lto \sT_Y. \]
Using~\eqref{eqn:tr}, it is easy to check that $\pi^*$ and $\frac{1}{|G|} \tr$ are inverse to each other. Hence $\tr$ gives an isomorphism $(\pi_* \sT_X)^G \bij \sT_Y$ as desired. The last statement follows from this and Lemma~\ref{lem:cohom inv}.
\end{proof}

\section{Group actions on miniversal deformations} \label{sec:grp versal}

The following result about finite group actions on the total space of a miniversal deformation and their quotients, split into two separate propositions, is used in the next section. It might be of independent interest.

\begin{prp}[Extending an action to the miniversal deformation] \label{prp:extG}
Let $Y$ be a compact complex manifold, and let $G$ be a finite group acting effectively on $Y$.
Assume that $\Def(Y)$ is smooth, i.e.~that $Y$ has no deformation obstructions.
Then there is a deformation $\pi\!: \mf Y \to \Delta$ of $Y$ over a smooth base $0 \in \Delta$ such that:
\begin{enumerate}
\item\label{itm:extG.1} The group $G$ acts on $\mf Y$ in a fibre-preserving way, extending the given action of $G$ on $\mf Y_0 \isom Y$.
\item\label{itm:extG.2} The Kodaira--Spencer map $\kappa(\pi)\!: T_0 \Delta \to H^1(Y, \sT_Y)$ is an isomorphism onto the $G$-invariant subspace $H^1(Y, \sT_Y)^G \subset H^1(Y, \sT_Y)$.
\end{enumerate}
\end{prp}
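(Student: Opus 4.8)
The plan is to build the $G$-equivariant deformation by appealing to the functorial properties of the miniversal deformation together with the averaging trick for finite groups. Since $\Def(Y)$ is assumed smooth, I can work with a genuine smooth germ and exploit the rigidity of the miniversal base under automorphisms.

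First I would recall that, by miniversality, every automorphism $\sigma \in G$ of $Y$ induces an automorphism of the deformation functor, and hence (since $\Def(Y)$ is miniversal) a germ automorphism $\rho(\sigma)\colon \Def(Y) \to \Def(Y)$ together with a compatible lift to the total space $\sY \to \Def(Y)$. Concretely, pulling back the miniversal family $\sY$ along $\sigma\colon Y \to Y$ (applied fibrewise over the base) gives another deformation of $Y$, which by versality is induced from $\sY$ by a unique base map; uniqueness of the differential (semiuniversality) forces these maps to compose correctly, so $\sigma \mapsto \rho(\sigma)$ is a genuine $G$-action on the germ $\Def(Y)$ lifting to $\sY$. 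On tangent spaces, $\mr d_0 \rho(\sigma)$ is exactly the natural action of $\sigma$ on $T_0 \Def(Y) = H^1(Y, \sT_Y)$. This realizes $G$ as acting on the smooth germ $(\Def(Y), 0)$, linearizably near the fixed point $0$.

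Next, since $G$ is finite acting on a smooth germ fixing $0$, I would \emph{linearize} the action: by averaging a local coordinate system (the Bochner/Cartan linearization argument for finite groups), there are $G$-equivariant coordinates in which $\rho$ acts linearly, i.e.\ via its differential on $T_0 \Def(Y) = H^1(Y, \sT_Y)$. I then set
\[ \Delta := \big( \Def(Y) \big)^G, \]
the fixed-point germ, which in the linearized coordinates is simply the linear subspace $H^1(Y, \sT_Y)^G \subset H^1(Y, \sT_Y)$; in particular $\Delta$ is smooth with $T_0 \Delta = H^1(Y, \sT_Y)^G$. I define $\mf Y := \sY \times_{\Def(Y)} \Delta$ to be the restriction of the miniversal family to $\Delta$. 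Because $G$ acts on $\sY$ covering the $G$-action on $\Def(Y)$, and $\Delta$ is the fixed locus, $G$ preserves $\mf Y$ and acts on it fibre-preservingly over $\Delta$ (each point of $\Delta$ being $G$-fixed), extending the given action on the central fibre $\mf Y_0 \isom Y$. This gives~\eqref{itm:extG.1}. Finally, the Kodaira--Spencer map of $\mf Y \to \Delta$ is the restriction of the Kodaira--Spencer isomorphism $T_0 \Def(Y) \bij H^1(Y, \sT_Y)$ to $T_0 \Delta = H^1(Y, \sT_Y)^G$, hence is an isomorphism onto $H^1(Y, \sT_Y)^G$, proving~\eqref{itm:extG.2}.

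The main obstacle I anticipate is making the lift $\sigma \mapsto (\rho(\sigma), \text{lift to } \sY)$ into a strictly associative group action rather than merely an action up to the ambiguity inherent in versality. Versality only guarantees base maps unique \emph{up to first order}, so a priori the assignment is a homomorphism only after passing to tangent spaces. The smoothness hypothesis on $\Def(Y)$ is exactly what lets me circumvent this: on a smooth germ, the miniversal deformation is actually \emph{universal} in a suitable sense once one has pinned down the first-order data, so the induced germ automorphisms are uniquely determined and compose strictly. I would therefore spend the most care verifying that $\rho$ is a bona fide action (cocycle condition $\rho(\sigma\tau) = \rho(\sigma)\rho(\tau)$) and that the lifts to $\sY$ can be chosen compatibly—this is where one uses the uniqueness clause in semiuniversality together with smoothness, and it is the technical heart of the argument. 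The linearization and fixed-locus steps are then routine.
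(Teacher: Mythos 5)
Your overall architecture matches the paper's: produce a $G$-action on the miniversal family, identify the fixed locus $\Delta = \Def(Y)^G$ as a smooth germ with tangent space $H^1(Y,\sT_Y)^G$, and restrict the family to $\Delta$. The last two steps are fine. The gap is in the first step, which you correctly identify as the technical heart but then dispose of with a claim that does not hold. Smoothness of $\Def(Y)$ does \emph{not} make the miniversal deformation universal: universality is governed by the behaviour of $h^0(Y_t,\sT_{Y_t})$ (infinitesimal automorphisms of the fibres), not by smoothness of the base, and in the application at hand $Y$ is a \cy manifold with $H^0(Y,\sT_Y)$ typically nonzero (e.g.\ a torus or a product with a torus factor). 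So versality gives you, for each $\sigma \in G$, a base map $\rho(\sigma)$ whose differential at $0$ is determined, but neither $\rho(\sigma)$ itself nor the isomorphism of families over it is unique --- and even when the base map happens to be unique, the lift to the total space can still be twisted by automorphisms of the family over the base. Consequently the cocycle condition $\rho(\sigma\tau)=\rho(\sigma)\rho(\tau)$, and a fortiori the strict compatibility of the lifts to $\sY$, is not a formal consequence of semiuniversality, and your argument does not establish it.

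The paper closes exactly this gap by quoting Rim's theorem on equivariant structures on versal deformations \cite[Cor.~on p.~225]{Rim80}: for a linearly reductive group $G$ (finite groups are linearly reductive in characteristic zero by Maschke's theorem \cite{Mas99}), the miniversal deformation can be equipped with a $G$-action on the total space making the projection equivariant and restricting to the given action on the central fibre. This is a genuine theorem, proved by an averaging/obstruction argument, not a formal consequence of versality plus smoothness. With that input, the paper takes the action on the base to be the linear $G$-action on $H^1(Y,\sT_Y)$ restricted to a $G$-invariant open neighbourhood $B$ (so no separate Bochner linearization step is needed), sets $\Delta = B^G$, and restricts the family, exactly as you do afterwards. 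To repair your proof, replace the paragraph claiming uniqueness and strict composition of the lifts by an appeal to \cite{Rim80}, or supply an independent proof of that equivariance statement.
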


\begin{prp}[Quotient of the miniversal deformation] \label{prp:extG2}
In the setting of Proposition~\ref{prp:extG}, put $X := Y/G$ and $\phi\!: \mf X := \mf Y/G \to \Delta$. Then $\phi$ is a locally trivial deformation of $X$.
Its Kodaira--Spencer map $\kappa(\phi)\!: T_0 \Delta \to H^1(X, \sT_X)$ fits into a commutative diagram
\begin{sequation} \label{eqn:KS}
\xymatrix{
T_0 \Delta \ar^-{\kappa(\pi)}[rr] \ar@{=}[d] & & H^1(Y, \sT_Y)^G \ar^{\frac 1 {|G|} \tr}[d] \\
T_0 \Delta \ar^-{\kappa(\phi)}[rr] & & H^1(X, \sT_X).
}
\end{sequation}%
If the $G$-action on $Y$ is free in codimension one, then $\phi\!: \mf X \to \Delta$ is the miniversal locally trivial deformation of $X$. In particular, in this case $\Deflt(X)$ is smooth.
\end{prp}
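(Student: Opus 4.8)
The plan is to establish the three assertions in order: local triviality of $\phi$, commutativity of the Kodaira--Spencer diagram~\eqref{eqn:KS}, and—in the quasi-\'etale case—miniversality of $\phi$ together with smoothness of $\Deflt(X)$. Throughout, write $p\colon Y \to X = Y/G$ for the quotient map.

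For local triviality, the key observation is that, since $Y$ is a smooth manifold and $\Delta$ is smooth, the total space $\mf Y$ is a complex manifold and $\pi$ is a \emph{locally trivial} deformation of $Y$. Fix $x \in X = \mf X_0$, choose a preimage $y \in Y$, and let $H := G_y$ be its stabilizer; a connected $H$-stable neighborhood $\mf V$ of $y$ in $\mf Y$ then models $\mf X$ near $x$ via $\mf V \to \mf V/H$. I would construct an $H$-equivariant trivialization of $\pi$ over $\mf V$ by peeling off the coordinates $t_1, \dots, t_k$ of $\Delta$ one at a time: at each stage lift the coordinate field $\partial/\partial t_i$ to $\mf V$, average it over $H$ to obtain an $H$-invariant lift $\bar w_i$ (which still projects to $\partial/\partial t_i$, as $H$ acts trivially on $\Delta$), and use its flow to split off one factor. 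The crucial point is that the flow of an $H$-invariant vector field is $H$-equivariant, so each step is $H$-equivariant and one never needs the various flows to commute. This yields an $H$-equivariant isomorphism $\mf V \cong (V \cap Y) \x \Delta$ over $\Delta$, with $H$ acting by its original action on $V \cap Y$ and trivially on $\Delta$; passing to $H$-quotients gives $\mf X \cong (V/H) \x \Delta$ near $x$. This is the asserted local triviality, and in particular $\phi$ is flat, proper, and has central fibre $X$, i.e.~it is a locally trivial deformation of $X$.

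For the commutativity of~\eqref{eqn:KS}, I would compute both Kodaira--Spencer maps by the standard \v Cech recipe, using the equivariant trivializations just constructed. Choosing a $G$-stable trivializing cover of $\mf Y$ and its image cover of $\mf X$, the transition data of $\phi$ is the push-forward under $p$ of the transition data of $\pi$; differentiating in a base direction $\partial_t$ shows that $\kappa(\pi)(\partial_t)$ is represented by a $G$-invariant $1$-cocycle in $\sT_Y$, while $\kappa(\phi)(\partial_t)$ is represented by its push-forward. Over the locus where $p$ is \'etale the push-forward of an invariant field equals $\tfrac1{|G|}\tr$ of it by the local formula~\eqref{eqn:tr}, and since sections of the reflexive sheaves $\sT_Y$, $\sT_X$ are determined on this dense open set, the two cocycles agree after applying the sheaf trace of Lemma~\ref{lem:trace}. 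Under the identification $H^1\big(X, (p_*\sT_Y)^G\big) \cong H^1(Y,\sT_Y)^G$ of Lemma~\ref{lem:cohom inv}, this says exactly that $\kappa(\phi)(\partial_t) = \tfrac1{|G|}\tr\big(\kappa(\pi)(\partial_t)\big)$. I expect the main technical work to lie in this bookkeeping—faithfully matching the geometric push-forward of the trivialization data with the sheaf-theoretic trace.

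Finally, assume the $G$-action is free in codimension one. Then Lemma~\ref{lem:trace} makes $\tfrac1{|G|}\tr\colon H^1(Y,\sT_Y)^G \to H^1(X,\sT_X)$ an isomorphism, so by~\eqref{eqn:KS} and~(\ref{prp:extG}.\ref{itm:extG.2}) the map $\kappa(\phi)\colon T_0\Delta \to H^1(X,\sT_X) = T_0\Deflt(X)$ is an isomorphism. Since $\phi$ is locally trivial, it is induced from the miniversal locally trivial family by a germ map $\mu\colon (\Delta,0) \to (\Deflt(X),0)$ with $\d_0\mu = \kappa(\phi)$. As $\Delta$ is smooth and $\d_0\mu$ is injective, $\mu$ embeds $\Delta$ as a smooth subgerm of $\Deflt(X)$ of dimension $\dim_\C T_0\Deflt(X)$; the chain $\dim\Delta \le \dim\Deflt(X) \le \dim_\C T_0\Deflt(X) = \dim\Delta$ then forces equality throughout, so $\Deflt(X)$ is smooth and $\mu$ is an isomorphism. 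Hence $\phi$ is the miniversal locally trivial deformation of $X$, completing the proof.
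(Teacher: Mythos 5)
Your proposal is correct and follows essentially the same route as the paper: averaged equivariant lifts of the base coordinate fields and their flows give group-equivariant trivializations that descend to the quotient, the Kodaira--Spencer classes are compared via \v Cech cocycles together with the local formula for the trace on the \'etale locus, and miniversality follows from the same dimension count. The only cosmetic differences are that you organize local triviality around the stabilizers $G_y$ of single points while the paper works with a $G$-stable cover pulled back from a Stein cover of $\mf X$ and averages over all of $G$, and that you deduce flatness from local triviality whereas the paper checks it separately via the trace splitting.
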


\subsection{Proof of Proposition~\ref{prp:extG}}

Let $\psi\!: \mf U_Y \to \Def(Y) =: B$ be the miniversal deformation of $Y$. The Kodaira--Spencer map $\kappa(\psi)\!: T_0 B \to H^1(Y, \sT_Y)$ is an isomorphism and consequently $B$ can be embedded as an open neighborhood of $0 \in H^1(Y, \sT_Y)$. The group $G$ acts linearly on $H^1(Y, \sT_Y)$ and after replacing $B$ by $\bigcap_{g \in G} g B$, we may assume that $B$ is $G$-invariant. In particular, $G$ also acts on $B$. Since the $G$-action is linear, the fixed point set $\Delta := B^G$ is smooth with tangent space $T_0 \Delta = H^1(Y, \sT_Y)^G$.

Any finite group is linearly reductive~\cite{Mas99}. Hence by~\cite[Cor.~on p.~225]{Rim80} the group $G$ acts on $\mf U_Y$ in such a way that $\psi$ is equivariant and the restriction to the central fibre is the original $G$-action.
Thus the deformation $\mf Y := \psi^{-1}(\Delta) \to \Delta$ has the required properties. \qed

\subsection{Proof of Proposition~\ref{prp:extG2}}

The proof is divided into four steps. We use the following notation:

\begin{dfn}[Projectable vector fields] \label{dfn:projectable}
Let $f\!: X \to Y$ be a surjective map of normal complex spaces. A vector field $\vec v$ on $X$ is said to be \emph{$f$-projectable} if there exists a (necessarily unique) vector field $\vec w$ on $Y$ such that $\d f(\vec v(x)) = \vec w(f(x))$ for all $x \in X$. In this case we write $f_* \vec v = \vec w$.
\end{dfn}

\subsubsection*{Step 1: $\phi$ is a deformation}
First we check that $\phi$ really is a deformation of $X$.
Denote by $q\!: \mf Y \to \mf X$ the quotient map. Since $\pi$ is proper and $q$ is surjective, $\phi$ is proper. Let $x \in \mf X$ be any point, and pick $y \in q^{-1}(x)$. The local ring $\O_{\mf Y, y}$ is a flat $\O_{\Delta, \pi(y)}$-module, and the trace map $\tr_{\mf Y/\mf X}$ shows that $\O_{\mf X, x}$ is a direct summand of $\O_{\mf Y, y}$. Hence $\O_{\mf X, x}$ is also a flat $\O_{\Delta, \phi(x)}$-module.

\subsubsection*{Step 2: Local triviality of $\phi$}
Let $\{ \cU_\alpha \}_{\alpha \in I}$ be a Stein open covering of $\mf X$, indexed by some set $I$. We put:
\begin{align*}
U_\alpha & = \cU_\alpha \cap X_0. & & \hspace{-5ex} \text{ Then $\{ U_\alpha \}$ is a Stein open covering of $X = X_0$.} \\
\cV_\alpha & = q^{-1}(\cU_\alpha). & & \hspace{-5ex} \text{ Then $\{ \cV_\alpha \}$ is a Stein open covering of $\mf Y$.} \\
V_\alpha & = \cV_\alpha \cap Y_0. & & \hspace{-5ex} \text{ Then $\{ V_\alpha \}$ is a Stein open covering of $Y = Y_0$.}
\end{align*}
Note that $G$ acts on each $\cV_\alpha$ and $V_\alpha$, and that $\cV_\alpha/G = \cU_\alpha$, $V_\alpha/G = U_\alpha$.

We may assume that $\Delta = B_\eps(0) \subset \C^d$ is a polydisc, where $d = \dim \Delta$ and $\C^d$ has coordinates $t_1, \dots, t_d$.
By Lemma~\ref{lem:quotient basis} below, after passing to a refinement of $\{ \cU_\alpha \}$ and consequently also a refinement of $\{ \cV_\alpha \}$, there exist for each $\alpha \in I$ vector fields $\vec v_\alpha^{\,1}, \dots, \vec v_\alpha^{\,d} \in \sT_{\mf Y}(\cV_\alpha)$ which are $\pi$-projectable with
\[ \pi_* \big( \vec v_\alpha^{\,i} \big) = \frac \del {\del t_i} \quad \text{for each $1 \le i \le d$.} \]
Averaging over the group $G$, we may assume that the vector fields $\vec v_\alpha^{\,i}$ are $G$-invariant. Let $\Pi_\alpha^i\!: \cV_\alpha \x \C \to \cV_\alpha$ be the local $\C^+$-action associated to $\vec v_\alpha^{\,i}$ as in Theorem~\ref{thm:vf grp action}.
We obtain an isomorphism $\Pi_\alpha$ as in the following commutative diagram
\begin{sequation} \label{eqn:1044}
\xymatrix{
V_\alpha \x \Delta \ar^-{\Pi_\alpha}_-\sim[rr] \ar_{\pr_2}[dr] & & \cV_\alpha \ar[dl]^\pi \\
& \Delta &
}
\end{sequation}%
by setting
\[ \Pi_\alpha(y, t_1, \dots, t_d) = \Pi_\alpha^d \Big( \Pi_\alpha^{d-1} \big( \cdots \Pi_\alpha^2 \big( \Pi_\alpha^1(y, t_1), t_2 \big), \dots, t_{d-1} \big), t_d \Big). \]
By the uniqueness assertion of Theorem~\ref{thm:vf grp action}, $G$-invariance of $\vec v_\alpha^{\,i}$ implies that we have $\Pi_\alpha^i(gy, t) = g\Pi_\alpha^i(y, t)$ for any $g \in G$, whenever both sides are defined.
This means that $\Pi_\alpha$ becomes $G$-equivariant if we let $G$ act on $V_\alpha \x \Delta$ by the given action $G \looparrowright Y$ on $V_\alpha$ and trivially on $\Delta$.
Hence forming the quotient of diagram~\eqref{eqn:1044} by $G$ yields an isomorphism
\begin{sequation} \label{eqn:1056}
\xymatrix{
U_\alpha \x \Delta \ar^-{\Phi_\alpha}_-\sim[rr] \ar_{\pr_2}[dr] & & \cU_\alpha \ar[dl]^\phi \\
& \Delta. &
}
\end{sequation}%
Since the $\cU_\alpha$ cover $\mf X$, diagram~\eqref{eqn:1056} shows that $\phi\!: \mf X \to \Delta$ is locally trivial.

\subsubsection*{Step 3: The Kodaira--Spencer map}
For the commutativity of~\eqref{eqn:KS}, let $v_0 \in T_0 \Delta$ be an arbitrary tangent vector, and pick a vector field $\vec v$ on $\Delta$ such that $\vec v(0) = v_0$.
For $\alpha \in I$, consider the vector fields
\begin{align*}
\vec v_\alpha := (\Pi_\alpha)_* (0, \vec v) & \in \sT_{\mf Y}(\cV_\alpha), \\
\vec w_\alpha := (\Phi_\alpha)_* (0, \vec v) & \in \sT_{\mf X}(\cU_\alpha).
\end{align*}
These are $\pi$- and $\phi$-projectable, respectively, with $(\pi_*\vec v_\alpha)(0) = (\phi_*\vec w_\alpha)(0) = v_0$. By the \v Cech cohomological description of $\kappa(\pi)$ and $\kappa(\phi)$, see Section~\ref{sec:deformation theory},
\begin{align*}
\kappa(\pi)(v_0) = \left[ \big( \vec v_\alpha|_{V_{\alpha\beta}} - \vec v_\beta|_{V_{\alpha\beta}} \big)_{\alpha, \beta} \right] & \in \check H^1 \big( \{ V_\alpha \}, \sT_Y \big) = H^1( Y, \sT_Y) \quad\text{and} \\
\kappa(\phi)(v_0) = \left[ \big( \vec w_\alpha|_{U_{\alpha\beta}} - \vec w_\beta|_{U_{\alpha\beta}} \big)_{\alpha, \beta} \right] & \in \check H^1 \big( \{ U_\alpha \}, \sT_X \big) = H^1( X, \sT_X).
\end{align*}
But $\vec w_\alpha = \frac{1}{|G|} \tr(\vec v_\alpha)$. Indeed, it suffices to check this on the maximal open subset of $\cU_\alpha$ over which the quotient map $q|_{\cV_\alpha}\!: \cV_\alpha \to \cU_\alpha$ is \'etale, and there it follows from the description~\eqref{eqn:tr} of the trace map. Thus $\kappa(\phi)(v_0) = \frac{1}{|G|} \tr \big( \kappa(\pi)(v_0) \big)$. Since $v_0 \in T_0 \Delta$ was chosen arbitrarily, we see that the diagram~\eqref{eqn:KS} commutes.

\subsubsection*{Step 4: Miniversality of $\phi$}
If $G$ acts freely in codimension one, then the map $\frac{1}{|G|} \tr$ in~\eqref{eqn:KS} is an isomorphism by Lemma~\ref{lem:trace}. In particular, $\kappa(\phi)$ is surjective.
The deformation $\phi$ induces a (non-unique) map $\mu\!: \Delta \to \Deflt(X)$, whose differential at $0 \in \Delta$ is exactly $\kappa(\phi)$.
Since $\Delta$ is smooth, this shows the first inequality in the following chain:
\[ \dim_0 \Deflt(X) \ge \dim \Delta = \dim_\C T_0 \Delta = \dim_\C T_0 \Deflt(X) \ge \dim_0 \Deflt(X). \]
So $\Deflt(X)$ is smooth, $\mu$ is an isomorphism, and $\phi$ is the miniversal locally trivial deformation of $X$. \qed

\begin{lem} \label{lem:quotient basis}
Let $Y$ be a Hausdorff topological space, $G$ a finite group acting on $Y$, and $q\!: Y \to X = Y/G$ the quotient map.
Then the connected components of the sets $q^{-1}(V)$, for $V \subset X$ open, form a basis for the topology of $Y$.
\end{lem}

\begin{proof}
Pick an open set $U \subset Y$ and a point $y \in U$.
We need to find an open set $q(y) \in V \subset X$ such that the connected component of $q^{-1}(V)$ containing $y$ is contained in $U$.

We may shrink $U$ around $y$, hence we may assume that $U$ is connected.
As $Y$ is Hausdorff, we may furthermore assume that $U \cap gU = \emptyset$ for all $g \in G$ not contained in $G_y$, the stabilizer of $y$.
Replacing $U$ by $\bigcap_{g \in G_y} gU$, we may assume that $U$ is stabilized by $G_y$.
Consider $V := q(U) \subset X$.
Then $q^{-1}(V) = \bigcup_{g \in G} gU = U \cup \bigcup_{g \not\in G_y} gU$, so $V$ is open and the connected component of $q^{-1}(V)$ containing $y$ is exactly $U$.
\end{proof}

\section{An approximation criterion for quotients} \label{sec:approx crit quotient}

In this section, we prove Theorem~\ref{thm:approx crit quotient intro} from the introduction.

\subsection{Variations of Hodge structures}

This preliminary section essentially consists of a few remarks on variations of Hodge structures in the presence of a finite group action, for which we could not find a reference.

\begin{lem} \label{lem:G VHS}
Let $\pi\!: \mf Y \to B$ be a proper submersion of complex manifolds whose fibres are \kahler, and suppose that a finite group $G$ is acting on $\mf Y$ in a fibre-preserving way.
Then for any $k \in \N$, the sheaf $(R^k \pi_* \Q)^G$ is locally constant and supports a variation of Hodge structure (VHS) on $B$, which is a sub-VHS of the natural VHS on $R^k \pi_* \Q$.
\end{lem}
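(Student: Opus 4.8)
The plan is to reduce the statement for $(R^k\pi_*\Q)^G$ to the already-known structure of the full local system $R^k\pi_*\Q$, exploiting that the $G$-action is fibre-preserving and that taking $G$-invariants is an exact functor on $\Q$-vector spaces (since $\frac1{|G|}\sum_{g\in G}g$ is a well-defined idempotent, as $G$ is finite and we work over a field of characteristic zero). First I would recall that for a proper submersion with \kahler fibres, Ehresmann's theorem gives that $R^k\pi_*\Q$ is a local system, and the classical theory (Griffiths, cf.~\cite{Voi02}) endows it with a polarized VHS: the Hodge filtration varies holomorphically and satisfies Griffiths transversality. All I need to show is that the $G$-invariant part inherits each of these structures.

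\textbf{Key steps.} The argument proceeds sheaf-theoretically, in three stages. First, local constancy of $(R^k\pi_*\Q)^G$: the group $G$ acts on the local system $R^k\pi_*\Q$ by automorphisms of local systems (because the action on $\mf Y$ commutes with $\pi$, it induces an action on the family of fibres compatible with parallel transport). The invariants of a local system under a finite group of local-system automorphisms form again a local system, being the image of the parallel (hence flat) idempotent projector $R = \frac1{|G|}\sum_{g\in G}g$; flatness of $R$ is exactly what guarantees that $(R^k\pi_*\Q)^G$ is preserved by monodromy and is therefore locally constant. Second, the Hodge structure: on each fibre $\mf Y_b$, the $G$-action is by biholomorphisms, so it respects the Hodge decomposition $H^k(\mf Y_b,\C)=\bigoplus_{p+q=k}H^{p,q}(\mf Y_b)$; hence $g^*$ preserves the Hodge filtration $F^\bullet$, and the invariant subspace $H^k(\mf Y_b,\Q)^G\otimes\C$ acquires the induced filtration $F^\bullet\cap(\,\cdot\,)^G$, which is again a Hodge filtration because the projector $R$ is a morphism of Hodge structures (a holomorphic automorphism induces a morphism of pure Hodge structures of weight $k$). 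Third, the holomorphic variation and Griffiths transversality of the induced filtration: these are inherited directly from the ambient VHS, since $(R^k\pi_*\Q)^G$ is a flat subsystem on which the Gauss--Manin connection restricts, and the holomorphic subbundles $F^p\cap(\mathcal V^G\otimes\O_B)$ are cut out by the flat projector $R$ acting on the holomorphic vector bundle $F^p$; transversality for the sub-VHS follows from transversality for the ambient one by restricting $\nabla$.

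\textbf{Sub-VHS property and the main obstacle.} To see that $(R^k\pi_*\Q)^G$ is a \emph{sub}-VHS rather than merely a VHS in its own right, I would observe that the inclusion $(R^k\pi_*\Q)^G\hookrightarrow R^k\pi_*\Q$ is a morphism of local systems (it is flat, being the inclusion of the image of a flat projector) and is fibrewise a morphism of Hodge structures, which is precisely the definition of a morphism of variations of Hodge structure; its image is therefore a sub-VHS. The one point requiring genuine care — and the step I expect to be the main obstacle — is verifying that $R=\frac1{|G|}\sum_g g$ is simultaneously (i) a flat endomorphism of the local system and (ii) a morphism of the underlying holomorphic vector bundle compatible with the Hodge filtration, so that its image is a \emph{holomorphic} subbundle with filtration of constant rank. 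The flatness in (i) is clean, but (ii) needs the observation that each $g$ acts holomorphically on the total space and hence induces a holomorphic (indeed flat) bundle automorphism preserving $F^\bullet$; granting this, the image of the idempotent $R$ is a direct summand, hence a holomorphic subbundle of locally constant rank, and the Hodge numbers of the invariant VHS are constant on $B$. Once this compatibility of $R$ with all three structures (flat, holomorphic, filtered) is pinned down, the remaining verifications are formal.
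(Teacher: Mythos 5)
Your proof is correct and follows essentially the same route as the paper: both arguments reduce everything to the observation that each $g^*$ is a morphism of the VHS $R^k\pi_*\Q$, so that the invariants are cut out as a sub-object by a VHS morphism. The only cosmetic difference is that you realize $(R^k\pi_*\Q)^G$ as the image of the flat idempotent $\frac{1}{|G|}\sum_{g\in G}g^*$, whereas the paper takes the kernel of $\bigoplus_{g\in G}(g^*-\id)$; your extra verifications (flatness, preservation of $F^\bullet$, transversality) are exactly the content packed into the phrase ``morphism of VHS.''
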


\begin{proof}
Any $g \in G$ induces $g^*\!: R^k \pi_* \Q \to R^k \pi_* \Q$, a morphism of VHS.
The sheaf $(R^k \pi_* \Q)^G$ is the kernel of the following morphism of VHS:
\[ \bigoplus_{g \in G} (g^* - \id)\!: R^k \pi_* \Q \to (R^k \pi_* \Q)^{\oplus |G|}. \]
Hence it is a sub-VHS of $R^k \pi_* \Q$.
\end{proof}

\begin{rem} \label{rem:G VHS}
In Lemma~\ref{lem:G VHS}, the stalk of $(R^k \pi_* \Q)^G$ at a point $t \in B$ is $H^k(Y_t, \Q)^G$, and the fibre of $(R^k \pi_* \Q)^G \tensor_\Q \O_B$ over $t$ is $H^k(Y_t, \C)^G$.
The filtration $F^\bullet$ on $H^k(Y_t, \C)^G$ induced by the Hodge filtration on $(R^k \pi_* \Q)^G \tensor_\Q \O_B$ satisfies
\[ F^p \left( H^k(Y_t, \C)^G \right) = \bigoplus_{i \ge p} H^{i,k-i}(Y_t)^G, \]
i.e.~it is exactly the natural Hodge filtration on $H^k(Y_t, \Q)^G$.
In particular, if $k = 2$, it satisfies
\[ F^2 \oplus \overline{F^1} = H^2(Y_t, \C)^G. \]
This property is important for applying~\cite[Prop.~5.20]{Voi03}, which we will do in a minute.
\end{rem}

\subsection{Proof of Theorem~\ref{thm:approx crit quotient intro}}

We may assume that $G$ acts effectively on $Y$.
Let $\pi\!: \mf Y \to \Delta$ be a deformation of $Y$ as given by Proposition~\ref{prp:extG}.
By Lemma~\ref{lem:G VHS}, the sheaf $(R^2 \pi_* \Q)^G$ supports a variation of Hodge structures, which we denote by $(H^G, F^\bullet \cH^G, \nabla^G)$.
Since the class of $\omega$ is $G$-invariant by assumption, the connection $\nabla^G$ on $\cH^G$ induces a map
\[ \overline{\nabla^G}_0^{1,1}(\omega)\!: T_0 \Delta \lto H^2(Y, \O_Y)^G \]
as in~\cite[Sec.~10.2.2]{Voi02}.

\begin{clm} \label{clm:nablaG}
The map $\overline{\nabla^G}_0^{1,1}(\omega)$ is surjective.
\end{clm}

\begin{proof}[Proof of Claim~\ref{clm:nablaG}]
The Gau\ss--Manin connection $\nabla$ on $R^2 \pi_* \Q$ induces a map
\[ \overline\nabla_0^{1,1}(\omega)\!: T_0 \Delta \lto H^2(Y, \O_Y), \]
and since $H^G$ is a sub-VHS of $R^2 \pi_* \Q$, it is clear that $\overline\nabla_0^{1,1}(\omega) = \iota \circ \overline{\nabla^G}_0^{1,1}(\omega)$, where $\iota\!: H^2(Y, \O_Y)^G \inj H^2(Y, \O_Y)$ is the inclusion.
By~\cite[Thm.~10.21]{Voi02}, we have $\overline\nabla_0^{1,1}(\omega) = \alpha \circ \kappa(\pi)$, where $\kappa(\pi)\!: T_0 \Delta \to H^1(Y, \sT_Y)$ is the Kodaira--Spencer map.
Taking $G$-invariants yields a map
\[ \alpha^G\!: H^1(Y, \sT_Y)^G \xrightarrow{[\omega] \cup -} H^2(Y, \Omega_Y^1 \tensor \sT_Y)^G \xrightarrow{\text{contraction}} H^2(Y, \O_Y)^G, \]
which is surjective since taking invariants under a finite group is an exact functor~\cite{Mas99}.
Since $\kappa(T_0 \Delta) = H^1(Y, \sT_Y)^G$, we obtain that $\overline{\nabla^G}_0^{1,1}(\omega) = \alpha^G \circ \kappa(\pi)$ is surjective, as desired.
\end{proof}

Let $\big( \cH^G \big)^{1,1}_\R$ be the real sub-vector bundle of $\cH^G$ whose fibre over $t \in \Delta$ is $H^{1,1}(Y_t)^G \cap H^2(Y_t, \R)^G \subset H^2(Y_t, \C)^G$.
Pick a neighborhood $[\omega] \in U \subset \big( \cH^G \big)^{1,1}_\R$ such that any class in $U$, lying over say $t \in \Delta$, is a \kahler class on $Y_t$.
By Claim~\ref{clm:nablaG} and Remark~\ref{rem:G VHS}, we may apply~\cite[Prop.~5.20]{Voi03} to $H^G$ and conclude that the set
\[ \Delta_\Q := \{ t \in \Delta \;|\; U \cap H^2(Y_t, \Q)^G \ne \emptyset \} \subset \Delta \]
is dense near $0 \in \Delta$, i.e.~its closure $\overline{\Delta_\Q}$ contains an open neighborhood of $0 \in \Delta$.
Now look at the deformation $\phi\!: \mf X := \mf Y/G \to \Delta$ of $X$.
By Proposition~\ref{prp:extG2}, $\phi$ is a locally trivial deformation of $X$, and for~(\ref{thm:approx crit quotient intro}.\ref{itm:acqi.1}) it remains to be proven that $\phi$ is a strong algebraic approximation.

It is sufficient to show that $\Delta_\Q \subset \Delta_\phi^{\mathrm{alg}}$ in the sense of Definition~\ref{dfn:alg approx}.
If $t \in \Delta_\Q$, then $Y_t$ carries a rational \kahler class and hence it is projective by the Lefschetz theorem on $(1, 1)$-classes and the Kodaira embedding theorem~\cite[Thm.~7.11]{Voi02}.
Then also $X_t = Y_t/G$ is projective by~\cite[Ch.~IV, Prop.~1.5]{Knu71}, so $t \in \Delta_\phi^{\mathrm{alg}}$.

If the $G$-action on $Y$ is free in codimension one, then Proposition~\ref{prp:extG2} shows that $\Deflt(X)$ is smooth and $\phi$ is the miniversal locally trivial deformation of $X$. We have just seen that $\phi$ is a strong algebraic approximation of $X$, hence~(\ref{thm:approx crit quotient intro}.\ref{itm:acqi.2}) is also proved. \qed

\section{Proof of main results} \label{sec:main results}

In this section, we will prove~(\ref{thm:approx intro}.\ref{itm:approx intro.2}), (\ref{thm:approx intro}.\ref{itm:approx intro.1}), and Theorem~\ref{thm:bim approx intro}, in this order.

\subsection{Auxiliary results}

Theorem~\ref{thm:approx intro}.\ref{itm:approx intro.1} is reduced to~(\ref{thm:approx intro}.\ref{itm:approx intro.2}) by means of the following construction.

\begin{prp}[Crepant terminalizations] \label{prp:crep term}
Let $X$ be a normal compact \kahler threefold with canonical singularities. Then there exists a projective bimeromorphic morphism $f\!: Y \to X$ from a normal compact \kahler threefold $Y$ with \Q-factorial terminal singularities such that
\[ K_Y \sim_\Q f^* K_X. \]
We say that $f\!: Y \to X$ is a crepant terminalization of $X$.
\end{prp}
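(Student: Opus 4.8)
The plan is to produce $Y$ by running a relative \mmp over $X$, starting from an arbitrary resolution of singularities and contracting back down to a terminal model. First I would choose a projective resolution $g\!: \wt X \to X$, which exists by~\cite[Thm.~3.45]{Kol07}; since $X$ is compact \kahler and $g$ is projective, $\wt X$ is a compact \kahler manifold, in particular terminal and \Q-factorial. Because $X$ has canonical singularities, the discrepancies of $g$ are all non-negative, so we may write $K_{\wt X} \sim_\Q g^* K_X + E$ with $E \ge 0$ an effective $g$-exceptional \Q-divisor.

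Next I would run a \mmp for $K_{\wt X}$ relative to $X$. The key point is that $g$ is a \emph{projective} morphism, so every extremal ray to be contracted is spanned by a curve contracted by the structure morphism to $X$, and each divisorial contraction and flip in this relative program is again projective over $X$; hence the total space remains compact \kahler at every step, since a projective morphism over a compact \kahler space has \kahler total space. Termination holds in dimension three, and the requisite cone, contraction and flip statements over the compact \kahler base are provided by the \kahler \mmp of~\cite{HP15, CHP15}. As terminality and \Q-factoriality are preserved by each step and $\wt X$ is smooth, the output $f\!: Y \to X$ is a \Q-factorial terminal compact \kahler threefold, projective over $X$, with $K_Y$ nef over $X$.

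It then remains to check crepancy. Writing $K_Y \sim_\Q f^* K_X + E_Y$, where $E_Y \ge 0$ is the strict transform of $E$, again effective and $f$-exceptional, the \Q-divisor $E_Y$ is $f$-nef because $K_Y$ is $f$-nef while $f^* K_X$ is $f$-trivial. Applying the negativity lemma~\cite[Lem.~3.39]{KM98} to $B := -E_Y$, whose negative $-B = E_Y$ is $f$-nef and whose pushforward $f_* B = 0$ is effective, yields $B \ge 0$, that is $E_Y \le 0$; together with $E_Y \ge 0$ this forces $E_Y = 0$, so $K_Y \sim_\Q f^* K_X$ and $f$ is crepant.

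The main obstacle is the relative \mmp step: one must ensure that the \kahler \mmp for threefolds of~\cite{HP15, CHP15}, developed in the absolute setting, can be run relative to the compact \kahler (non-projective) base $X$, and that the \kahler property is genuinely preserved throughout. Since $g$ is projective, this should reduce the relative program to the projective \mmp in the fibres combined with the global cone and contraction theorems available in the \kahler category, but making this precise is the technical heart of the argument.
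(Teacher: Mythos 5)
Your argument is correct in substance, and it is essentially the standard construction of a crepant terminalization; but the paper takes a much shorter route. It simply invokes~\cite[Thm.~2.3]{CHP15}, which already produces a projective bimeromorphic morphism $f\colon Y \to X$ from a \Q-factorial terminal compact \kahler threefold together with an $f$-exceptional \emph{boundary} divisor $\Delta_Y$ (coefficients in $[0,1]$) satisfying $K_Y + \Delta_Y \sim_\Q f^*K_X$; since $X$ is canonical the discrepancies are nonnegative, forcing $\Delta_Y \le 0$ and hence $\Delta_Y = 0$. In other words, the nefness-plus-negativity-lemma squeeze that you carry out explicitly is already packaged inside the cited theorem, and the paper only has to add the one-line observation that canonicity kills the boundary. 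What your version buys is transparency: it shows exactly where the terminal model comes from (relative MMP over $X$ starting from a projective resolution) and isolates the crepancy argument cleanly via~\cite[Lem.~3.39]{KM98}. What it costs is precisely the point you flag yourself: you must justify running a relative MMP over the non-projective compact \kahler base $X$. Here your attribution could be sharpened: since $g\colon \wt X \to X$ is a \emph{projective} bimeromorphic morphism, the relative cone, contraction, flip and termination statements you need are not really the new absolute \kahler MMP of~\cite{HP15, CHP15} but the classical relative three-fold MMP for projective morphisms of complex spaces (Mori, Koll\'ar--Mori, Nakayama), which is also what underlies the proof of~\cite[Thm.~2.3]{CHP15}; the preservation of the \kahler property along the way follows, as you say, from the fact that a projective morphism over a compact \kahler space has \kahler total space. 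Two minor points: your identification of the discrepancy divisor $E_Y$ with the strict transform of $E$ is harmless but is best phrased as ``$E_Y = \sum a(E_i, X)\,E_i^Y$ over the $f$-exceptional divisors,'' since discrepancies are invariants of the valuations; and you should note that $K_{\wt X}$ is relatively effective over $X$ (being $\sim_\Q E$ relatively), so the relative MMP indeed terminates in a minimal model rather than a Mori fibre space.
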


\begin{proof}
By~\cite[Thm.~2.3]{CHP15}, there exists a projective bimeromorphic morphism $f\!: Y \to X$ from a normal compact \kahler threefold $Y$ with \Q-factorial terminal singularities and an $f$-exceptional boundary divisor $\Delta_Y$ on $Y$ such that $K_Y + \Delta_Y \sim_\Q f^* K_X$. Here by a boundary divisor we mean a Weil divisor with coefficients in $[0, 1] \cap \Q$.
On the other hand, we have $\Delta_Y \le 0$ by the definition of canonical singularities. Hence $\Delta_Y = 0$, and the proposition is proved.
\end{proof}

\begin{prp}[Abundance for \cy threefolds] \label{prp:cy3 abundance}
Let $X$ be a \cy threefold. Then the canonical sheaf $\omega_X$ of $X$ is torsion, meaning that some reflexive tensor power is trivial: $\omega_X^{[m]} \isom \O_X$ for some $m > 0$. In particular, $\kappa(X) = 0$.
\end{prp}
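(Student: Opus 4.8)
The plan is to deduce the statement from abundance for K\"ahler threefolds, after reducing to the terminal case. First I would record that, since $X$ has canonical (hence klt) singularities, the canonical sheaf $\omega_X$ is \Q-Cartier, so that $\omega_X^{[m]}$ is a genuine line bundle for some $m > 0$ and $c_1(X) \in H^2(X, \R)$ is defined. The hypothesis $c_1(X) = 0$ then says precisely that $c_1 \big( \omega_X^{[m]} \big) = -m\, c_1(X) = 0$, i.e.~$K_X$ is numerically trivial; in particular $K_X$ is nef.

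Next I would pass to a crepant terminalization $f\!: Y \to X$ as provided by Proposition~\ref{prp:crep term}, so that $Y$ is a \Q-factorial terminal compact \kahler threefold with $K_Y \sim_\Q f^* K_X$. Since $c_1 \big( \omega_Y^{[m]} \big) = f^* c_1 \big( \omega_X^{[m]} \big) = 0$, the space $Y$ again satisfies $c_1(Y) = 0$; it is thus a terminal \cy threefold, in particular a minimal \kahler threefold, and $K_Y$ is nef. This reduction is exactly why Proposition~\ref{prp:crep term} is placed just before the present statement: abundance is available in the literature for minimal models, so I want to work with $Y$ rather than $X$.

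The key input is then abundance for \kahler threefolds, due to H\"oring--Peternell and Campana--H\"oring--Peternell~\cite{HP15, CHP15}: since $K_Y$ is nef, it is semiample. As $K_Y$ is moreover numerically trivial, its numerical dimension vanishes, so the associated semiample fibration contracts $Y$ to a point; concretely, for a suitable $k > 0$ the line bundle $\omega_Y^{[k]}$ is globally generated, and being numerically trivial on a compact connected space it is then trivial, $\omega_Y^{[k]} \isom \O_Y$. This abundance step is the only nontrivial ingredient and is the main obstacle: everything else is formal bookkeeping, and I am simply black-boxing the recently established \kahler MMP.

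Finally I would descend along $f$. From $K_Y \sim_\Q f^* K_X$ and $k K_Y \sim 0$ one obtains, for $N$ a suitable multiple of $k$ chosen so that $N K_X$ is Cartier, the isomorphisms $f^* \O_X(N K_X) \isom \O_Y(N K_Y) \isom \O_Y$. Applying $f_*$ and the projection formula, together with $f_* \O_Y = \O_X$ (as $f$ is birational and $X$ is normal), yields $\omega_X^{[N]} \isom \O_X$. This shows that $\omega_X$ is torsion, and in particular $\kappa(X) = 0$.
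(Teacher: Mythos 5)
Your proposal is correct and follows essentially the same route as the paper: pass to a crepant terminalization $Y \to X$, invoke abundance for \kahler threefolds \cite[Thm.~1.1]{CHP15} to get $mK_Y$ globally generated, observe that a globally generated (hence effective) numerically trivial divisor on a compact \kahler space must be trivial, and push forward along $f$. The only cosmetic difference is that the paper phrases the triviality of $mK_Y$ via effectivity plus $c_1(Y)=0$ rather than via the semiample fibration contracting to a point, but these are the same observation.
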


\begin{proof}
Let $f\!: Y \to X$ be a crepant terminalization of $X$. Then $c_1(Y) = 0$, in particular $K_Y$ is nef. By~\cite[Thm.~1.1]{CHP15}, the divisor $K_Y$ is semiample, i.e.~some positive multiple $mK_Y$ is globally generated (in particular, effective).
But then $mK_Y \sim 0$ since $c_1(Y) = 0$ and $Y$ is \kahler.
This implies $mK_X = f_*(mK_Y) \sim 0$, so $\omega_X^{[m]} = \O_X(mK_X) \isom \O_X$.
\end{proof}

The following general result was already mentioned in the introduction.

\begin{lem}[Stability of \cy spaces] \label{lem:cy deformation}
Let $X$ be a \cy space in the sense of Definition~\ref{dfn:cy}.
If $\pi\!: \mf X \to B$ is a deformation of $X = X_0$, then $X_t$ is again a \cy space for $t \in B$ sufficiently close to $0$.
\end{lem}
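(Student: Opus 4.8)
The properties defining a \cy space split into \emph{local} ones (normality and canonicity of the singularities) and \emph{global} ones (being \kahler, and the vanishing $c_1=0$); compactness of $X_t$ is automatic since $\pi$ is proper. I would begin with two harmless reductions. Replacing $B$ by its reduction alters neither the fibres $X_t$ nor the flatness of $\pi$, so I may assume $B$ reduced, and since a point $t$ near $0$ only sees an arbitrarily small representative of the germ I may shrink $B$ at will. In particular I may arrange that the compact analytic subset $X_0$ is a deformation retract of $\mf X$, so that restriction induces an isomorphism $H^2(\mf X,\R)\bij H^2(X_0,\R)$. This last point is the mechanism by which the vanishing of $c_1$ will propagate to the nearby fibres, so I record it at the outset.

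For normality I would invoke the openness of normality in flat families: the set of $t$ for which $X_t$ is normal is open, so after shrinking $B$ every $X_t$ is normal, hence in particular reduced and, by properness together with the connectedness of $X_0$, connected. For the \kahler property I would appeal to the stability of the \kahler condition under small deformations of compact \kahler spaces, which after a further shrinking guarantees that all fibres $X_t$ remain \kahler.

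The heart of the matter, and the step I expect to be the main obstacle, is the preservation of canonical singularities together with the $\Q$-Gorenstein nature of the total space. Here I would cite Kawamata's theorem on deformations of canonical singularities: since $X_0$ has canonical singularities and is $\Q$-Gorenstein, with some reflexive power $\omega_{X_0}^{[m]}$ invertible, after shrinking $B$ the total space $\mf X$ is again $\Q$-Gorenstein, the reflexive power $\omega_{\mf X/B}^{[m]}$ is an invertible sheaf whose formation commutes with restriction to fibres, and every fibre $X_t$ has canonical singularities. The delicate content absorbed into this citation is precisely that the $\Q$-Gorenstein condition is inherited by the family (so that the relative canonical power is a genuine line bundle) and that canonicity is open on the fibres; without this input the argument has no foundation. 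Should the global $\Q$-Gorenstein statement require care, one can instead work locally around each singular point of $X_0$ via index-one covers, where the singularities become Gorenstein canonical and rationality deforms by Elkik's theorem, Gorensteinness being manifestly open in a flat family.

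Finally I would propagate $c_1=0$. Write $\Lambda:=\omega_{\mf X/B}^{[m]}$, an invertible sheaf on $\mf X$ by the previous step, with $\Lambda|_{X_t}\isom\omega_{X_t}^{[m]}$ for every $t$. Its first Chern class $c_1(\Lambda)\in H^2(\mf X,\R)$ restricts on the central fibre to $c_1(\omega_{X_0}^{[m]})=-m\,c_1(X_0)=0$. Since restriction $H^2(\mf X,\R)\to H^2(X_0,\R)$ is an isomorphism by the setup, we conclude $c_1(\Lambda)=0$ in $H^2(\mf X,\R)$, and therefore $c_1(\omega_{X_t}^{[m]})=c_1(\Lambda)|_{X_t}=0$, that is $c_1(X_t)=0$, for every $t$. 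Assembling the four steps, $X_t$ is a normal compact \kahler space with canonical singularities and vanishing first Chern class, hence a \cy space, for all $t$ sufficiently close to $0$.
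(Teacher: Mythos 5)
Your proposal is correct and follows essentially the same route as the paper: Kähler-ness of nearby fibres, Kawamata's theorem to get canonical singularities and the $\Q$-Gorenstein property of the total space, and propagation of $c_1=0$ via the topological constancy of $H^2$ along the family (you phrase this with a deformation retract of $\mf X$ onto $X_0$, the paper with a section of $R^2\pi_*\Z$ and proper base change --- the same mechanism). One caveat: stability of the \kahler property under small deformations is \emph{not} known for arbitrary compact \kahler spaces, so you should cite the version requiring rational singularities (Namikawa, as the paper does), which is available here since canonical implies rational.
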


\begin{proof}
In what follows, we shrink $B$ around $0$ if necessary without further mention.

Since $X$ is \kahler and has rational singularities, also $X_t$ is \kahler by~\cite[Prop.~5]{Nam01}.
Since $X$ has canonical singularities, so do $\mf X$ and $X_t$ thanks to~\cite[Main~Theorem]{Kaw99}.
In particular, $\omega_{\mf X}$ is \Q-Cartier, i.e.~$\omega_{\mf X}^{[m]}$ is a line bundle for suitable $m > 0$.
The restriction of $\omega_{\mf X}^{[m]}$ to $X_t$ is $\omega_{X_t}^{[m]}$, for any $t \in B$, since this holds on the smooth locus of the normal space $X_t$.
The first Chern class of $\omega_{\mf X}^{[m]}$ then defines a section $s \in H^0(B, R^2 \pi_* \Z)$ whose germ at $t \in B$ is
\[ s_t = -mc_1(X_t) \in (R^2 \pi_* \Z)_t = H^2(X_t, \Z). \]
(Use~\cite[Ch.~IV, Eqn.~(13.6$'$) on p.~232]{Dem12} for the last equality.)
So we see that $c_1(X_0) = 0$ implies $c_1(X_t) = 0$ for all $t \in B$.
\end{proof}

\subsection{Proof of~(\ref{thm:approx intro}.\ref{itm:approx intro.2})}

First, we will assume that $X$ is non-projective.
Let $X_1 \to X$ be the index one cover of $X$~\cite[Def.~5.19]{KM98}. Then $\omega_{X_1} \isom \O_{X_1}$, and $X_1$ has isolated canonical singularities by~\cite[Prop.~5.20]{KM98}. Furthermore there is a finite (cyclic) group $G$ acting on $X_1$ freely in codimension one such that $X = X_1 / G$.

By~\cite[Ch.~IV, Prop.~1.5]{Knu71}, $X_1$ is non-projective since $X$ is.
Hence $X_1$ is smooth by Theorem~\ref{thm:cy3 K=0}.
\PreprintAndPublication
{By the Bogomolov--Tian--Todorov theorem~\cite[Cor.~2]{Ran92}, the deformations of $X_1$ are unobstructed.
Let $[\omega] \in H^{1,1}(X_1)$ be any \kahler class, and consider the diagram
\[ \xymatrix{
H^1(X_1, \Omega_{X_1}^2) \ar^-{[\omega] \cup -}[rr] & & H^2(X_1, \omega_{X_1}) \\
H^1(X_1, \sT_{X_1}) \ar^-{\alpha_{[\omega]}}[rr] \ar_-{\wr}[u] & & H^2(X_1, \O_{X_1}) \ar_-{\wr}[u] \\
} \]
where the vertical maps depend on the choice of an isomorphism $\omega_{X_1} \isom \O_{X_1}$, and $\alpha_{[\omega]}$ is the map defined before Theorem~\ref{thm:approx crit quotient intro}.
It is elementary to see that this diagram commutes.
By the Hard Lefschetz theorem~\cite[Thm.~6.25]{Voi02}, the upper horizontal map is surjective, and then so is $\alpha_{[\omega]}$.

Replacing $[\omega]$ by the averaged sum $\frac{1}{|G|} \sum_{g \in G} g^*[\omega]$, we may assume that $[\omega]$ is $G$-invariant.
Apply~(\ref{thm:approx crit quotient intro}.\ref{itm:acqi.2}) to conclude that the miniversal locally trivial deformation $\sX' \to \Deflt(X)$ of $X$ is a strong algebraic approximation of $X$.}
{By the Bogomolov--Tian--Todorov theorem~\cite[Cor.~2]{Ran92} and the Hard Lefschetz theorem~\cite[Thm.~6.25]{Voi02}, $X_1$ satisfies the assumptions of~(\ref{thm:approx crit quotient intro}.\ref{itm:acqi.2}).
We conclude that the miniversal locally trivial deformation $\sX' \to \Deflt(X)$ of $X$ is a strong algebraic approximation of $X$.}

Now consider the general case, i.e.~$X$ is not necessarily non-projective.
We need to show that $\overline{\Deflt(X)^{\mathrm{alg}}} = \Deflt(X)$.
So let $t \in \Deflt(X)$ be arbitrary. By Lemma~\ref{lem:cy deformation}, the fibre $X'_t$ over $t$ is again a \cy threefold with isolated singularities.
We may assume that $X'_t$ is non-projective, in which case we already know that $X'_t$ admits a locally trivial algebraic approximation.
But the deformation $\sX' \to (\Deflt(X), t)$ is a versal locally trivial deformation of $X'_t$, by ``openness of versality''~\cite[Rem.~5.8]{FK87}.
Hence $t \in \overline{\Deflt(X)^{\mathrm{alg}}}$, which implies the claim. \qed

\subsection{Proof of Theorem~(\ref{thm:approx intro}.\ref{itm:approx intro.1})}

By Proposition~\ref{prp:crep term}, there exists a crepant terminalization $f\!: Y \to X$ of $X$. Then $Y$ is a normal compact \kahler threefold with $c_1(Y) = 0$ and terminal, in particular isolated and canonical singularities~\cite[Cor.~5.18]{KM98}.
By~(\ref{thm:approx intro}.\ref{itm:approx intro.2}), $Y$ admits an algebraic approximation $\mf Y \to B$ over some complex space $0 \in B$.

\PreprintAndPublication{
Let $[X] \in \Def(X)$ and $[Y] \in \Def(Y)$ denote the miniversal deformation spaces of $X$ and $Y$, respectively, together with the miniversal deformations $\sX \to \Def(X)$ and $\sY \to \Def(Y)$.
We have $f_* \O_Y = \O_X$ since $X$ is normal, and $R^1 f_* \O_Y = 0$ since $X$ has rational singularities.
Thanks to~\cite[Prop.~11.4.1]{KM92} and the universal property of $\Def(Y)$, there exists a commutative diagram
\[ \xymatrix{
\mf Y \ar[r] \ar[d] & \sY \ar^\sF[r] \ar[d] & \sX \ar[d] \\
B \ar^-\mu[r] & \Def(Y) \ar^F[r] & \Def(X),
} \]
where the square on the left is a fibre product, $F[Y] = [X]$, and $\sF|_Y = f$.
Pick a sequence $(t_n) \subset B$ converging to $0$ such that $\mf Y_{t_n}$ is projective for all $n \in \N$. By continuity, the sequence $(s_n) \subset \Def(X)$ defined by $s_n = F(\mu(t_n))$ converges to $[X]$.
For $n \gg 0$ sufficiently large, the map $\mf Y_{t_n} \to \sX_{s_n}$ induced by $\sF$ is bimeromorphic since $f$ is. In particular, $\sX_{s_n}$ is Moishezon.
Furthermore $\sX_{s_n}$ is \kahler with rational singularities by Lemma~\ref{lem:cy deformation}.
Then $\sX_{s_n}$ is projective thanks to~\cite[Thm.~1.6]{Nam02}.
So $\sX \to \Def(X)$ is an algebraic approximation of $X$, as desired. \par}
{By~\cite[Prop.~11.4.1]{KM92}, we see that the miniversal deformation $\sX \to \Def(X)$ is an algebraic approximation.}
Now arguing exactly as before, using Lemma~\ref{lem:cy deformation} and openness of versality, we get that $\sX \to \Def(X)$ is in fact a strong algebraic approximation of $X$. \qed

\subsection{Proof of Theorem~\ref{thm:bim approx intro}}

Let $X$ be a (smooth) compact \kahler threefold with $\kappa(X) = 0$.
By~\cite[Thm.~1.1]{HP15}, there exists a bimeromorphic map $X \dashrightarrow X'$, where $X'$ is a normal \Q-factorial compact \kahler space with terminal singularities such that $K_{X'}$ is nef.
By~\cite[Thm.~1.1]{CHP15}, the divisor $K_{X'}$ is semiample, i.e.~some positive multiple $mK_{X'}$ is globally generated.
On the other hand, we have $h^0(X', mK_{X'}) \le 1$ since $\kappa(X') = \kappa(X) = 0$. Hence $mK_{X'} \sim 0$, in particular $X'$ is a \cy threefold with isolated canonical singularities.
By~(\ref{thm:approx intro}.\ref{itm:approx intro.2}), the miniversal locally trivial deformation of $X'$ is a strong algebraic approximation. \qed

\section{Fundamental groups of \kahler threefolds} \label{sec:kahler groups}

\subsection{Proof of Corollary~\ref{cor:kahler groups}}

Let $X$ be an arbitrary compact \kahler threefold. If $X$ is uniruled, consider the maximal rationally connected (MRC) quotient $\phi\!: X \dashrightarrow Z$ of $X$.
This is an almost holomorphic map with rationally connected general fibre onto a compact complex class $\cC$ manifold $Z$ of dimension $\le 2$.
Blowing up $X$ and $Z$, we may assume that $Z$ is \kahler and $\phi$ is a morphism. Then $\phi_*\!: \pi_1(X) \to \pi_1(Z)$ is an isomorphism thanks to~\cite[Thm.~5.2]{Kol93}.
The manifold $Z$, being of dimension $\le 2$, has an algebraic approximation. In particular, $\pi_1(Z)$ is a complex-projective group and then so is $\pi_1(X)$.

We may thus assume from now on that $X$ is not uniruled. In this case, we will need the following notion.

\begin{dfn}[Non-reduced locus]
Let $f\!: X \to Y$ be a surjective morphism of complex manifolds. We define the \emph{non-reduced locus of $f$} to be the subset
\[ N(f) := \big\{ y \in Y \;\big|\; \text{the fibre $f^{-1}(y)$ is everywhere non-reduced} \big\}. \]
We say that $f$ is \emph{reduced in codimension one} if $N(f)$ is contained in a codimension $\ge 2$ analytic subset $Z \subset Y$.
\end{dfn}

By~\cite[Thm.~1.1]{HP15} and~\cite[Thm.~1.1]{CHP15} there exists a bimeromorphic map $X \dashrightarrow X'$ such that $mK_{X'}$ is globally generated for some $m > 0$. Let $\phi_{|mK_{X'}|}\!: X' \to Z$ be the corresponding pluricanonical map. By construction $Z$ is a normal projective variety with $\dim Z = \kappa(X)$ and the general fibre $F'$ of $\phi_{|mK_{X'}|}$ is connected, with terminal singularities and torsion canonical divisor $K_{F'} \sim_\Q 0$.

We have an induced meromorphic map $\phi\!: X \dashrightarrow Z$. After blowing up $X$ and base changing to a suitable resolution $Z' \to Z$, we may assume that $\phi$ is holomorphic, $Z$ is smooth and the non-reduced locus of $\phi$ is contained in a simple normal crossings (snc) divisor $D = \sum_{i=1}^n D_i \subset Z$. The general fibre $F$ of $\phi$ is smooth and has Kodaira dimension $\kappa(F) = 0$.

\begin{clm} \label{clm:base change}
There is a finite surjective Galois morphism $p\!: \wt Z \to Z$ from a normal projective variety $\wt Z$ with klt singularities such that if $\wt X$ is the normalization of the main component of $X \x_Z \wt Z$, then $\wt\phi\!: \wt X \to \wt Z$ is reduced in codimension one.
\end{clm}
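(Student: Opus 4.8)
The plan is to remove the multiple fibres of $\phi$ in codimension one by a branched base change of Kawamata type, ramified along the $D_i$ to an order chosen to kill the multiplicities of \emph{all} fibre components. Since we only need reducedness in codimension one, it suffices to work at the generic point $\eta_i$ of each $D_i$. As $X$ is smooth and $\phi$ is a morphism, near $\eta_i$ we may write $\phi^* D_i = \sum_j a_{ij} \Gamma_{ij}$, where the $\Gamma_{ij}$ are the irreducible components of the fibre over $\eta_i$; the condition $D_i \subseteq N(\phi)$ means precisely that $a_{ij} \ge 2$ for every $j$. I would then set $m_i$ to be the least common multiple of the integers $a_{ij}$ (with $m_i = 1$ when $D_i \not\subseteq N(\phi)$).

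Next I would apply Kawamata's covering construction to the smooth projective variety $Z$ and the snc divisor $D = \sum_i D_i$. This produces a finite Galois morphism $p\!: \wt Z \to Z$ with $\wt Z$ normal projective and having at worst (abelian) quotient singularities, hence klt, such that $p^* D_i = m_i \wt D_i$ for $\wt D_i := (p^* D_i)_\red$ and every $i$. Making the cover Galois while prescribing the ramification orders may require enlarging $D$ by auxiliary very ample divisors, which does not affect the conclusion.

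I would then form the normalisation $\wt X$ of the main component of $X \x_Z \wt Z$ and analyse $\wt\phi\!: \wt X \to \wt Z$. Away from $p^{-1}(D)$ the map $p$ is \'etale and the fibres of $\phi$ are reduced (as $N(\phi) \subseteq \supp D$), so $\wt\phi$ is reduced there. Over the generic point of a given $\wt D_i$ I would use the transverse local model at the generic point of each $\Gamma_{ij}$, where $\phi$ is given by $t = (\mathrm{unit}) \cdot x^{a_{ij}}$ with $x = 0$ a local equation of $\Gamma_{ij}$ and $t = 0$ one of $D_i$. The base change $t = s^{m_i}$ followed by normalisation replaces the multiplicity $a_{ij}$ by $a_{ij} / \gcd(a_{ij}, m_i) = 1$, since $a_{ij} \mid m_i$. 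Thus every component of the fibre over the generic point of $\wt D_i$ becomes reduced and no new non-reduced fibres appear in codimension one, so $\wt\phi$ is reduced in codimension one.

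The main obstacle is the correct choice of $m_i$ together with the local reducedness computation. The subtle point is that a fibre may be everywhere non-reduced while the multiplicities of its components are coprime, so the naive ``multiplicity of a multiple fibre'' (the gcd of the $a_{ij}$) does \emph{not} suffice; one must ramify to a common multiple of all the $a_{ij}$, and then verify via the transverse model — which is legitimate since at a generic point of $\Gamma_{ij}$ only that single component meets the fibre — that the normalised base change is generically reduced along each $\wt D_i$. Checking at the same time that the cover can be taken Galois with only klt singularities is a secondary, more bookkeeping matter that the covering lemma takes care of.
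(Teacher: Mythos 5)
Your proposal is correct and follows essentially the same route as the paper: a ramified covering of $Z$ branched along the snc divisor $D$ (plus auxiliary ample divisors), with reducedness in codimension one verified at the generic points of the fibre components via the local model in which the equation of $D_i$ pulls back to $(\mathrm{unit})\cdot x^{a_{ij}}$ on $X$ and to $t^{m_i}$ on the cover, so that normalized base change replaces the multiplicity $a_{ij}$ by $a_{ij}/\gcd(a_{ij},m_i)$. The only (harmless) difference is that you ramify to the least common multiple of \emph{all} the $a_{ij}$ and make every component of the fibre reduced, whereas the paper exploits the definition of $N(\phi)$ --- fibres that are \emph{everywhere} non-reduced --- and uses a single cyclic cover of degree $m = m_1\cdots m_n$, with $m_i$ the multiplicity of just one chosen component $E_i \subset \phi^*D_i$ dominating $D_i$, to arrange that each relevant fibre acquires at least one reduced point; your argument therefore proves slightly more than is needed.
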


\begin{proof}[Proof of Claim~\ref{clm:base change}]
\PreprintAndPublication{
For every $1 \le i \le n$, pick a component $E_i \subset f^* D_i$ which surjects onto $D_i$, and let $m_i$ be its coefficient in $f^* D_i$. Put $m := m_1 \cdots m_n$.
Let $A$ be an ample divisor on $Z$ such that $mA - D$ is very ample, and pick a general element $H \in |mA - D|$.
Let $p\!: \wt Z \to Z$ be the ramified cyclic cover associated to $D + H$ as in~\cite[Def.~2.50]{KM98}.
The variety $\wt Z$ is normal, and by the ramification formula for finite maps
\[ K_{\wt Z} = p^* \left( K_Z + \frac{m-1}{m} \big( D + H \big) \right). \]
Since $D + H$ is a reduced snc divisor on $Z$, the pair $\big( Z, \frac{m-1}{m}(D + H) \big)$ is klt. Then so is $\wt Z$ by~\cite[Prop.~5.20]{KM98}.

To see that $\wt\phi$ is reduced in codimension one, fix an index $1 \le i \le n$ and a general point $P \in E_i$.
In suitable local coordinates $(x_i)$ on $X$ near $P$, $(s_i)$ on $Z$ near $\phi(P)$, and $(t_i)$ on $\wt Z$ near $\wt P := p^{-1}(\phi(P))$, the relevant data is given by
\begin{align*}
D_i & = \{ s_1 = 0 \} \subset Z, \\
\phi(x_1, \dots, x_3) & = \big( x_1^{m_i}, \phi_2(x_1, \dots, x_3) \big), \text{ and} \\
p(t_1, \dots, t_k) & = (t_1^m, t_2, \dots, t_k),
\end{align*}
where $k = \dim Z$. A calculation (whose details we omit here) now shows that $\wt\phi^{-1}(\wt P)$ contains at least one smooth point. This means that the irreducible divisor $p^{-1}(D_i)$ is not contained in $N(\wt\phi)$. A similar argument shows that also $p^{-1}(H) \not\subset N(\wt\phi)$. So $\wt\phi$ is reduced in codimension one.}
{This is a standard application of the ramified cyclic covering trick~\cite[Def.~2.50]{KM98} followed by a tedious but straightforward calculation in local coordinates, which we omit here.}
\end{proof}

The map $\wt\phi$ induces an exact sequence
\[ \pi_1(F) \lto \pi_1(\wt X) \lto \pi_1(\wt Z) \lto 1 \]
thanks to~\cite[Lemma~1.5.C]{Nor83}, and the image of $\pi_1(\wt X)$ in $\pi_1(X)$ is a normal subgroup of finite index by~\cite[Prop.~2.9.1]{Kol93}.
It remains to show that $\pi_1(F)$ and $\pi_1(\wt Z)$ are complex-projective groups.
If $\kappa(X) \ge 1$, then $\dim F \le 2$ and $\pi_1(F)$ is projective. If $\kappa(X) = 0$ then $F = X$, and $\pi_1(X)$ is projective by Corollary~\ref{cor:pi1}.

For $\pi_1(\wt Z)$, let $\wt Z' \to \wt Z$ be a projective resolution of $\wt Z$.
As $\wt Z$ is projective with klt singularities, $\pi_1(\wt Z) = \pi_1(\wt Z')$ is projective by~\cite[Cor.~1.1(1)]{Tak03}. \qed

\subsection{Proof of Corollary~\ref{cor:pi1}}

The proof proceeds in \PreprintAndPublication{four}{two} small steps.

\subsubsection*{Step 1: Projectivity of $\pi_1(X)$}

Let $X$ be a compact \kahler threefold with $\kappa(X) = 0$. By Theorem~\ref{thm:bim approx intro}, there exists a bimeromorphic modification $X \dashrightarrow X'$, where $X'$ is a compact \kahler space with terminal singularities such that $X'$ admits a locally trivial strong algebraic approximation $\mf X' \to B$.
For suitable $t \in B$, the fibre $X_t'$ will be a projective \cy threefold by Lemma~\ref{lem:cy deformation}.
Let $Y$ be a projective resolution of singularities of such an $X_t'$. Then $Y$ is a smooth projective threefold of Kodaira dimension zero, and we have a chain of isomorphisms
\begin{align*}
\pi_1(X) & \isom \pi_1(X') && \text{\cite[Cor.~1.1(1)]{Tak03}} \\
  & \isom \pi_1(X_t') && \text{Thom's first isotopy lemma, \cite[Prop.~11.1]{Mat12}} \\
  & \isom \pi_1(Y) && \text{\cite[Cor.~1.1(1)]{Tak03}.}
\end{align*}
In our situation, Thom's first isotopy lemma says that the deformation $\mf X' \to B$ is topologically trivial (possibly after shrinking $B$).

\PreprintAndPublication{
\subsubsection*{Step 2: Almost abelianity of  $\pi_1(X)$}

By~\cite[(4.17.3)]{Kol95} (see also~\cite[Thm.~1.3 and the proof of Cor.~1.4]{NS95}) the fundamental group of $Y$ has a finite index abelian subgroup $\Gamma$. Hence the same is true of $\pi_1(X)$.
There is a finite \'etale cover $\wt X \to X$ such that $\Gamma = \pi_1(\wt X) \subset \pi_1(X)$.
Taking the Galois closure of $\wt X \to X$, we may assume that $\Gamma \subset \pi_1(X)$ is a normal subgroup. Since $\rk(\Gamma)$ is the first Betti number of the compact \kahler manifold $\wt X$, it is even and we may set $r(X) := \rk(\Gamma)/2$.

\subsubsection*{Step 3: Well-definedness of $r(X)$}

Let $\Gamma_i \subset \pi_1(X)$, $i = 1, 2$, be two finite index abelian subgroups. They correspond to finite \'etale covers $X_i \to X$. The fundamental group of the fibre product $X_1 \x_X X_2 \to X$ is a subgroup $\Gamma' \subset \Gamma_1 \cap \Gamma_2$ of finite index in $\pi_1(X)$.
So $\rk(\Gamma_1) = \rk(\Gamma') = \rk(\Gamma_2)$. We see that $r(X)$ is independent of the choice of $\Gamma$.

\subsubsection*{Step 4: The augmented irregularity of $X$}

Let $\Gamma \subset \pi_1(X)$ be the finite index abelian subgroup corresponding to an \'etale cover $\wt X \to X$ with $q(\wt X) = \wt q(X)$. Then
\[ r(X) = \frac 12 \rk(\Gamma) = \frac 12 b_1(\wt X) = q(\wt X) = \wt q(X). \]
We have already noted in Remark~\ref{rem:wt q finite} that $0 \le \wt q(X) \le 3$.

The last statement of Corollary~\ref{cor:pi1} is clear since a finitely generated abelian group of rank zero is finite. \qed}
{\subsubsection*{Step 2: Remaining statements}

By~\cite[(4.17.3)]{Kol95} (see also~\cite[Thm.~1.3 and the proof of Cor.~1.4]{NS95}) the fundamental group of $Y$ has a finite index abelian subgroup $\Gamma$. Hence the same is true of $\pi_1(X)$.
The proof of the remaining statements is fairly standard, so we omit it. \qed}

\subsection{Proof of Corollary~\ref{cor:pi1 cy3}}

Passing to a crepant terminalization changes neither the fundamental group nor the holomorphic Euler characteristic. Hence we may assume that $X$ has terminal singularities.
By Theorem~\ref{thm:approx intro}, there is a locally trivial strong algebraic approximation $\mf X \to B$ of $X$.
The Euler characteristic $\chi(X_t, \O_{X_t})$ is locally constant on $B$ thanks to~\cite[Satz 3.iii)]{BPS80}. Since $\pi_1(X) = \pi_1(X_t)$ by Thom's first isotopy lemma again, we conclude by applying~\cite[Prop.~8.23]{BBdecomp} to a suitable projective fibre $X_t$. \qed \\

The following was claimed in the introduction.

\begin{lem} \label{lem:chi ne 0}
For a non-smooth, non-projective \cy threefold $X$ with $\omega_X \not\isom \O_X$, we have $\chi(X, \O_X) \ge 1$.
\end{lem}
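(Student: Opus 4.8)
The plan is to compute $\chi(X, \O_X) = \sum_{i=0}^3 (-1)^i h^i(X, \O_X)$ and to observe that each of the three hypotheses controls exactly one of the nontrivial terms. Since $X$ is connected and compact, $h^0(X, \O_X) = 1$. As $X$ has rational (indeed canonical) singularities, $H^1(X, \O_X) \isom H^1(\wt X, \O_{\wt X})$ for a resolution $\wt X \to X$, so $h^1(X, \O_X) = q(X)$ in the sense of Definition~\ref{dfn:irregularity}; and because $X$ is not smooth, Proposition~\ref{prp:q le 1} gives $q(X) \le 1$.

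Next I would dispose of the top term by showing $h^3(X, \O_X) = 0$. Rational singularities are Cohen--Macaulay, so Serre duality on $X$ yields $h^3(X, \O_X) = h^0(X, \omega_X)$. By abundance for \cy threefolds (Proposition~\ref{prp:cy3 abundance}) some reflexive power $\omega_X^{[m]}$ is trivial, i.e.\ $mK_X \sim 0$. If $\omega_X = \O_X(K_X)$ had a nonzero section, it would produce an effective divisor $D \sim K_X$; then $mD \sim 0$ with $D \ge 0$ forces $D = 0$ on the compact normal space $X$, hence $K_X \sim 0$ and $\omega_X \isom \O_X$, contrary to hypothesis. Therefore $h^0(X, \omega_X) = 0$.

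The crux is the middle term $h^2(X, \O_X)$, and this is exactly where non-projectivity enters. The key is to invoke Proposition~\ref{prp:Kodaira} in contrapositive form: a compact \kahler space with rational singularities and $H^2(X, \O_X) = 0$ is projective. Since $X$ is \cy, hence \kahler with rational singularities, and non-projective by assumption, it follows that $h^2(X, \O_X) \ge 1$. Combining the four estimates gives $\chi(X, \O_X) = 1 - q(X) + h^2(X, \O_X) - 0 \ge 1 - 1 + 1 = 1$, as claimed.

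I do not expect a serious obstacle here; the real content lies in attributing each hypothesis to its term (non-smoothness bounds $q(X)$, the failure of $\omega_X \isom \O_X$ kills $h^3$, and non-projectivity forces $h^2 \ge 1$). The one non-mechanical step is the bound $h^2(X, \O_X) \ge 1$: rather than trying to extract it from the fibre-bundle structure of the Albanese map (Theorem~\ref{thm:alb cy3}), where it would require a delicate analysis of the flat sheaves $R^q\alpha_*\O_X$ and is in fact false without projectivity, one should obtain it cleanly from the Kodaira-type criterion of Proposition~\ref{prp:Kodaira}. Indeed a \emph{projective} \cy threefold with $\omega_X \not\isom \O_X$ can have $\chi(X, \O_X) = 0$, so the non-projectivity hypothesis is essential and must be used precisely at this point.
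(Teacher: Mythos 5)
Your proposal is correct and follows essentially the same route as the paper: bound $q(X)\le 1$ via Proposition~\ref{prp:q le 1} using non-smoothness, get $h^2(X,\O_X)\ge 1$ from the contrapositive of Proposition~\ref{prp:Kodaira} using non-projectivity, and kill $h^3(X,\O_X)=h^0(X,\omega_X)$ using that $\omega_X$ is torsion but non-trivial. The only cosmetic difference is that you compute $h^3(X,\O_X)=h^0(X,\omega_X)$ by Serre duality directly on the Cohen--Macaulay space $X$, whereas the paper passes through a resolution $\wt X\to X$ and uses rational/canonical singularities to compare with $h^0(\wt X,\omega_{\wt X})$; both are valid.
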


\begin{proof}
We have $q(X) \le 1$ by Proposition~\ref{prp:q le 1} and $h^2(X, \O_X) \ge 1$ by Proposition~\ref{prp:Kodaira}.
For a projective resolution of singularities $\wt X \to X$,
\begin{align*}
h^3(X, \O_X) & = h^3 \big( \wt X, \O_{\wt X} \big) && \text{$X$ has rational singularities, \cite[Thm.~5.22]{KM98}} \\
  & = h^0 \big( \wt X, \omega_{\wt X} \big) && \text{Dolbeault isomorphism and conjugation on $\wt X$} \\
  & = h^0(X, \omega_X) && \text{$X$ has canonical singularities.}
\end{align*}
Now $h^0(X, \omega_X) = 0$ since $\omega_X$ is torsion but non-trivial. Hence
\[ \chi(X, \O_X) = 1 - q(X) + h^2(X, \O_X) - h^3(X, \O_X) \ge 1 - 1 + 1 - 0 = 1. \qedhere \]
\end{proof}

\PreprintAndPublication{
\section{Unobstructedness and smoothability} \label{sec:unobs}

\subsection{Proof of Corollary~\ref{cor:unobs}}

Let $X$ be a \cy threefold. If $X$ is projective with terminal singularities, then $\Def(X)$ is smooth by~\cite[Thm.~A]{Nam94}. Hence we may assume that $X$ is non-projective with isolated canonical singularities. Consider the index one cover $X_1 \to X$. By Theorem~\ref{thm:cy3 K=0}, $X_1$ is smooth. Then by Theorem~\ref{thm:approx crit quotient intro}, $\Deflt(X)$ is smooth. But $\Def(X) = \Deflt(X)$ by~\cite[Thms.~1 and 3]{Sch71}.

\subsection{Proof of Corollary~\ref{cor:smoothability}}

By Theorem~\ref{thm:cy3 K=0}, if $X$ is non-projective then it is smooth. The existence of a smoothing for projective $X$ is proved in~\cite[Thm.~1.3]{NS95}.

\subsection{Proof of Corollary~\ref{cor:quot cy}}

By the argument in the proof of~(\ref{thm:approx intro}.\ref{itm:approx intro.2}), the manifold $X$ satisfies the assumptions of Theorem~\ref{thm:approx crit quotient intro}. We conclude by applying~(\ref{thm:approx crit quotient intro}.\ref{itm:acqi.2}).
}{}

\newcommand{\etalchar}[1]{$^{#1}$}
\providecommand{\bysame}{\leavevmode\hbox to3em{\hrulefill}\thinspace}
\providecommand{\MR}{\relax\ifhmode\unskip\space\fi MR}
\providecommand{\MRhref}[2]{%
  \href{http://www.ams.org/mathscinet-getitem?mr=#1}{#2}
}
\providecommand{\href}[2]{#2}

\end{document}